\newtheorem{theorem}{Theorem}
\newtheorem{proposition}{Proposition}
\newtheorem{lemma}{Lemma}
\newtheorem{corollary}{Corollary}
\theoremstyle{definition}
\newtheorem{definition}{Definition}
\theoremstyle{remark}
\newtheorem{remark}{Remark}
\begin{document}

\title[New distribution spaces]{New distribution spaces associated to translation-invariant Banach spaces}

\author[P. Dimovski]{Pavel Dimovski}
\address{Faculty of Technology and Metallurgy, Ss. Cyril and Methodius University, Ruger Boskovic 16, 1000 Skopje, Macedonia}
\email{dimovski.pavel@gmail.com}

\author[S. Pilipovi\'{c}]{Stevan Pilipovi\'{c}}
\address{Department of Mathematics and Informatics, University of Novi Sad, Trg Dositeja Obradovi\'ca 4, 21000 Novi Sad, Serbia}
\email {stevan.pilipovic@dmi.uns.ac.rs}

\author[J. Vindas]{Jasson Vindas}
\address{Department of Mathematics, Ghent University, Krijgslaan 281 Gebouw S22, 9000 Gent, Belgium}
\email{jvindas@cage.Ugent.be}

\thanks{J. Vindas gratefully acknowledges support by Ghent University, through the BOF-grant number 01N010114.}

\subjclass[2010]{Primary 46F05, 46E10. Secondary 46H25}
\keywords{Schwartz distributions; Translation-invariant Banach spaces of tempered distributions; Convolution of distributions; $L^{p}$ and $\mathcal{D}_{L^{p}}'$ weighted spaces; Beurling algebras}

\begin{abstract}We introduce and study new distribution spaces, the test function space $\mathcal{D}_E$ and its strong dual $\mathcal{D}'_{E'_{\ast}}$. These spaces generalize the Schwartz spaces $\mathcal{D}_{L^{q}}$, $\mathcal{D}'_{L^{p}}$, $\mathcal{B}'$ and their weighted versions. The construction of our new distribution spaces is based on the analysis of a suitable translation-invariant Banach space of distributions $E$, which turns out to be a convolution module over a Beurling algebra $L^{1}_{\omega}$. The Banach space $E'_{\ast}$ stands for $L_{\check{\omega}}^1\ast E'$. We also study convolution and multiplicative products on $\mathcal{D}'_{E'_{\ast}}$.
\end{abstract}

\maketitle

%
%
%

\section{Introduction}

Translation-invariant spaces of functions and distributions are very important in mathematical analysis.
They are connected with many central questions in harmonic analysis \cite{beurling,domar56,domar89,reiter,wermer}. Particularly, the convolution operation makes them relevant for many branches of analysis such as the theory of PDE. The basic properties of the convolution of distributions were already established in the books by  Schwartz \cite{S} and Horv\'{a}th \cite{horvath}, but there is still a wide interest in the subject \cite{BO,O2004,O2010,O1990,O,wagner1987,wagner2013}.

This article introduces and studies new classes of translation-invariant distribution spaces, the test function space $\mathcal{D}_{E}$ and its dual, denoted as $\mathcal{D}'_{E'_{\ast}}$. The construction of these spaces is based upon the analysis of suitable translation-invariant Banach spaces of distributions $E$. 

The space $E$ is a natural extension of a large class of weighted $L^{p}$ spaces, while $\mathcal{D}'_{E'_{\ast}}$ generalizes the spaces $\mathcal D'_{L^p}$. The spaces $\mathcal D'_{L^p}$ were introduced by Schwartz \cite{S54,Schwartzvectorial,S} as a major tool in the study of convolution within distribution theory \cite{BO,O2010} and are still the subject of various modern investigations \cite{OW} (cf. also \cite{O1983}). Ortner and Wagner \cite{O1990} have considered weighted versions of the  $\mathcal D'_{L^p}$ spaces, which have proved usefulness in the analysis of convolution semigroups associated to many PDE \cite{O} and boundary values of harmonic functions \cite{alvarez2007}. It turns out that their spaces are also particular instances of our $\mathcal{D}'_{E'_{\ast}}$.

The present work focuses on the topological and convolution properties of $\mathcal{D}'_{E'_{\ast}}$. The new spaces could be very useful  in the analysis of boundary values of holomorphic functions in tube domains \cite{C-M} and solutions to the heat equation in the upper half-space \cite{M1990}. In fact, the authors have applied the results from this paper to make a thorough analysis of boundary values in the space $\mathcal{D}'_{E'_{\ast}}$, which leads to a series of new results even for the classical case $\mathcal{D}'_{E'_{\ast}}=\mathcal{D}'_{L^{p}}$. Such results will appear elsewhere in a forthcoming article \cite{d-p-vII}.

This paper is organized as follows. In Section \ref{translation-invariant} we study the class of  translation-invariant Banach spaces of tempered distributions on the Euclidean space $\mathbb{R}^{n}$. It should be mentioned that such Banach spaces have already been considered in one-dimension by Drozhzhinov and Zav'yalov in connection with Tauberian theorems and generalized Besov spaces \cite{DZ}. The class of Banach spaces in which we are interested are translation-invariant spaces $\mathcal{D}(\mathbb{R}^{n})\hookrightarrow E \hookrightarrow \mathcal{D}'(\mathbb{R}^{n})$ for which the translation group acts continuously and the growth function $\omega$ of its translation group (cf. Definition \ref{def:1}) is polynomially bounded. See Section \ref{translation-invariant} for precise definitions. We remark that the space $E$ is not necessarily a solid Banach space in the sense of \cite{f1979}; indeed, the elements of $E$ may not be regular distributions and actually $E$ needs not even be a module over $\mathcal{D}(\mathbb{R}^{n})$ under pointwise multiplication. Nevertheless, as we show, the space $E$ carries a natural Banach convolution module structure over the Beurling algebra $L^{1}_{\omega}$. Furthermore, it is shown that $E$ possesses bounded approximations of the unity for this module structure. Actually, this class of Banach spaces turns out to be characterized by the latter two properties (cf. Remark \ref{rkbeurlingconvolution}). We then study properties of its dual space. Inspired by various results on factorization of Banach and Fr\'{e}chet convolution algebras \cite{kisynski,Pet-v,r-s-t,voigt}, we introduce the Banach space $E'_{\ast}= L^{1}_{\check{\omega}}\ast E'$. The  convolution module structures of $E$ and $E'_{\ast}$ are crucial for achieving the main results of this article.

Our new distribution spaces are introduced in Section \ref{new spaces}. The test function space $\mathcal{D}_{E}$ consists of tempered distributions for which all partial derivatives belong to $E$. We first show that $\mathcal{D}_{E}$ is a Fr\'{e}chet space of smooth functions and actually the following inclusions between familiar test function spaces hold $\mathcal{S}(\mathbb{R}^{n})\hookrightarrow \mathcal{D}_{E}\hookrightarrow \mathcal O_C(\mathbb R^n)\hookrightarrow \mathcal{E}(\mathbb{R}^{n})$. The space $\mathcal D'_{E'_*}$ is defined as the strong dual of $\mathcal{D}_{E}$. It satisfies $\mathcal{E}'(\mathbb R^n)\hookrightarrow \mathcal{O}'_{C}(\mathbb{R}^{n})\rightarrow \mathcal {D}'_{E'_*}\hookrightarrow \mathcal{S}'(\mathbb{ R}^n)$, the second inclusion becomes dense when $E$ is reflexive. We study various structural and topological properties of $\mathcal{D}'_{E'_{\ast}}$ via the Schwartz parametrix method \cite{S}. In particular, it is proved that every $f\in\mathcal{D}'_{E'_{\ast}}$ is the finite sum of partial derivatives of elements of the Banach space $E'_{\ast}$. If $E$ is reflexive, we prove that $\mathcal{D}_{E}$ is an $FS^{\ast}$ space and $\mathcal{D}'_{E'_{\ast}}$ is a $DFS^{\ast}$ space \cite{komatsu67}, so that they are reflexive in this case. As examples we present in Section \ref{examples} the weighted spaces $\mathcal{D}'_{L^p_{\eta}}$, $1\leq p<\infty$, and the space of $\eta$-bounded distributions $\mathcal{B}'_{\eta}$, where $\eta$ is a polynomially bounded weight function. The article concludes with the study of convolution and multiplicative products on $\mathcal{D}'_{E'_{\ast}}$ in Section \ref{subsect convolution}.

\section{Notation}
\label{notation}

We use the standard notation from Schwartz distribution theory. In particular, if $K$ is a compact subset of $\mathbb{R}^{n}$, $\mathcal{D}_{K}$ is the Fr\'{e}chet space of smooth functions with ${\rm supp}\:\varphi\subseteq K$, provided with the family of seminorms

$$p_{j}(\varphi):=\underset{|\alpha|\leq j}{\sup_{x\in\mathbb{R}^{n}}}|\varphi^{(\alpha)}(x)|, \ \ \ j\in\mathbb{N}_{0},$$
where $\mathbb{N}_0=\mathbb{N}\cup\left\{0\right\}$. On the space $\mathcal{S}(\mathbb{R}^n)$, we use the family of norms

$$q_j(\varphi)=\underset{|\alpha|\leq j}{\sup_{x\in \mathbb{R}^n}}(1+|x|)^j|\varphi^{(\alpha)}(x)|, \ \ \ j\in\mathbb{N}_{0},$$
where $|x|$ denotes the Euclidean norm of $x$. The function $\check{g}$ denotes the reflection $\check{g}(x)=g(-x)$. Given $h\in\mathbb{R}^{n}$, we employ the notation $T_{h}$ for the translation operator, that is, $(T_{h}g)(x)=g(x+h)$. Naturally, the translation and reflection operations are well-defined for distributions as well. A subspace $Y\subset\mathcal{D}'(\mathbb{R}^{n})$ is called translation-invariant if $T_{h}(Y)= Y$ for all $h\in\mathbb{R}^{n}$.

The symbol $``\hookrightarrow"$ in the expression $X\hookrightarrow Y$ means dense and continuous linear embedding between topological vector spaces, while $``\hookrightarrow"$ in the expression $X\to Y$ just means a continuous embedding. Dual spaces $X'$ are always equipped with the strong dual topology.  Finally, if $Z\subseteq X'$, then $\sigma(X,Z)$ stands for the weak topology induced by $Z$ on $X$; in particular, $\sigma(X',X)$ is the weak$^\ast$ topology.

\section{On a class of translation-invariant Banach spaces}
\label{translation-invariant}

We are interested in the class of Banach spaces $E$ of distributions that satisfy the following three properties:
\begin{itemize}
    \item[ $(a)'$] $\mathcal{D}(\mathbb{R}^n)\hookrightarrow E\hookrightarrow \mathcal{D}'(\mathbb{R}^n)$.
    \item[$(b)'$] $T_h: E\to E$ for every $h\in\mathbb{R}^{n}$ (i.e., $E$ is translation-invariant).
		
        \item[$(c)'$] For any $g\in E$, there are $M=M_g>0$ and $\tau=\tau_{g}\geq 0$ such that
       $$
         \|T_{h} g\|_E\leq M(1+|h|)^\tau, \ \ \ \mbox{for all }  h\in \mathbb{R}^n.
       $$
\end{itemize}

We shall call any such Banach space satisfying the conditions $(a)'$, $(b)'$, and $(c)'$ a \emph{translation-invariant Banach space of tempered distributions}. It should be pointed out that $(a)'$ and $(b)'$ automatically imply that every translation operator $T_{h}:E\to E$ is continuous, as readily follows from the closed graph theorem. Our first important result tells us that $(a)'$, $(b)'$, and $(c)'$ may always be replaced by stronger conditions.

\begin{theorem}\label{pro:ime}
Let $E$ be a translation-invariant Banach space of tempered distributions. The following properties hold:
    \begin{itemize}
        \item [$(a)$] $\mathcal{S}(\mathbb{R}^n)\hookrightarrow E\hookrightarrow \mathcal{S}'(\mathbb{R}^n)$.
            \item[$(b)$] The mappings $\mathbb{R}^n\to E$ given by $h\mapsto T_hg$ are continuous for each $g\in E$.
        \item [$(c)$] There are absolute constants $M>0$ and $\tau\geq 0$ such that
         \[
    \|T_{h}g\|_E\leq M\|g\|_E (1+|h|)^\tau, \ \ \ \mbox{for all } g\in E \mbox{ and } h\in \mathbb{R}^n.
    \]
    \end{itemize}
\end{theorem}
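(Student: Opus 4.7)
I would prove the three assertions in the order $(c)$, $(a)$, $(b)$, since $(c)$ is the most delicate and provides the main engine for the remaining steps.

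For $(c)$, the natural approach is Baire category. Define, for $N\in\mathbb{N}$,
\[
V_N=\{g\in E:\|T_hg\|_E\leq N(1+|h|)^N\text{ for all }h\in\mathbb{R}^n\}.
\]
Each $V_N$ is convex and balanced, and it is closed because by $(b)'$ every $T_h$ is a bounded operator on $E$, so $g_j\to g$ in $E$ implies $T_hg_j\to T_hg$ in $E$. Hypothesis $(c)'$ gives $E=\bigcup_N V_N$. Baire's theorem then furnishes some $V_{N_0}$ with non-empty interior, and its convex--balanced structure upgrades this to a ball $B(0,\varepsilon)\subset V_{N_0}$; scaling yields $\|T_hg\|_E\leq(N_0/\varepsilon)\|g\|_E(1+|h|)^{N_0}$, which is $(c)$.

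With $(c)$ in hand, I would prove $(a)$ via a partition-of-unity argument in both directions. Fix $\chi\in\mathcal{D}(\mathbb{R}^n)$ with $\sum_{k\in\mathbb{Z}^n}\chi(\cdot-k)\equiv 1$, set $\chi_k=\chi(\cdot-k)$ and $K=\operatorname{supp}\chi$, and note the key identity $\chi_k\varphi=T_{-k}(\chi\cdot T_k\varphi)$. For $\mathcal{S}\hookrightarrow E$, combine $(c)$ with the continuous embedding $\mathcal{D}_K\hookrightarrow E$ (from $(a)'$) and the Leibniz rule to obtain
\[
\|\chi_k\varphi\|_E\leq M(1+|k|)^\tau\|\chi\cdot T_k\varphi\|_E\leq C'q_j(\varphi)(1+|k|)^{\tau-j},
\]
the decay factor coming from the rapid decrease of $\varphi^{(\alpha)}$ on the translate $k+K$. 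For $j>\tau+n$ the series $\sum_k\chi_k\varphi$ converges absolutely in $E$; its limit equals $\varphi$ because $E\hookrightarrow\mathcal{D}'$ and the partial sums already converge pointwise (hence in $\mathcal{D}'$) to $\varphi$. Density is inherited from the chain $\mathcal{D}\subset\mathcal{S}\subset E$. For the dual inclusion $E\hookrightarrow\mathcal{S}'$, I would first apply the closed graph theorem to the map $\mathcal{D}_K\to E'$, $\psi\mapsto(g\mapsto\langle g,\psi\rangle)$; its graph is closed because pointwise convergence of a sequence in $E'$ is compatible with distributional convergence via $E\hookrightarrow\mathcal{D}'$. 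This yields an estimate $|\langle g,\psi\rangle|\leq\tilde{C}_K\|g\|_E p_{\tilde{N}_K}(\psi)$ on $\mathcal{D}_K$. Then for $\varphi\in\mathcal{D}$ the identity
\[
\langle g,\chi_k\varphi\rangle=\langle T_kg,\chi\cdot T_k\varphi\rangle,
\]
combined with $(c)$ and the preceding estimate, gives $|\langle g,\varphi\rangle|\leq C''\|g\|_E q_j(\varphi)$ for $j>\tau+n$. Density of $\mathcal{D}$ in $\mathcal{S}$ extends $g$ to a tempered distribution and the bound shows the embedding is continuous; density of $E$ in $\mathcal{S}'$ is again inherited from $\mathcal{D}$.

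Finally, $(b)$ follows easily from $(c)$ and the density of $\mathcal{D}$ in $E$: for $g\in E$ and $\varepsilon>0$, pick $\psi\in\mathcal{D}$ with $\|g-\psi\|_E<\varepsilon$, so that for $|h|\leq 1$
\[
\|T_hg-g\|_E\leq\|T_h(g-\psi)\|_E+\|T_h\psi-\psi\|_E+\|\psi-g\|_E\leq (M\cdot 2^\tau+1)\varepsilon+\|T_h\psi-\psi\|_E,
\]
and the last term tends to $0$ as $h\to 0$ because $T_h\psi\to\psi$ in some $\mathcal{D}_{K'}$ and hence in $E$. This gives continuity at $h=0$, from which continuity at any $h_0$ follows from $T_{h_0+k}g-T_{h_0}g=T_{h_0}(T_kg-g)$ and $(c)$. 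The main obstacle in the entire argument is the Baire step for $(c)$: one must verify carefully that the sets $V_N$ are closed, a fact which ultimately rests on $(b)'$, and that they exhaust $E$, which uses $(c)'$. Once this uniform polynomial bound is available, $(a)$ and $(b)$ reduce respectively to partition-of-unity bookkeeping and a routine density argument.
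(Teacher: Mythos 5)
Your proposal is correct and follows essentially the same route as the paper: a Baire category argument for $(c)$ (the paper uses the sets $E_{j,\nu}$ and a triangle-inequality trick to recenter the ball, while you invoke convexity and balancedness, and you rightly make explicit the closedness of the $V_N$, which rests on $(b)'$), the same partition-of-unity decomposition $\varphi=\sum_k \chi_k\varphi$ with the estimate from $(c)$ for both inclusions in $(a)$, and the standard density argument for $(b)$. The only noteworthy variation is in the inclusion $E\hookrightarrow\mathcal{S}'(\mathbb{R}^n)$, where the paper passes through Schwartz's characterization of $\mathcal{S}'$ via the polynomial growth of $g\ast\varphi$ and then checks weak boundedness of the unit ball, whereas you derive the uniform bound $|\langle g,\varphi\rangle|\leq C\|g\|_E\, q_j(\varphi)$ directly on $\mathcal{D}(\mathbb{R}^n)$ and extend by density --- a slightly more streamlined packaging of the same partition-of-unity estimate.
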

\begin{proof}
Let us first prove $(c)$. Consider the following sets,
$$E_{j,\nu}=\{g\in E\,\,:\, \|T_{h}g\|_E\leq j (1+|h|)^\nu\mbox \,\,\mbox{for all }\mbox{   } h \in \mathbb{R}^n\}, \ \ \ j,\nu\in \mathbb{N}.$$
Because of $(c)'$, we have $E=\bigcup_{j,\nu\in \mathbb{N}}E_{j,\nu}$. Baire's theorem implies that one of the sets $E_{j_{0},\nu_{0}}$ contains a ball $\{f\in E:\: \|f-u\|_E<r\}$. If $g\in E$ is such that $\|g\|_E<r$, then

$$\|T_{h}g\|_E\leq\|T_{h}g+T_{h}u\|_E+\|T_{h}u\|_E\leq 2j_0(1+|h|)^{\nu_0}$$ for all $h \in \mathbb{R}^n$. So, for arbitrary $g\in E$, we get
$\|T_{h}g\|_E<(4 j_0/r) (1+|h|)^{\nu_0}\|g\|_E.$

The property $(b)$ follows easily from $(a)'$, $(b)'$ and $(c)$.

Let us now show $(a)$.  We first prove the embedding $\mathcal{S}(\mathbb{R}^{n})\hookrightarrow E$. Since $\mathcal D(\mathbb R^n)\hookrightarrow \mathcal S(\mathbb R^n),$  it is enough to prove that
$\mathcal{S}(\mathbb{R}^{n})\subset E$ and the continuity of the inclusion mapping. Let $\varphi\in \mathcal{S}(\mathbb{R}^{n})$. We use a special partition of unity:
$$
1=\sum_{m\in\mathbb{Z}^{n}} \psi(x-m), \ \ \ \psi\in\mathcal{D}_{[-1,1]^{n}} .
$$
Hence, we get the representation $\varphi(x)=\sum_{m\in\mathbb{Z}^{n}}\psi(x-m)\varphi(x)$. We estimate each term in this sum. Because of $(c)$,
\begin{equation}
\label{eq:1.1}
\|\varphi\: T_{-m}\psi\|_E\leq
\frac{M}{(1+|m|)^{n+1}}\|(1+|m|)^{n+\tau+1}\psi\:T_{m}\varphi\|_E ,
\end{equation}
where $|m|$ denotes Euclidean norm. We first prove that the multi-indexed sequence
$\left\{\rho_m\right\}_{m\in\mathbb{Z}^{n}}$ is bounded in $\mathcal D_{[-1,1]^{n}}$, where
\begin{equation}
\label{eq:1.2}
\rho_{m}=(1+|m|)^{n+\tau+1}\psi\:T_{m}\varphi.
\end{equation}
In fact, for any $j>n+\tau+1$, we have
\begin{equation}
\label{eq:1.3}
p_j(\rho_m)\leq p_{j}(\psi) \max_{|\alpha|\leq j}\sup_{|y-m|\leq 1}(1+|m|)^{j}|\varphi^{(j)}(y)|\leq M_{1} q_{j}(\varphi).
\end{equation}
By the assumption $(a)'$, the mapping $\mathcal{D}_{[-1,1]^{n}}\rightarrow E$ is continuous. So, there are $M_2>0$ and $j\in \mathbb{N}_0$, such that $\|\phi\|_E\leq M_2p_{j}(\phi)$, for every $\phi\in \mathcal{D}_{[-1,1]^n}$. We may assume that $j>n+\tau+1$. Therefore, by (\ref{eq:1.3}),

\begin{equation}
\label{eq:1.4}\|\rho_m\|_E\leq M_{1}M_{2}q_{j}(\varphi),  \   \  \  \mbox{for all } m\in\mathbb{Z}^{n}.
\end{equation}
Next, let $F(r)$ be the lattice counting function of points with integer coordinates inside the $n$-dimensional Euclidean closed ball of radius $r$. It is well known that $F(r)$ has asymptotics
$$F(r)=\sum_{m_{1}^{2}+\dots+m_{n}^{2}\leq r^{2}}1\sim \frac{\pi^{n/2}r^{n}}{\Gamma(n/2+1)}\:, \ \ \ r\to\infty.$$
In view of (\ref{eq:1.1}), (\ref{eq:1.2}), and (\ref{eq:1.4}), we obtain
\begin{equation}
\label{eq:1.5}
\|\sum_{N'<|m|\leq N}\varphi\: T_{-m}\psi\|_E
\leq M_{3}q_{j}(\varphi) \int_{N'}^{N}\frac{dF(r)}{(1+r)^{n+1}}\leq \frac{M_{4}q_{j}(\varphi)}{N'+1}
\end{equation}
and thus $\left\{\sum_{|m|\leq N} \varphi T_{-m}\psi\right\}_{N=0}^{\infty}$ is a Cauchy sequence in $E$  whose limit is $\varphi\in E.$ Taking $N'=0$ and $N\to\infty$ in (\ref{eq:1.5}), we get
$
\|\varphi\|_E
\leq M_{4}q_{j}(\varphi),$ for all $\varphi\in\mathcal{S}(\mathbb{R}^{n})$. The continuity of $\mathcal{S}(\mathbb{R}^{n})\hookrightarrow E$ has been established.

We now address  $E\subset\mathcal{S}'(\mathbb{R}^{n})$ and the continuity of the inclusion mapping. Let $g\in E$. Due to Schwartz' characterization of $\mathcal{S}'(\mathbb{R}^{n})$ \cite[Thm. VI, p. 239]{S}: $g$ belongs to $\mathcal{S}'(\mathbb{R}^{n})$ if and only if $g\ast \varphi$ is a function of at most polynomial growth for each $\varphi\in\mathcal{D}(\mathbb{R}^{n})$. Let $B$ be a bounded set in $\mathcal{D}(\mathbb{R}^{n})$. The embedding $E\hookrightarrow\mathcal{D}'(\mathbb{R}^{n})$ yields the existence of a constant $M_{5}=M_{5}(B)$ such that $|\left\langle g,\check{\phi}\right\rangle|\leq M_{5}||g||_{E}$ for all $ g\in E $ and $\phi\in B$. Therefore, by $(c)$,
\begin{equation}\label{eq:2}
|(g\ast \phi)(h)|\leq M_{5}||T_{h}g||_{E}\leq M_{5}M||g||_{E}(1+|h|)^{\tau},
\end{equation}
for all  $g\in E$, $\phi\in B,$ and  $h\in\mathbb{R}^{n}.$
This shows $E\subset \mathcal{S}'(\mathbb{R}^{n})$. The continuity of the inclusion mapping would follow if we show that the unit ball of $E$ is weakly bounded in $\mathcal{S}'(\mathbb{R}^{n})$. Fix $\varphi\in \mathcal{S}(\mathbb{R}^{n})$ and write again $\varphi(x)=\sum_{m\in\mathbb{Z}^{n}}\psi(x-m)\varphi(x)$, where $\psi$ is the partition of the unity used above. We use $\rho_{m}\in\mathcal{D}_{[-1,1]^n}$ as in (\ref{eq:1.2}). Taking (\ref{eq:2}) into account and the fact that $B=\left\{\check{\rho}_{m}:\: m\in\mathbb{Z}^{n}\right\}$ is a bounded subset of $\mathcal{D}(\mathbb{R}^{n})$ (cf. (\ref{eq:1.3})), we obtain, for all $g\in E$,
\begin{align*}
|\left\langle g,\varphi\right\rangle|&\leq \lim_{N\to\infty}\sum_{|m|\leq N}\frac{ |(g\ast\check{ \rho}_{m})(m)|}{(1+|m|)^{n+\tau+1}}
\\
&
\leq M_{5}M||g||_{E}\lim_{N\to\infty}\sum_{|m|\leq N}\frac{1}{(1+|m|)^{n+1}}\leq M_{6}||g||_{E}.
\end{align*}
Finally, the density of $E$ in $\mathcal{S}'(\mathbb{R}^{n})$ follows from the dense inclusion $\mathcal{S}(\mathbb{R}^{n})\hookrightarrow E$. The proof of $(a)$ is complete.
\end{proof}

Observe that condition $(c)$ gives us the order of growth in $h$ of the norms $||T_{h}||_{L(E)}$, where as usual $L(E)$ is the Banach algebra of continuous linear operators on $E$.

\begin{definition}\label{def:1} Let $E$ be a translation-invariant Banach space of tempered distributions. The growth function of the translation group is defined as
$$
\omega(h):= ||T_{-h}||_{L(E)}.
$$

\end{definition}

\smallskip

From now on, we shall \emph{always} assume that $E$ is a translation-invariant Banach space of tempered distributions with growth function $\omega$. It is clear that $\omega$ is measurable, $\omega(0)=1$, the function $\log \omega$ is subadditive, and, by $(c)$, it satisfies the estimate
\begin{equation}
\label{eqw}
\omega(h)\leq M (1+|h|)^{\tau}, \  \  \ h\in\mathbb{R}^{n}.
\end{equation}

We now study various properties of $E$. We start with the convolution.

\begin{lemma}\label{lemma:ime2}
The convolution mapping $(\varphi,\psi)\in \mathcal{S}(\mathbb{R}^n)\times \mathcal{S}(\mathbb{R}^n)\rightarrow \varphi\ast\psi \in \mathcal{S}(\mathbb{R}^n) $ extends to a continuous bilinear mapping $\mathcal{S}(\mathbb{R}^n)\times E\rightarrow E$. Furthermore, the following estimate holds
\begin{equation}\label{1}
\|\varphi\ast g\|_E\leq \|g\|_E\int_{\mathbb{R}^n}|\varphi(x)|\:\omega(x)dx.
\end{equation}
\end{lemma}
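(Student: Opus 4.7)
The plan is to realise $\varphi\ast g$ as a vector-valued (Bochner) integral in $E$,
\[
\varphi\ast g := \int_{\mathbb{R}^n} \varphi(y)\, T_{-y}g \, dy,
\]
and then to verify three things: that the integrand is Bochner integrable, that the stated norm estimate is a direct consequence of the triangle inequality for Bochner integrals, and that on the dense subspace $\mathcal{S}(\mathbb{R}^n)\times \mathcal{S}(\mathbb{R}^n)$ this definition agrees with the classical pointwise convolution.

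For integrability, property $(b)$ of Theorem \ref{pro:ime} tells us that the mapping $y\mapsto T_{-y}g$ is continuous from $\mathbb{R}^n$ into $E$, so $y\mapsto\varphi(y)\,T_{-y}g$ is continuous and hence strongly measurable. Property $(c)$, together with the very definition of $\omega$, gives the pointwise domination
\[
\|\varphi(y)\, T_{-y}g\|_E \le |\varphi(y)|\,\omega(y)\,\|g\|_E,
\]
and since $\omega$ is polynomially bounded by (\ref{eqw}) and $\varphi\in\mathcal{S}(\mathbb{R}^n)$, the scalar majorant $|\varphi|\omega$ lies in $L^{1}(\mathbb{R}^n)$. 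The Bochner integral therefore exists in $E$, and the vector-valued triangle inequality immediately yields the required estimate. Continuity of the bilinear map $\mathcal{S}(\mathbb{R}^n)\times E\to E$ is then automatic, since $\varphi\mapsto \int |\varphi|\,\omega$ is dominated by a continuous seminorm on $\mathcal{S}(\mathbb{R}^n)$.

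To identify the Bochner integral with the standard convolution, fix $\psi\in\mathcal{S}(\mathbb{R}^n)$; the embedding $E\hookrightarrow\mathcal{S}'(\mathbb{R}^n)$ given by Theorem \ref{pro:ime}$(a)$ makes $g\mapsto\langle g,\psi\rangle$ a continuous linear functional on $E$, so it commutes with the Bochner integral and
\[
\langle \varphi\ast g,\psi\rangle = \int_{\mathbb{R}^n}\varphi(y)\,\langle T_{-y}g,\psi\rangle\,dy = \int_{\mathbb{R}^n}\varphi(y)\,\langle g,T_y\psi\rangle\,dy.
\]
Writing $\check\varphi\ast\psi$ at the Schwartz level as $\int\varphi(y)\,T_y\psi\,dy$ (a routine change of variables) recognises the right-hand side as $\langle g,\check\varphi\ast\psi\rangle$, i.e.\ as the distributional convolution $\varphi\ast g$ for $g\in\mathcal{S}'(\mathbb{R}^n)$. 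In particular, when $g\in\mathcal{S}(\mathbb{R}^n)$ this coincides with the ordinary pointwise convolution, so our definition does extend it.

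The only delicate point is this consistency check: once the Bochner machinery has been justified by $(b)$ and $(c)$, one must confirm that the $E$-valued integral represents the same distribution as the classical convolution. This reduces to pulling a continuous linear functional through the integral and invoking a standard Fubini-type identity, so there is no real obstacle beyond bookkeeping.
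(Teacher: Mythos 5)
Your proof is correct, but it takes a different route from the paper's. You define $\varphi\ast g$ for arbitrary $g\in E$ directly as a Bochner integral $\int_{\mathbb{R}^n}\varphi(y)\,T_{-y}g\,dy$, using property $(b)$ for strong measurability and the bound $\|T_{-y}g\|_E\leq\omega(y)\|g\|_E$ together with (\ref{eqw}) for integrability; the estimate (\ref{1}) is then the vector-valued triangle inequality, and the identification with the distributional convolution is done by pairing with $\psi\in\mathcal{S}(\mathbb{R}^n)$ through the embedding $E\hookrightarrow\mathcal{S}'(\mathbb{R}^n)$. The paper instead stays at the level of test functions: for $\varphi,\psi\in\mathcal{D}(\mathbb{R}^n)$ it regards $\varphi\ast\psi$ as an $\mathcal{S}(\mathbb{R}^n)$-valued Riemann integral, transfers the convergence of the Riemann sums to $E$ via the embedding $\mathcal{S}(\mathbb{R}^n)\hookrightarrow E$, obtains (\ref{1}) in the limit, and only then extends to $\mathcal{S}(\mathbb{R}^n)\times E$ by density. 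The paper's argument is more elementary (no Bochner theory, and it does not even need property $(b)$ for this lemma, only the embedding and $(c)$), whereas yours yields an explicit integral representation of $\varphi\ast g$ for every $g\in E$ and makes the consistency with the distributional convolution transparent without a separate density-extension step; the measurability and separability checks you perform (continuity of $y\mapsto T_{-y}g$ plus the Pettis criterion) are exactly the price of that directness, and you have handled them correctly.
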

\begin{proof}
Given $\varphi, \psi \in \mathcal{D}(\mathbb{R}^{n})$ we can view $(\varphi\ast \psi)(x)=\int_{\rm{supp}\:\varphi}\varphi(t)\psi(x-t)dt$ as an integral in the Fr\'{e}chet space $\mathcal{S}(\mathbb{R}^n)$. Since $\mathcal{S}(\mathbb{R}^n)\hookrightarrow E$, the Riemann sums of this integral converge to $\varphi*\psi$ in the Banach space $E$. We have $\|\sum_j(t_{j+1}-t_{j})\varphi(t_j)T_{-t_{j}}\psi\|_{E}\leq \|\psi\|_E\sum_j(t_{j+1}-t_{j})|\varphi(t_j)|\:\omega(t_{j})$. Passing to the limit of the Riemann sums, we obtain $\|\varphi*\psi\|_E\leq \|\psi\|_E\int_{\mathbb{R}^n}|\varphi(t)|\:\omega(t)dt$. By using a standard density argument, one obtains the desired extension and (\ref{1}).
\end{proof}
The convolution of elements of $E$ can actually be performed with more general functions. Let $L^{1}_{\omega}$ be the Beurling algebra \cite{beurling,reiter} with weight $\omega$, i.e., the Banach algebra of measurable functions $u$ such that $||u||_{1,\omega}:=\int_{\mathbb{R}^{n}}|u(x)| \:\omega(x) dx<\infty$. Since $\mathcal{S}(\mathbb{R}^{n})$ is dense in this Beurling algebra, we obtain the ensuing proposition, a corollary of Lemma \ref{lemma:ime2}.

\begin{proposition}
\label{cor:Beurling Algebra}
The convolution extends to a mapping  $\ast:L^{1}_{\omega}\times E\rightarrow E$ and $E$ becomes a Banach module over the Beurling algebra $L^{1}_{\omega}$, i.e., $ ||u\ast g||_{E}\leq ||u||_{1,\omega}||g||_{E}$.
\end{proposition}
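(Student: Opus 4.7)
The plan is to upgrade Lemma~\ref{lemma:ime2} from $\mathcal{S}(\mathbb{R}^{n})$ to $L^{1}_{\omega}$ by a standard density and continuity argument, and then to verify the module axioms by passing to the limit. The density of $\mathcal{S}(\mathbb{R}^{n})$ in $L^{1}_{\omega}$ (which is a standard fact for Beurling algebras with measurable, subadditive, polynomially-bounded weight, and which is already invoked in the statement) is the pivot that makes everything work, since the estimate \eqref{1} established in Lemma~\ref{lemma:ime2} is of exactly the right form to guarantee a unique continuous extension.

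Concretely, first I would fix $g\in E$ and $u\in L^{1}_{\omega}$ and choose a sequence $\{\varphi_{k}\}\subset \mathcal{S}(\mathbb{R}^{n})$ with $\varphi_{k}\to u$ in $L^{1}_{\omega}$. By \eqref{1} applied to $\varphi_{k}-\varphi_{j}$, the sequence $\{\varphi_{k}\ast g\}$ is Cauchy in $E$, so it converges to some element which I take as the definition of $u\ast g$. Standard reasoning shows independence of the approximating sequence, and letting $k\to\infty$ in $\|\varphi_{k}\ast g\|_{E}\leq \|\varphi_{k}\|_{1,\omega}\|g\|_{E}$ yields the desired inequality $\|u\ast g\|_{E}\leq \|u\|_{1,\omega}\|g\|_{E}$. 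Bilinearity is inherited from the $\mathcal{S}\times E$ level.

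Next I would verify the module axiom $(u\ast v)\ast g = u\ast (v\ast g)$ for $u,v\in L^{1}_{\omega}$, $g\in E$. Approximating $u$ and $v$ by Schwartz functions $\varphi_{k},\psi_{k}$ in $L^{1}_{\omega}$, the identity is classical for $\varphi_{k},\psi_{k}\in\mathcal{S}(\mathbb{R}^{n})$; I then pass to the limit in $E$, using the continuity of both $L^{1}_{\omega}\times L^{1}_{\omega}\to L^{1}_{\omega}$ (the Beurling algebra structure) and the estimate just established. This delivers the Banach $L^{1}_{\omega}$-module structure.

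The only genuinely delicate point is checking that this extension coincides with the convolution understood in the sense of distributions, so that the notation $u\ast g$ is unambiguous. For this I would test against $\varphi\in\mathcal{D}(\mathbb{R}^{n})$: since $E\hookrightarrow \mathcal{S}'(\mathbb{R}^{n})$ by Theorem~\ref{pro:ime}(a), the convergence $\varphi_{k}\ast g\to u\ast g$ in $E$ implies convergence in $\mathcal{S}'(\mathbb{R}^{n})$, while, on the other side, $\langle \varphi_{k}\ast g,\varphi\rangle=\langle g,\check{\varphi}_{k}\ast \varphi\rangle\to \langle g,\check{u}\ast\varphi\rangle$ because $\check{\varphi}_{k}\ast\varphi\to \check{u}\ast\varphi$ in $\mathcal{S}(\mathbb{R}^{n})$ (in fact already in a fixed $\mathcal{D}_{K}$ after smoothing, using $\varphi\in\mathcal{D}$) and $g\in\mathcal{S}'$. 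Thus $u\ast g$, as defined, agrees with the distributional convolution, completing the proof.
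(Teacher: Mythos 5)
Your proof is correct and follows essentially the same route as the paper, which obtains the proposition as an immediate corollary of Lemma~\ref{lemma:ime2} (the estimate (\ref{1})) together with the density of $\mathcal{S}(\mathbb{R}^{n})$ in $L^{1}_{\omega}$. The consistency check you add at the end goes beyond what the paper records; the only detail to fix there is that $\check{\varphi}_{k}\ast\varphi$ need not converge in a fixed $\mathcal{D}_{K}$ (nor in $\mathcal{S}(\mathbb{R}^{n})$, since $\omega$ only controls a fixed polynomial power), but the pairing still converges because $|\langle T_{-t}g,\varphi\rangle|\leq M_{\varphi}\|g\|_{E}\,\omega(t)$, so that $\int(\varphi_{k}-u)(t)\langle T_{-t}g,\varphi\rangle\,dt\to 0$ directly from $\varphi_{k}\to u$ in $L^{1}_{\omega}$.
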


We shall denote this extension simply by $u*g=g*u$ whenever $u \in L^{1}_{\omega}$ and $g\in E$.
We call  $L^{1}_{\omega}$ the \emph{associated Beurling algebra} to $E$.

Lemma \ref{lemma:ime2} also allows us to consider approximations in $E$ by smoothing with test functions.
\begin{corollary}\label{molifaer}
    Let $g\in E$ and $\varphi\in \mathcal{S}(\mathbb{R}^n)$. Set $ \varphi_{\varepsilon}(x)=\varepsilon^{-n}\varphi \left(x/ \varepsilon \right)$.  Then, $$ \lim_{\varepsilon\to 0^+} \|cg-\varphi_\varepsilon*g\|_E=0,$$ where $c=\int_{\mathbb{R}^n}\varphi(x)dx$.
\end{corollary}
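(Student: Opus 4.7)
The plan is to express the difference as a vector-valued integral in $E$ and apply dominated convergence. Specifically, I would first derive the identity
$$
\varphi_\varepsilon\ast g-cg=\int_{\mathbb{R}^n}\varphi(y)(T_{-\varepsilon y}g-g)\,dy,
$$
where the right-hand side is to be interpreted as an $E$-valued Bochner integral, and then bound its norm by
$$
\|\varphi_\varepsilon\ast g-cg\|_E\leq \int_{\mathbb{R}^n}|\varphi(y)|\,\|T_{-\varepsilon y}g-g\|_E\,dy.
$$

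To justify the identity I would proceed in two stages. For $\varphi,g\in\mathcal{D}(\mathbb{R}^n)$ it reduces to an elementary pointwise computation after the change of variable $y\mapsto\varepsilon y$. The extension to $\varphi\in\mathcal{S}(\mathbb{R}^n)$ and $g\in E$ is obtained by approximating the integral by Riemann sums of translates $\varphi(y_j)T_{-\varepsilon y_j}g$, just as was done in the proof of Lemma \ref{lemma:ime2}, making use of the density of $\mathcal{S}(\mathbb{R}^n)$ in $E$, the contraction estimate from Proposition \ref{cor:Beurling Algebra}, and the continuity of the translation map supplied by part (b) of Theorem \ref{pro:ime}.

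Once the integral representation is in hand, I would verify the hypotheses of dominated convergence. Property (b) of Theorem \ref{pro:ime} yields $\|T_{-\varepsilon y}g-g\|_E\to 0$ as $\varepsilon\to 0^+$ for each fixed $y\in\mathbb{R}^n$. For an integrable majorant, property (c) together with the polynomial bound \eqref{eqw} gives
$$
|\varphi(y)|\,\|T_{-\varepsilon y}g-g\|_E\leq |\varphi(y)|\bigl(\omega(\varepsilon y)+1\bigr)\|g\|_E\leq |\varphi(y)|\bigl(M(1+|y|)^\tau+1\bigr)\|g\|_E,
$$
which is valid for all $\varepsilon\in(0,1]$; and the right-hand side is integrable on $\mathbb{R}^n$ since $\varphi\in\mathcal{S}(\mathbb{R}^n)$ decays faster than any polynomial. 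The desired conclusion then follows immediately.

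The principal obstacle lies in the rigorous justification of the integral identity for an arbitrary $g\in E$: since $g$ need not be a regular distribution, the pointwise manipulations available when $g\in\mathcal{S}(\mathbb{R}^n)$ are not at one's disposal, and one must instead carry out the Riemann-sum approximation in the Banach space $E$ itself, leaning on the module estimate of Proposition \ref{cor:Beurling Algebra} and the density $\mathcal{S}(\mathbb{R}^n)\hookrightarrow E$ of Theorem \ref{pro:ime}.
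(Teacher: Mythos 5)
Your argument is correct, and it reaches the conclusion by a somewhat different route than the paper. Both proofs rest on the same identity $\varphi_\varepsilon\ast g-cg=\int_{\mathbb{R}^n}\varphi(y)(T_{-\varepsilon y}g-g)\,dy$, but you pass to the limit in one stroke via dominated convergence with the integrable majorant $|\varphi(y)|\bigl(M(1+|y|)^{\tau}+1\bigr)\|g\|_E$, whereas the paper splits the argument in two: for $\varphi\in\mathcal{D}(\mathbb{R}^n)$ it uses the cruder bound $\sup_{t\in\operatorname{supp}\varphi}\|g-T_{-\varepsilon t}g\|_E\int|\varphi(t)|\,dt$, which suffices because the support is compact, and then treats general $\varphi\in\mathcal{S}(\mathbb{R}^n)$ by approximating $\varphi$ with $\psi_j\in\mathcal{D}(\mathbb{R}^n)$ and combining \eqref{1} with \eqref{eqw} to get $\|(\psi_j)_\varepsilon\ast g-\varphi_\varepsilon\ast g\|_E\leq M\|g\|_E\int(1+|x|)^\tau|\psi_j(x)-\varphi(x)|\,dx$ uniformly in $\varepsilon<1$. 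A consequence of the paper's ordering is that it only ever needs the $E$-valued integral representation for $g\in\mathcal{S}(\mathbb{R}^n)$ and compactly supported $\varphi$; the resulting scalar inequality is then extended to $g\in E$ by density. Your version instead requires the Bochner-integral identity for arbitrary $g\in E$ and non-compactly supported $\varphi$ --- the obstacle you correctly flag --- but it does go through: $y\mapsto T_{-y}g$ is continuous by Theorem \ref{pro:ime}(b) and norm-dominated by $M(1+|y|)^\tau\|g\|_E$, so the integral exists, and it agrees with the extension constructed in Lemma \ref{lemma:ime2} by approximating $g$ in $E$ by Schwartz functions and using the module bound of Proposition \ref{cor:Beurling Algebra}. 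The two approaches are of comparable length; yours trades the paper's two-stage approximation for a one-shot dominated-convergence argument at the cost of a slightly heavier vector-integration step.
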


\begin{proof}

We first consider the case when $\varphi\in \mathcal{D}(\mathbb{R}^n)$. As in the proof of Lemma \ref{lemma:ime2}, for $g\in \mathcal{S}(\mathbb{R}^{n})$ we can view $g\ast\varphi_{\varepsilon}= \int_{\mathbb{R}^{n}}(T_{-y}g)\: \varphi_\varepsilon(y)dy$, an $E$-valued integral. Thus, if $\varepsilon<1$,
\begin{align*}
\|cg-\varphi_\varepsilon*g\|&=\left\|\int_{\mathbb{R}^n} (g-T_{-y}g)\frac{1}{\varepsilon^n}\varphi\left(\frac{y}{\varepsilon}\right)dy\right\|_E
\\
&\leq \sup_{t\in\mathrm{supp}\:\varphi }\left\|g-T_{-\varepsilon t}g\right\| _E \int_{\rm {supp\varphi}}\left|\varphi(t) \right|dt.
\end{align*}
Due to the density of $\mathcal{S}(\mathbb{R}^{n})\hookrightarrow E$, the above inequality remains true for $g\in E$. Hence, in view of condition $(b)$, this gives the result when $g\in E$ and $\varphi\in\mathcal{D}(\mathbb{R}^{n})$.
In the general case, let $\varphi\in \mathcal{S}(\mathbb{R}^n)$ and let $\{\psi_{j}\}_{j=1}^\infty\in \mathcal{D}(\mathbb{R}^{n})$ be a sequence such that $\psi_{j}\rightarrow\varphi$ in $\mathcal{S}(\mathbb{R}^{n})$.
By Lemma \ref{lemma:ime2} and (\ref{eqw}), we have for $\varepsilon <1$,
\begin{equation*}
\|(\psi_j)_\varepsilon*g-\varphi_\varepsilon*g\|_E
\leq M\|g\|_E\int_{\mathbb{R}^n}(1+|x|)^\tau|\psi_j(x)-\varphi(x)|dx,
\end{equation*}
whence the result follows because $\int_{\mathbb{R}^{n}}\psi_{j}(x)dx\to c$. \end{proof}

We now study the dual space of $E$.
\begin{proposition}\label{prop3.2}
The space $E'$ satisfies

 \begin{itemize}
        \item [$(a)''$] $\mathcal{S}(\mathbb{R}^n)\to E'\hookrightarrow \mathcal{S}'(\mathbb{R}^n)$.
        \item [$(b)''$] The mappings $\mathbb{R}^n\to E'$ given by $h\mapsto T_hf$ are continuous for the weak$^{\ast}$ topology.
    \end{itemize}
Moreover, the property $(c)$ from Theorem \ref{pro:ime} holds true when $E$ is replaced by $E'$.
\end{proposition}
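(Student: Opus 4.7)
The plan is to deduce all three assertions directly from the corresponding properties of $E$ provided by Theorem \ref{pro:ime}, using only routine duality arguments; no machinery beyond the barrelledness of $\mathcal{S}(\mathbb{R}^{n})$ is needed.

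First I would handle the embeddings in $(a)''$. For $E'\hookrightarrow\mathcal{S}'(\mathbb{R}^{n})$, the restriction of $f\in E'$ to $\mathcal{S}(\mathbb{R}^{n})\subset E$ satisfies $|\langle f,\varphi\rangle|\leq\|f\|_{E'}\|\varphi\|_{E}$, so it lies in $\mathcal{S}'(\mathbb{R}^{n})$ with continuous dependence on $f$, and the density will follow once we know $\mathcal{S}(\mathbb{R}^{n})\subset E'$, since $\mathcal{S}(\mathbb{R}^{n})$ is dense in $\mathcal{S}'(\mathbb{R}^{n})$. For the embedding $\mathcal{S}(\mathbb{R}^{n})\to E'$, I would identify $\varphi\in\mathcal{S}(\mathbb{R}^{n})$ with the functional $g\mapsto\langle g,\varphi\rangle$ on $E\subset\mathcal{S}'(\mathbb{R}^{n})$. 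The key observation is that $E\hookrightarrow\mathcal{S}'(\mathbb{R}^{n})$ forces the unit ball of $E$ to be bounded in $\mathcal{S}'(\mathbb{R}^{n})$, hence equicontinuous by the barrelledness of $\mathcal{S}(\mathbb{R}^{n})$; this yields $j\in\mathbb{N}_{0}$ and $C>0$ with $|\langle g,\varphi\rangle|\leq Cq_{j}(\varphi)$ uniformly over $\|g\|_{E}\leq 1$, which is exactly the required estimate $\|\varphi\|_{E'}\leq Cq_{j}(\varphi)$. Both maps are compatible with the single distributional pairing, so one obtains the chain $\mathcal{S}(\mathbb{R}^{n})\subseteq E'\subseteq\mathcal{S}'(\mathbb{R}^{n})$ inside $\mathcal{S}'(\mathbb{R}^{n})$.

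Next, $(b)''$ should be immediate from the identity $\langle T_{h}f,g\rangle=\langle f,T_{-h}g\rangle$ together with Theorem \ref{pro:ime}$(b)$, which ensures $h\mapsto T_{-h}g$ is continuous $\mathbb{R}^{n}\to E$ for each fixed $g\in E$. For the analogue of $(c)$ on $E'$, I would pass to the adjoint:
\[
\|T_{h}f\|_{E'}=\sup_{\|g\|_{E}\leq 1}|\langle f,T_{-h}g\rangle|\leq \|f\|_{E'}\,\|T_{-h}\|_{L(E)}\leq M\|f\|_{E'}(1+|h|)^{\tau},
\]
by Theorem \ref{pro:ime}$(c)$ applied to the translation operator on $E$, which transfers the bound to $E'$ with the same constants $M$ and $\tau$. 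I do not anticipate any genuine obstacle; the only minor point requiring care is to verify that the two embeddings in $(a)''$ are consistent with the single distributional pairing so that the stated chain genuinely sits inside $\mathcal{S}'(\mathbb{R}^{n})$, which is immediate from how the embeddings are defined.
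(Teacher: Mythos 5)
Your proof is correct and follows essentially the same route as the paper: both deduce $(a)''$ by transposing the dense embeddings from Theorem \ref{pro:ime}$(a)$, obtain $(b)''$ from the identity $\langle T_hf,g\rangle=\langle f,T_{-h}g\rangle$ together with property $(b)$ of $E$, and get the norm bound by dualizing $\|T_{-h}\|_{L(E)}\leq M(1+|h|)^{\tau}$. The only difference is cosmetic: you make explicit the equicontinuity/barrelledness argument behind $\mathcal{S}(\mathbb{R}^n)\to E'$, which the paper leaves implicit in the phrase ``it follows from $(a)$''.
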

\begin{proof}
It follows from $(a)$ that $\mathcal{S}(\mathbb{R}^n)\to E'\hookrightarrow \mathcal{S}'(\mathbb{R}^n)$. Given $f\in E'$ and $\varphi \in \mathcal{S}(\mathbb{R}^n)$,
$$
|\langle T_{h}f,\varphi\rangle|=| \langle f, T_{-h}\varphi\rangle|\leq \omega(h)||f||_{E'}||\varphi||_{E}\leq M\|f\|_{E'}\|\varphi\|_E(1+|h|)^{\tau}.
$$
Since $\mathcal{S}(\mathbb{R}^n)$ is dense in $E$, $T_{h}f\in E'$ and
$\|T_{h}f\|_{E'}\leq  M\|f\|_{E'}(1+|h|)^\tau.
$
By (b) applied to $E$,
$\lim_{h\rightarrow h_0}\left\langle T_{h}f-T_{h_{0}}f,g\right\rangle=\left\langle f,\lim_{h\rightarrow h_0}(T_{-h}g-T_{-h_{0}}g)\right\rangle=0,$ for each $g\in E$.

\end{proof}

We can also associate a Beurling algebra to $E'$. Set
$$\check{\omega}(h):=||T_{-h}||_{L(E')}=||T^{\top}_{h}||_{L(E')}=\omega(-h).$$

The associated Beurling algebra to the dual space $E'$ is $L^{1}_{\check{\omega}}$. We define the convolution $u\ast f=f\ast u$ of $f\in E'$ and $u\in L^{1}_{\check{\omega}}$ via transposition:
\begin{equation}
\label{eq:convolution}
\left\langle u\ast f,g \right\rangle:= \left\langle f,\check {u}\ast g\right\rangle, \ \ \ g\in E.
\end{equation}
In view of Proposition \ref{cor:Beurling Algebra}, this convolution is well-defined because $\check {u}\in L^{1}_{\omega}$.

\begin{corollary} \label{cor3.3} We have $ ||u\ast f||_{E'}\leq ||u||_{1,\check{\omega}}||f||_{E'}$ and thus $E'$ is a Banach module over the Beurling algebra $L^{1}_{\check{\omega}}$. In addition, if $\varphi_{\varepsilon}$ and $c$ are as in Corollary \ref{molifaer}, then $\varphi_{\varepsilon}\ast f \to c f$ as $\varepsilon \to 0$ weakly$^{\ast}$ in $E'$ for each fixed $f\in E'$.
\end{corollary}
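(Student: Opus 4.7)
The plan is to reduce everything to results already established for $E$ by unwinding the duality definition
$$\langle u\ast f,g\rangle = \langle f,\check u\ast g\rangle, \qquad g\in E,$$
from (\ref{eq:convolution}). No new analysis is required; the argument is essentially bookkeeping with the reflection operation.

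For the norm estimate I begin by noting the trivial change of variables $\|\check u\|_{1,\omega}=\|u\|_{1,\check\omega}$. Then, combining the duality formula with the module inequality of Proposition \ref{cor:Beurling Algebra} applied to $\check u\in L^{1}_\omega$, I get for every $g\in E$
$$|\langle u\ast f,g\rangle| \le \|f\|_{E'}\,\|\check u\ast g\|_E \le \|f\|_{E'}\,\|\check u\|_{1,\omega}\,\|g\|_E = \|u\|_{1,\check\omega}\,\|f\|_{E'}\,\|g\|_E,$$
so taking the supremum over the unit ball of $E$ yields $\|u\ast f\|_{E'}\le\|u\|_{1,\check\omega}\|f\|_{E'}$. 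Bilinearity being obvious, the remaining module axiom is associativity $u_1\ast(u_2\ast f)=(u_1\ast u_2)\ast f$. Pairing both sides with an arbitrary $g\in E$ and applying (\ref{eq:convolution}) twice, this reduces to the identity $(u_1\ast u_2)^{\vee}\ast g=\check u_2\ast(\check u_1\ast g)$ in $E$, which follows from the associativity of the $L^{1}_\omega$-module structure on $E$ (Proposition \ref{cor:Beurling Algebra}) together with the elementary $(u_1\ast u_2)^{\vee}=\check u_1\ast\check u_2=\check u_2\ast\check u_1$.

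For the weak$^\ast$ convergence I fix $g\in E$ and again transfer the mollification onto $g$ via duality, using the easy identity $\check\varphi_\varepsilon=(\check\varphi)_\varepsilon$:
$$\langle \varphi_\varepsilon\ast f,\,g\rangle = \langle f,\,(\check\varphi)_\varepsilon\ast g\rangle.$$
Since $\check\varphi\in\mathcal S(\mathbb R^n)$ and $\int\check\varphi(x)\,dx=c$, Corollary \ref{molifaer} applied to $\check\varphi$ gives $(\check\varphi)_\varepsilon\ast g\to cg$ in the norm of $E$, and the continuity of $f$ on $E$ then forces $\langle \varphi_\varepsilon\ast f,g\rangle\to c\langle f,g\rangle$, which is the required weak$^\ast$ convergence.

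I do not foresee any genuine obstacle: the analytic substance is packaged in Proposition \ref{cor:Beurling Algebra} and Corollary \ref{molifaer}, and the rest is transposition. The only mild subtlety is that only weak$^\ast$ (rather than norm) convergence can be asserted, which is natural because by Proposition \ref{prop3.2} the translation group on $E'$ is only weak$^\ast$ continuous in general; a norm-convergence version would require the translation group on $E'$ to be strongly continuous, which fails outside the reflexive setting.
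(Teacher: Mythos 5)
Your argument is correct and is exactly the transposition argument the paper intends: the corollary is stated without proof there precisely because it follows by dualizing Proposition \ref{cor:Beurling Algebra} and Corollary \ref{molifaer} through the defining identity $\langle u\ast f,g\rangle=\langle f,\check{u}\ast g\rangle$, together with the elementary facts $\|\check{u}\|_{1,\omega}=\|u\|_{1,\check{\omega}}$ and $(\varphi_{\varepsilon})^{\vee}=(\check{\varphi})_{\varepsilon}$. Your closing observation that only weak$^{\ast}$ convergence can be expected in general is also consistent with the paper's later discussion of $E'_{\ast}$ as the subspace where norm convergence does hold.
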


In general the embedding $\mathcal{S}(\mathbb{R}^{n})\to E'$ is not dense (consider for instance $E= L^{1}$). However, $E'$ inheres the three properties $(a)$, $(b)$, and $(c)$ whenever $E$ is reflexive.

\begin{proposition} \label{prop3.3} If $E$ is reflexive, then its dual space $E'$ is also a  translation-invariant Banach space of tempered distributions.
\end{proposition}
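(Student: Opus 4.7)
The plan is to verify the three defining conditions $(a)'$, $(b)'$, $(c)'$ for $E'$. The conditions $(b)'$ and $(c)'$ are already supplied by Proposition~\ref{prop3.2}, which produces the uniform bound $\|T_h f\|_{E'}\leq M(1+|h|)^\tau\|f\|_{E'}$; this implies both that each $T_h:E'\to E'$ is a continuous linear operator and the pointwise growth estimate $(c)'$. Proposition~\ref{prop3.2} also supplies the continuous (but possibly non-dense) inclusions $\mathcal{S}(\mathbb{R}^n)\to E'\hookrightarrow \mathcal{S}'(\mathbb{R}^n)$, whence by composing with $\mathcal{D}(\mathbb{R}^n)\hookrightarrow\mathcal{S}(\mathbb{R}^n)$ and $\mathcal{S}'(\mathbb{R}^n)\hookrightarrow\mathcal{D}'(\mathbb{R}^n)$ the continuous inclusions $\mathcal{D}(\mathbb{R}^n)\to E'\to\mathcal{D}'(\mathbb{R}^n)$ follow for free. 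Density of $E'$ in $\mathcal{D}'(\mathbb{R}^n)$ is automatic, since $\mathcal{D}(\mathbb{R}^n)\subset\mathcal{S}(\mathbb{R}^n)\subset E'$ and $\mathcal{D}(\mathbb{R}^n)$ is already dense in $\mathcal{D}'(\mathbb{R}^n)$.

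The single non-trivial step is therefore to prove that $\mathcal{D}(\mathbb{R}^n)$ is dense in $E'$, and since $\mathcal{D}(\mathbb{R}^n)$ is dense in $\mathcal{S}(\mathbb{R}^n)$ in the Schwartz topology and $\mathcal{S}(\mathbb{R}^n)\to E'$ is continuous, it suffices to show that $\mathcal{S}(\mathbb{R}^n)$ is dense in $E'$. This is where the reflexivity of $E$ enters, through a Hahn--Banach argument. If $\mathcal{S}(\mathbb{R}^n)$ were not dense in $E'$, there would exist a non-zero $\Phi\in(E')'$ vanishing on $\mathcal{S}(\mathbb{R}^n)\subset E'$. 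Reflexivity allows us to identify $\Phi$ with an element of $E$, and the definition of the inclusion $\mathcal{S}(\mathbb{R}^n)\to E'$ through the distributional pairing reads $\langle \Phi,\varphi\rangle_{\mathcal{S}',\mathcal{S}}=\Phi(\varphi)=0$ for every $\varphi\in\mathcal{S}(\mathbb{R}^n)$. Because $E\hookrightarrow\mathcal{S}'(\mathbb{R}^n)$, this forces $\Phi=0$ as a tempered distribution, hence as an element of $E$, a contradiction.

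I expect the Hahn--Banach step to be the only genuine content of the proof; the main care required is to match the two pairings (the canonical bidual pairing $\langle\cdot,\cdot\rangle_{E,E'}$ and the distributional pairing used to embed $\mathcal{S}(\mathbb{R}^n)$ into $E'$) so that they reduce to the same quantity on $\Phi\times\mathcal{S}(\mathbb{R}^n)$. Everything else is bookkeeping. A small but important point is that outside the reflexive setting, density of $\mathcal{S}(\mathbb{R}^n)$ in $E'$ may fail, as is already noted in the text for $E=L^1$; reflexivity is therefore essential and not cosmetic.
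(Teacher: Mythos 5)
Your proof is correct and takes essentially the same route as the paper: both reduce the claim, via Proposition \ref{prop3.2}, to showing that $\mathcal{S}(\mathbb{R}^n)$ is dense in $E'$, and both settle that by a Hahn--Banach argument using the identification $E''=E$ together with the embedding $E\hookrightarrow\mathcal{S}'(\mathbb{R}^n)$ to conclude that a functional on $E'$ vanishing on $\mathcal{S}(\mathbb{R}^n)$ must be zero. The paper states this more tersely; your explicit matching of the bidual pairing with the distributional pairing is precisely the point it uses implicitly.
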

\begin{proof} By Proposition \ref{prop3.2}, it is enough to see that $\mathcal{S}(\mathbb{R}^{n})$ is dense in $E'$ (the property $(b)'$ trivially holds for $E'$). But if $g\in E''=E$ is such that $\left\langle g,\varphi\right\rangle=0$ for all $\varphi\in\mathcal{S}(\mathbb{R}^{n})$, then the property $(a)$ of $E$ implies that $g=0$. 
Thus, $\mathcal{S}(\mathbb{R}^{n})$ is dense in $E'$.
\end{proof}

The fact that property $(b)$ fails for $E'$ in the non-reflexive case ($E=L^{1}$ is again an example) causes various difficulties when dealing with this space. We will often work with a certain closed subspace of $E'$ rather than with $E'$ itself. We denote the linear span of a set $A$ as $\operatorname*{span} (A)$.

\begin{definition}\label{goodspace} The Banach space $E'_{\ast}$ is defined as $E'_{\ast}=L^{1}_{\check{\omega}}\ast E'$.
\end{definition}

 That $E'_{\ast}$ is a closed linear subspace of $E'$ is a non-trivial fact. It follows from the celebrated Cohen-Hewitt factorization theorem \cite{kisynski}, which asserts in this case the equality
$L^{1}_{\check{\omega}}\ast E'=\overline{\operatorname*{span}}(L^{1}_{\check{\omega}}\ast E')$ because the Beurling algebra $L^{1}_{\check{\omega}}$ possesses a bounded approximation unity (e.g., $\{\varphi_{\varepsilon}\}_{\varepsilon\in (0,1)}$ such that $c=1$ with the notation of Corollary \ref{molifaer}). The space $E'_{\ast}$ will be of crucial importance throughout the rest of this work. It possesses richer properties than $E'$ with respect to the translation group, as stated in the next theorem. The proof of the ensuing result makes use of an important property of the Fr\'{e}chet algebra $\mathcal{S}(\mathbb{R}^{n})$. Miyazaki \cite[Lem. 1, p. 529]{Miyazaki} (cf. \cite{Pet-v,voigt})) has shown the factorization theorem $\mathcal{S}(\mathbb{R}^{n})=\mathcal{S}(\mathbb{R}^{n})\ast \mathcal{S}(\mathbb{R}^{n})$ (the related result $\mathcal{D}=\operatorname*{span}(\mathcal{D}\ast\mathcal{D})$ has been proved in \cite{r-s-t}).

\begin{theorem}
\label{thgoodspace} The space $E'_{\ast}$ has the properties $(a)''$, $(b)$, and $(c)$. It is a Banach module over the Beurling algebra $L^{1}_{\check{\omega}}$. If $\varphi_{\varepsilon}$ and $c$ are as in Corollary \ref{molifaer}, then, for each $f\in E'_{\ast}$,
\begin{equation}
\label{eqapprox}
\lim_{\varepsilon\to0^{+}}\|cf-\varphi_{\varepsilon}\ast f\|_{E'}=0.
\end{equation}
Furthermore, if $E$ is reflexive, then $E'_{\ast}=E'$.
\end{theorem}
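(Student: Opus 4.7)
The plan is to transfer each desired property of $E'_{\ast}$ from properties of the Beurling algebra $L^{1}_{\check{\omega}}$ via the defining factorization $E'_{\ast}=L^{1}_{\check{\omega}}\ast E'$. The key structural observations I would use throughout are the associativity $(u\ast v)\ast g=u\ast(v\ast g)$ for $u,v\in L^{1}_{\check{\omega}}$ and $g\in E'$ (verified by testing against $\mathcal{S}(\mathbb{R}^{n})$ and using the density $\mathcal{S}(\mathbb{R}^{n})\hookrightarrow E$), together with the fact that translations commute with convolution.

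The module structure and property $(c)$ then come almost for free: for $u\in L^{1}_{\check{\omega}}$ and $f=v\ast g\in E'_{\ast}$, associativity gives $u\ast f=(u\ast v)\ast g\in L^{1}_{\check{\omega}}\ast E'=E'_{\ast}$, with the norm estimate inherited from Corollary \ref{cor3.3}; while $(c)$ for $E'_{\ast}$ is simply the restriction of $(c)$ for $E'$ (Proposition \ref{prop3.2}) to a clearly translation-invariant subspace, since $T_{h}(v\ast g)=(T_{h}v)\ast g\in E'_{\ast}$. For $(a)''$, the embedding $E'_{\ast}\hookrightarrow\mathcal{S}'(\mathbb{R}^{n})$ is inherited from $E'$; for the embedding $\mathcal{S}(\mathbb{R}^{n})\to E'_{\ast}$ I would invoke Miyazaki's factorization $\mathcal{S}(\mathbb{R}^{n})=\mathcal{S}(\mathbb{R}^{n})\ast\mathcal{S}(\mathbb{R}^{n})$ mentioned just before the theorem: writing $\varphi=\varphi_{1}\ast\varphi_{2}$, one has $\varphi_{1}\in L^{1}_{\check{\omega}}$ by the polynomial bound (\ref{eqw}) and $\varphi_{2}\in E'$ by Proposition \ref{prop3.2}, whence $\varphi\in E'_{\ast}$; the continuity of the inclusion then follows from the closed graph theorem since $E'_{\ast}\hookrightarrow\mathcal{S}'(\mathbb{R}^{n})$.

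The main obstacle is property $(b)$, the strong continuity of translations on $E'_{\ast}$; this is precisely what fails on $E'$ in the non-reflexive case and is the motivation for introducing $E'_{\ast}$. Writing $f=v\ast g$ with $v\in L^{1}_{\check{\omega}}$, $g\in E'$, the identity
$$T_{h}f-T_{h_{0}}f=(T_{h}v-T_{h_{0}}v)\ast g$$
reduces the question to the strong continuity of translations in the Banach algebra $L^{1}_{\check{\omega}}$. That in turn is a classical fact for Beurling algebras that I would prove by approximating $v$ by compactly supported continuous functions (dense in $L^{1}_{\check{\omega}}$ since $\check{\omega}$ is locally bounded by (\ref{eqw})), for which translation continuity in $L^{1}_{\check{\omega}}$ is immediate by dominated convergence, the uniform domination being provided by the submultiplicativity $\check{\omega}(x+h)\leq\omega(h)\check{\omega}(x)$ on a neighbourhood of $h_{0}$. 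The approximation identity (\ref{eqapprox}) is obtained by the same device: $cf-\varphi_{\varepsilon}\ast f=(cv-\varphi_{\varepsilon}\ast v)\ast g$, and the standard approximate-unit argument in the Beurling algebra gives $\varphi_{\varepsilon}\ast v\to cv$ in $L^{1}_{\check{\omega}}$.

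For the final assertion, when $E$ is reflexive Proposition \ref{prop3.3} tells us $E'$ itself satisfies $(a),(b),(c)$, so Corollary \ref{molifaer} applies directly to $E'$: choosing $\{\varphi_{\varepsilon}\}$ with $c=1$ yields $\varphi_{\varepsilon}\ast f\to f$ in the $E'$-norm for every $f\in E'$. Since each $\varphi_{\varepsilon}\ast f\in L^{1}_{\check{\omega}}\ast E'$ and this subspace is norm-closed by Cohen-Hewitt, we obtain $f\in E'_{\ast}$ and hence $E'=E'_{\ast}$.
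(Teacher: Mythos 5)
Your proposal is correct and follows essentially the same route as the paper: $(a)''$ via Miyazaki's factorization $\mathcal{S}(\mathbb{R}^{n})=\mathcal{S}(\mathbb{R}^{n})\ast\mathcal{S}(\mathbb{R}^{n})$, $(c)$ by restriction from $E'$, and $(b)$ together with (\ref{eqapprox}) by transferring the question to $L^{1}_{\check{\omega}}$ through the estimate $\|T_{h}(v\ast g)-v\ast g\|_{E'}\leq\|T_{h}v-v\|_{1,\check{\omega}}\|g\|_{E'}$. The final step also matches the paper's (which uses density of $\mathcal{S}(\mathbb{R}^{n})$ in $E'$ from Proposition \ref{prop3.3} plus closedness of $E'_{\ast}$); your extra details (closed graph for the continuity of $\mathcal{S}(\mathbb{R}^{n})\to E'_{\ast}$, the dominated-convergence proof of translation continuity in the Beurling algebra) merely flesh out what the paper leaves implicit.
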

\begin{proof}
For $(a)''$,
$\mathcal{S}(\mathbb{R}^{n})=\mathcal{S}(\mathbb{R}^{n})\ast \mathcal{S}(\mathbb{R}^{n})\subset L^{1}_{\check{\omega}}\ast E' = E'_{\ast},$
 whence the assertion follows. The property $(c)$ for $E'_{\ast}$ directly follows from Proposition \ref{prop3.2}. Observe that $\|T_{h}(u\ast f)- u\ast f\|_{E'}\leq \| f \|_{E'} \| T_{h}u-u \|_{1,\check{\omega}} \to 0$ as $h\to 0$ for $u\in L^{1}_{\check{\omega}}$ and $f\in E'$. The property (b) on $E'_{\ast}$ then follows. Likewise, the approximation property (\ref{eqapprox}) is easily established. Finally, if $E$ is reflexive we have, by Proposition \ref{prop3.3}, that $\mathcal{S}(\mathbb{R}^{n})$ is dense in $E'$; since $\mathcal{S}(\mathbb{R}^{n})\subset E'_{\ast}$ and  $E'_{\ast}$ is closed, we must have $E'_{\ast}=E'$.
\end{proof}

We point out factorization properties of the Banach modules $E$ and $E'_{\ast}$ which also follow from the Cohen-Hewitt factorization theorem.

\begin{proposition}\label{factorization} The factorizations $E=L^{1}_{\omega}\ast E$ and $E'_{\ast}= L^{1}_{\check{\omega}} \ast E'_{\ast}$ hold.
\end{proposition}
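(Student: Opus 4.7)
The plan is to derive both factorizations as direct applications of the Cohen--Hewitt factorization theorem, in the same way this theorem was already used to show that $E'_{\ast}$ is closed in $E'$. Recall that Cohen--Hewitt asserts: if $A$ is a Banach algebra with a bounded approximate identity acting (via a continuous module action) on a Banach module $X$ in a non-degenerate manner, meaning that for every $x\in X$ the net $e_{\lambda}\cdot x$ converges to $x$ for some (equivalently, every) bounded approximate identity $\{e_{\lambda}\}$ of $A$, then in fact $A\cdot X=X$, i.e., every $x\in X$ can be written as $x=a\cdot y$ for some $a\in A$ and $y\in X$.

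For the first factorization $E=L^{1}_{\omega}\ast E$, I would verify the three hypotheses as follows. By Proposition \ref{cor:Beurling Algebra}, $E$ is a Banach module over the Beurling algebra $L^{1}_{\omega}$. Choose any $\varphi\in\mathcal{D}(\mathbb{R}^{n})$ with $\int\varphi=1$ and set $\varphi_{\varepsilon}(x)=\varepsilon^{-n}\varphi(x/\varepsilon)$; the family $\{\varphi_{\varepsilon}\}_{\varepsilon\in(0,1)}$ is bounded in $L^{1}_{\omega}$ since $\int|\varphi_{\varepsilon}(x)|\omega(x)dx\to \omega(0)=1$ as $\varepsilon\to 0^{+}$ by (\ref{eqw}) and dominated convergence, and it is a bounded approximate identity for $L^{1}_{\omega}$. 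Non-degeneracy of the module action is precisely the content of Corollary \ref{molifaer}: $\varphi_{\varepsilon}\ast g\to g$ in $E$ for each $g\in E$. Cohen--Hewitt then yields $E=L^{1}_{\omega}\ast E$.

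For the second factorization $E'_{\ast}=L^{1}_{\check{\omega}}\ast E'_{\ast}$, I would proceed analogously but take the Banach algebra to be $L^{1}_{\check{\omega}}$ and the module to be $E'_{\ast}$. First I would observe that $E'_{\ast}$ is indeed a Banach $L^{1}_{\check{\omega}}$-module: by Corollary \ref{cor3.3}, $E'$ is such a module, and since $L^{1}_{\check{\omega}}\ast L^{1}_{\check{\omega}}\subseteq L^{1}_{\check{\omega}}$ we get
\[
L^{1}_{\check{\omega}}\ast E'_{\ast}=L^{1}_{\check{\omega}}\ast L^{1}_{\check{\omega}}\ast E'\subseteq L^{1}_{\check{\omega}}\ast E'=E'_{\ast},
\]
so the module action restricts to $E'_{\ast}$. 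The family $\{\varphi_{\varepsilon}\}$ is likewise a bounded approximate identity in $L^{1}_{\check{\omega}}$, and the required non-degeneracy $\varphi_{\varepsilon}\ast f\to f$ in norm on $E'_{\ast}$ is exactly the approximation statement (\ref{eqapprox}) of Theorem \ref{thgoodspace}. A second application of Cohen--Hewitt delivers $E'_{\ast}=L^{1}_{\check{\omega}}\ast E'_{\ast}$.

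There is no real obstacle here, because the bulk of the work has been carried out by the earlier results: the module structures come from Proposition \ref{cor:Beurling Algebra} and Corollary \ref{cor3.3}, the bounded approximate identities are built from standard mollifiers, and the non-degeneracy conditions are precisely Corollary \ref{molifaer} and (\ref{eqapprox}). The only small subtlety worth writing out carefully in the proof is the verification that the module action of $L^{1}_{\check{\omega}}$ on $E'$ actually restricts to $E'_{\ast}$, so that Cohen--Hewitt can be applied \emph{inside} $E'_{\ast}$ rather than in the bigger space $E'$.
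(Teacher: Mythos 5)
Your proof is correct and follows essentially the same route as the paper: both arguments apply the Cohen--Hewitt factorization theorem to the Banach modules $E$ over $L^{1}_{\omega}$ and $E'_{\ast}$ over $L^{1}_{\check{\omega}}$, using the mollifier net $\{\varphi_{\varepsilon}\}$ as a bounded approximate identity and Corollary \ref{molifaer} together with (\ref{eqapprox}) as the key approximation input. The only (cosmetic) difference is that you invoke the ``non-degenerate module'' formulation of Cohen--Hewitt directly, while the paper uses the equivalent statement $A\ast X=\overline{\operatorname*{span}}(A\ast X)$ and then checks that the whole space lies in that closed span; note also that the boundedness of $\{\varphi_{\varepsilon}\}$ in $L^{1}_{\omega}$ follows directly from (\ref{eqw}) without appealing to convergence of $\int|\varphi_{\varepsilon}|\omega$ to $\omega(0)$, which would require continuity of $\omega$ at the origin.
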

\begin{proof}
The Cohen-Hewitt factorization theorem yields $L^{1}_{\omega}\ast E=\overline{\operatorname*{span}(L^{1}_{\omega}\ast E)}$ and $L^{1}_{\check{\omega}} \ast E'_{\ast}=\overline{\operatorname*{span}(L^{1}_{\check{\omega}} \ast E'_{\ast})}$. By Corollary \ref{molifaer} and Theorem \ref{thgoodspace}, $E=\overline{\mathcal{S}(\mathbb{R}^{n})\ast E}\subseteq \overline{\operatorname*{span}(L^{1}_{\omega}\ast E)}=L^{1}_{\omega}\ast E$ and $E'_{\ast}=\overline{\mathcal{S}(\mathbb{R}^{n})\ast E'_{\ast}}\subseteq \overline{\operatorname*{span}(L^{1}_{\check{\omega}}\ast E'_{\ast})}=L^{1}_{\check{\omega}}\ast E'_{\ast}$, that is, $E=L^{1}_{\omega}\ast E$  and $E'_{\ast}= L^{1}_{\check{\omega}} \ast E'_{\ast}$.
\end{proof} 
We now characterize $E'_{\ast}$ by showing that it is the biggest subspace of $E'$ where the property $(b)$ holds.

\begin{proposition}\label{prop3.11} We have that $E'_{\ast}=\{f\in E': \lim_{h\to 0}\|T_{h}f-f\|_{E'}=0\}.$
\end{proposition}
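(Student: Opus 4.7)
The inclusion $E'_{\ast} \subseteq \{f \in E': \lim_{h\to 0}\|T_h f - f\|_{E'} = 0\}$ I will derive immediately from Theorem \ref{thgoodspace}, which asserts that property $(b)$ holds on $E'_{\ast}$: for every $f \in E'_{\ast}$, the map $h \mapsto T_h f$ is norm-continuous from $\mathbb{R}^n$ into $E'$, and evaluating continuity at $h=0$ yields exactly the stated limit.

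For the reverse inclusion, my plan is to fix $f \in E'$ satisfying $\lim_{h\to 0}\|T_h f - f\|_{E'} = 0$, approximate $f$ in the $E'$-norm by elements of $E'_{\ast}$, and then invoke the closedness of $E'_{\ast}$ in $E'$ recorded immediately before Theorem \ref{thgoodspace} via the Cohen-Hewitt factorization theorem. Concretely, I will pick $\varphi \in \mathcal{D}(\mathbb{R}^n)$ with $\int \varphi(x)\,dx = 1$ and $\mathrm{supp}\,\varphi \subseteq \overline{B(0,R)}$, and set $\varphi_{\varepsilon}(x) = \varepsilon^{-n}\varphi(x/\varepsilon)$. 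Since $\varphi_{\varepsilon} \in \mathcal{S}(\mathbb{R}^n) \subset L^{1}_{\check{\omega}}$, the element $\varphi_{\varepsilon} \ast f$ already lies in $L^{1}_{\check{\omega}} \ast E' = E'_{\ast}$, so it will be enough to show that $\|\varphi_{\varepsilon} \ast f - f\|_{E'} \to 0$ as $\varepsilon \to 0^{+}$.

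To obtain that limit, I will test against an arbitrary $g \in E$ with $\|g\|_E \leq 1$. By the defining identity (\ref{eq:convolution}) for the convolution of $L^{1}_{\check{\omega}}$ with $E'$, together with the $E$-valued Bochner-integral representation $\check{\varphi}_{\varepsilon} \ast g = \int_{\mathbb{R}^n} \varphi_{\varepsilon}(y)\, T_{y} g\, dy$ (which follows from the Riemann-sum argument in the proof of Lemma \ref{lemma:ime2} after the substitution $z=-y$) and $\int \varphi_{\varepsilon} = 1$, the continuous functional $f$ commutes with the Bochner integral and gives
$$ \langle \varphi_{\varepsilon} \ast f - f,\, g \rangle = \langle f,\, \check{\varphi}_{\varepsilon} \ast g - g \rangle = \int_{\mathbb{R}^n} \varphi_{\varepsilon}(y)\, \langle T_{-y} f - f,\, g\rangle\, dy. $$
Bounding the integrand by $\|T_{-y} f - f\|_{E'}\|g\|_E$, taking the supremum over $\|g\|_E \leq 1$, and performing the change of variables $y = \varepsilon t$ will then yield
$$ \|\varphi_{\varepsilon} \ast f - f\|_{E'} \leq \int_{\mathbb{R}^n} |\varphi(t)|\, \|T_{-\varepsilon t} f - f\|_{E'}\, dt \leq \|\varphi\|_{L^{1}}\, \sup_{|h|\leq \varepsilon R}\|T_h f - f\|_{E'}, $$
and the right-hand side tends to $0$ as $\varepsilon \to 0^{+}$ by the standing hypothesis on $f$ and continuity of translations at $0$.

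The main point requiring care is the interchange of the pairing with $f$ and the integral, which hinges on viewing $\check{\varphi}_{\varepsilon} \ast g$ as a genuine $E$-valued Bochner integral rather than merely as a Schwartz-distribution convolution. This is essentially already contained in the proofs of Lemma \ref{lemma:ime2} and Corollary \ref{molifaer}, so I do not anticipate any real obstacle. Once the norm approximation $\varphi_{\varepsilon}\ast f \to f$ in $E'$ has been established, the closedness of $E'_{\ast}$ in $E'$ immediately places $f$ in $E'_{\ast}$, finishing the proof.
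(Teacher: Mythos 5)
Your proposal is correct and follows essentially the same route as the paper: the forward inclusion via the norm-continuity of translations on $E'_{\ast}$ from Theorem \ref{thgoodspace}, and the reverse inclusion by showing $\|\varphi_{\varepsilon}\ast f-f\|_{E'}\to 0$ for $f$ with norm-continuous translations (via the same pairing/Bochner-integral estimate the paper uses) and then invoking the Cohen--Hewitt closedness of $E'_{\ast}=L^{1}_{\check{\omega}}\ast E'$. No gaps; the only cosmetic difference is that the paper tests against $\phi\in\mathcal{D}(\mathbb{R}^{n})$ and concludes by density of $\mathcal{D}(\mathbb{R}^{n})$ in $E$, whereas you test directly against the unit ball of $E$.
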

\begin{proof} Call momentarily $X=\{f\in E': \lim_{h\to 0}\|T_{h}f-f\|_{E'}=0\}$, it is clearly a closed subspace of $E'$. By the approximation property (\ref{eqapprox}), it is enough to prove that $\mathcal{D}(\mathbb{R}^{n})\ast E'$ is dense in $X$. For this, we will show that if $\varphi\in\mathcal{D}(\mathbb{R}^{n})$ is positive and $\int_{\mathbb{R}^{n}}\varphi(y)dy=1$, then $\lim_{\varepsilon\to0}\|f\ast \varphi_{\varepsilon}-f\|_{E'}=0$, for $f\in X$. We apply a similar argument to that used in the proof of Corollary \ref{molifaer}. Take $\phi\in \mathcal{D}(\mathbb{R}^{n})$, then
$$| \langle f\ast \varphi_{\varepsilon}-f,\phi\rangle |=\left| \left\langle f,\int_{\mathbb{R}^{n}}\varphi(y)(T_{\varepsilon y}\phi-\phi) dy\right\rangle \right|\leq \|\phi\|_{E}\sup_{y\in \operatorname*{supp}\varphi} \|T_{-\varepsilon y}f-f\|_{E'} ,$$
which shows the claim.
\end{proof}

In view of property $(b)''$ from Proposition \ref{prop3.2},  we can naturally define a convolution mapping $E'\times \check{E}
\to C(\mathbb{R}^{n})$, where $\check{E}=\{g\in \mathcal{S}'(\mathbb{R}^{n}):\: \check{g}\in E\}$ with norm $\|g\|_{\check{E}}:=\|\check{g}\|_{E}$. We give a simple proposition that describes the mapping properties of this convolution.
As usual, $L^{\infty}_{\omega}$, the dual of the Beurling algebra $L^{1}_{\omega}$, is the Banach space of all measurable functions satisfying
$$
||u||_{\infty,\omega}=\operatorname*{ess}\sup_{x\in\mathbb{R}^{n}} \frac{|u(x)|}{\omega(x)}<\infty.
$$
We need the following two closed subspaces of $L^{\infty}_{\omega}$,
\begin{equation}
\label{UC}
UC_{\omega}:= \left\{u\in L^{\infty}_{\omega}: \lim_{h\to0}||T_{h}u-u||_{\infty,\omega}=0 \right\}
\end{equation}
and
\begin{equation}
\label{C}
C_{\omega}:= \left\{u\in C(\mathbb{R}^{n}): \lim_{|x|\to\infty}\frac{u(x)}{\omega(x)}=0 \right\}
\end{equation}
\begin{proposition}
\label{convolution E E'}
$E'\ast \check{E} \subseteq UC_{\omega}$ and $\ast:E'\times \check{E} \to UC_{\omega}$ is continuous. If $E$ is reflexive, then $E'\ast \check{E} \subset C_{\omega}$.
\end{proposition}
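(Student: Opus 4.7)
The plan is to realize the convolution as a duality pairing and exploit the continuity of the translation action on $E$. Define
\[
(f\ast g)(h) := \langle f, T_{-h}\check g\rangle, \qquad f\in E',\ g\in \check E,
\]
which is well defined since $\check g\in E$. By property $(b)$ of Theorem \ref{pro:ime} applied to $\check g$, the map $h\mapsto T_{-h}\check g$ is continuous into $E$, so $f\ast g$ is a continuous function on $\mathbb R^n$. The pointwise duality estimate
\[
|(f\ast g)(h)| \leq \|f\|_{E'}\|T_{-h}\check g\|_E \leq \omega(h)\|f\|_{E'}\|g\|_{\check E}
\]
yields $f\ast g \in L^\infty_\omega$ together with the norm bound $\|f\ast g\|_{\infty,\omega}\leq \|f\|_{E'}\|g\|_{\check E}$, which already delivers continuity of the bilinear map once we upgrade its range to $UC_\omega$.

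To pass from $L^\infty_\omega$ to $UC_\omega$, I would factor the translation increment as $T_{-(x+h)}\check g - T_{-x}\check g = T_{-x}(T_{-h}\check g - \check g)$. Pairing against $f$ and using the growth bound from Definition \ref{def:1},
\[
|(f\ast g)(x+h) - (f\ast g)(x)| \leq \|f\|_{E'}\,\omega(x)\,\|T_{-h}\check g - \check g\|_E.
\]
Dividing by $\omega(x)$ and taking the essential supremum in $x$ gives
\[
\|T_h(f\ast g) - f\ast g\|_{\infty,\omega} \leq \|f\|_{E'}\,\|T_{-h}\check g - \check g\|_E,
\]
and the right-hand side tends to $0$ as $h\to 0$ by property $(b)$ of $E$. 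Hence $f\ast g\in UC_\omega$ and $\ast\colon E'\times \check E\to UC_\omega$ is continuous.

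For the reflexive case, the strategy is density plus the trivial fact that compactly supported functions lie in $C_\omega$. By Proposition \ref{prop3.3}, $\mathcal S(\mathbb R^n)$ is dense in $E'$ when $E$ is reflexive, and it is dense in $\check E$ by property $(a)$; combining this with the density of $\mathcal D(\mathbb R^n)$ in $\mathcal S(\mathbb R^n)$, I would choose $\varphi_n,\psi_n\in\mathcal D(\mathbb R^n)$ with $\varphi_n\to f$ in $E'$ and $\psi_n\to g$ in $\check E$. Each $\varphi_n\ast\psi_n\in\mathcal D(\mathbb R^n)\subset C_\omega$, and the continuity bound from the first paragraph gives $\varphi_n\ast\psi_n\to f\ast g$ in $L^\infty_\omega$. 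A short $\varepsilon/2$ argument shows that $C_\omega$ is closed in $L^\infty_\omega$, so the limit $f\ast g$ belongs to $C_\omega$. The only slightly delicate point in the whole proof is the factoring trick $T_{-(x+h)} - T_{-x} = T_{-x}(T_{-h} - I)$, which separates the polynomial growth in $x$ from the smallness in $h$ and is what makes uniform continuity in the weighted sense possible; after that, everything reduces to standard density and closedness arguments.
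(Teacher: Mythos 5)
Your proof is correct and takes essentially the same approach as the paper: the $UC_{\omega}$ membership and the continuity estimate come from realizing the convolution as the pairing $(f\ast g)(h)=\langle f, T_{-h}\check g\rangle$ and exploiting the norm continuity and growth of the translation group on $E$ (which is what the paper compresses into its one-line appeal to property $(b)''$), and the reflexive case is the same density argument, the paper approximating through $\mathcal{S}(\mathbb{R}^{n})=\mathcal{S}(\mathbb{R}^{n})\ast\mathcal{S}(\mathbb{R}^{n})$ where you approximate each factor by test functions separately. Your explicit identity $T_{-(x+h)}-T_{-x}=T_{-x}(T_{-h}-I)$ and the verification that $C_{\omega}$ is closed under $\|\cdot\|_{\infty,\omega}$-limits usefully spell out steps the paper leaves implicit.
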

\begin{proof} The first assertion follows at once from the property $(b)''$. If $E'$ is reflexive, Proposition \ref{prop3.3} gives $\mathcal{S}(\mathbb{R}^{n})\hookrightarrow E'$. Thus, $\mathcal{S}(\mathbb{R}^{n})=\mathcal{S}(\mathbb{R}^{n})\ast\mathcal{S}(\mathbb{R}^{n})$ is dense in the closure of $\operatorname*{span}(E'\ast \check{E})$ with respect to the norm $\|\:\:\|_{\infty,\omega}$. Since $\mathcal{S}(\mathbb{R}^{n})$ is obviously dense in
$C_{\omega}$, we obtain $E'\ast \check{E} \subseteq C_{\omega}$. 
\end{proof}

We end this section with the following remark.
\begin{remark}\label{rkbeurlingconvolution} The properties from Lemma \ref{lemma:ime2} and Corollary \ref{molifaer} essentially characterize the class of translation-invariant Banach spaces of tempered distributions in the following sense. Let $X$ be a Banach space that satisfies the condition $(a)'$ and let $\eta:\mathbb{R}^{n}\to (0,\infty)$ be a measurable function such that $\log \eta$ is subadditive, $\eta(0)=1$, and $\eta$ is polynomially bounded. Assume that $\|\varphi\ast g \|_{X}\leq \|\varphi\|_{1,\eta}\|g\|_{X}$ for all $\varphi\in\mathcal{D}(\mathbb{R}^{n})$ and $g\in X$. The density of $\mathcal{D}(\mathbb{R}^{n})$ in $L^{1}_{\eta}$ automatically guarantees that $X$ becomes a Banach convolution module over the Beurling algebra $L^{1}_{\eta}$ and the convolution obviously satisfies $T_{h}(u\ast g)=(T_{h} u)\ast g=u\ast (T_{h}g)$. If we additionally assume that $L^{1}_{\eta}$ possesses a bounded approximation of the unity for $X$, that is, there is a sequence $\{e_{j}\}_{j=0}^{\infty}\subset L^{1}_{\eta}$ such that $\sup_{j}\|e_{j}\|_{1,\eta}=M<\infty$, $\lim_{j}\|e_{j}\ast u- u\|_{1,\eta}=0$, and $\lim_{j}\|e_{j}\ast g- g\|_{X}$ for all $u\in L^{1}_{\eta}$, $g\in X$, then the Cohen-Hewitt theorem yields the factorization $X=L^{1}_{\eta}\ast X$. The latter factorization property implies that $X$ satisfies the conditions $(b)'$ and $(c)'$. In addition, its weight function $\omega$ satisfies $\omega(x)\leq M\eta(x)$.
\end{remark}

%
\section{New distribution spaces}\label{new spaces}
In this section we construct and study test function and distribution spaces associated to  translation-invariant Banach spaces. We recall that throughout the rest of the paper $E$ stands for a translation-invariant Banach space of tempered distributions whose growth function of its translation group is $\omega$ (cf. Definition \ref{def:1}). The Banach space $E'_{\ast}\subseteq E'$ was introduced in Definition \ref{goodspace}.

\subsection{The test function space $\mathcal{D}_{E}$}\label{test space}
We begin by constructing our space of test functions.
Let $\mathcal{D}_E$ be the subspace of tempered distributions $\varphi\in \mathcal{S}'(\mathbb{R}^n)$ such that $\varphi^{(\alpha)}\in E$ for all $\alpha\in \mathbb{N}_{0}^n$. We topologize $\mathcal{D}_E$ by means of the family of norms
\begin{equation}\label{eq4}
\|\varphi\|_{E,N}:=\max_{|\alpha| \le N}\|\varphi^{(\alpha)}\|_E.
\end{equation}
\begin{proposition}\label{prop:ime3}
$\mathcal{D}_E$ is a Fr\'{e}chet space and $\mathcal{S}(\mathbb{R}^n)\hookrightarrow\mathcal{D}_E\hookrightarrow E\hookrightarrow \mathcal{S}'(\mathbb{R}^n)$. Moreover, $\mathcal{D}_E$ is a Fr\'{e}chet module over the Beurling algebra $L^{1}_{\omega}$, namely,
\begin{equation}\label{eq5}||u\ast \varphi||_{E,N}\leq ||u||_{1,\omega}|| \varphi||_{E,N},\ \ \ N\in\mathbb{N}_{0}.
\end{equation}
\end{proposition}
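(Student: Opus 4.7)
My plan splits the proof into four pieces: the Fréchet structure of $\mathcal{D}_E$, the continuity of the three embeddings, the density of each, and the convolution module estimate. For the Fréchet structure, the family $\{\|\cdot\|_{E,N}\}_{N\in\mathbb{N}_{0}}$ is countable and separates points, since $\|\varphi\|_{E,0}=\|\varphi\|_{E}$ and $E$ is Hausdorff, so $\mathcal{D}_E$ is metrizable. For completeness, given a Cauchy sequence $(\varphi_{j})$ in $\mathcal{D}_E$, each $(\varphi_{j}^{(\alpha)})_{j}$ is Cauchy in $E$ and converges there to some $g_\alpha\in E$; the continuous embedding $E\hookrightarrow\mathcal{S}'(\mathbb{R}^{n})$ from Theorem \ref{pro:ime}(a), together with the sequential continuity of distributional differentiation on $\mathcal{S}'(\mathbb{R}^{n})$, forces $g_\alpha=(g_0)^{(\alpha)}$ in $\mathcal{S}'(\mathbb{R}^{n})$, so $g_0\in\mathcal{D}_E$ and $\varphi_j\to g_0$ in $\mathcal{D}_E$. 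Continuity of $\mathcal{D}_E\hookrightarrow E$ is immediate from $\|\varphi\|_E=\|\varphi\|_{E,0}$, that of $E\hookrightarrow\mathcal{S}'(\mathbb{R}^{n})$ is Theorem \ref{pro:ime}(a), and for $\mathcal{S}(\mathbb{R}^{n})\to\mathcal{D}_E$ I combine the estimate $\|\phi\|_E\leq C\,q_j(\phi)$ from Theorem \ref{pro:ime}(a), applied to each $\varphi^{(\alpha)}$, with $q_j(\varphi^{(\alpha)})\leq q_{j+|\alpha|}(\varphi)$.

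For the density statements, $E$ is dense in $\mathcal{S}'(\mathbb{R}^{n})$ by Theorem \ref{pro:ime}(a), and $\mathcal{D}_E$ is dense in $E$ because $\mathcal{S}(\mathbb{R}^{n})\subseteq\mathcal{D}_E\subseteq E$ and $\mathcal{S}(\mathbb{R}^{n})$ is already dense in $E$. The nontrivial density claim, and the main obstacle, is that $\mathcal{S}(\mathbb{R}^{n})$ is dense in $\mathcal{D}_E$. My plan is a mollify--then--truncate argument: fix $\rho\in\mathcal{D}(\mathbb{R}^{n})$ with $\int\rho=1$, so that $(\rho_\varepsilon*\varphi)^{(\alpha)}=\rho_\varepsilon*\varphi^{(\alpha)}$ and Corollary \ref{molifaer} applied to each derivative give $\rho_\varepsilon*\varphi\to\varphi$ in every seminorm $\|\cdot\|_{E,N}$; this produces $C^\infty$ approximants inside $\mathcal{D}_E$. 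Since these need not lie in $\mathcal{S}(\mathbb{R}^{n})$, I then multiply by a cutoff $\chi(\cdot/R)$ with $\chi\in\mathcal{D}(\mathbb{R}^{n})$ and $\chi\equiv1$ near the origin, which places the product in $\mathcal{D}(\mathbb{R}^{n})\subseteq\mathcal{S}(\mathbb{R}^{n})$. The truncation is the technical core of the argument because $E$ is not assumed to be a module for pointwise multiplication; I would expand $(\chi(\cdot/R)(\rho_\varepsilon*\varphi))^{(\alpha)}$ by Leibniz and control the main term $(1-\chi(\cdot/R))(\rho_\varepsilon*\varphi)^{(\alpha)}$ by first approximating $(\rho_\varepsilon*\varphi)^{(\alpha)}\in E$ by a compactly supported element (density of $\mathcal{D}(\mathbb{R}^{n})$ in $E$), so that $\chi(\cdot/R)$ eventually acts as the identity on that piece and the error is uniformly small in $E$-norm; the remaining Leibniz terms absorb the factor $R^{-|\beta|}$.

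Finally, for the convolution estimate, let $u\in L^{1}_{\omega}$ and $\varphi\in\mathcal{D}_E$; then $u\ast\varphi\in E$ and $\|u\ast\varphi\|_E\leq\|u\|_{1,\omega}\|\varphi\|_E$ by Proposition \ref{cor:Beurling Algebra}. The identity $(u\ast\varphi)^{(\alpha)}=u\ast\varphi^{(\alpha)}$ can be verified either by transposing against $\mathcal{S}(\mathbb{R}^{n})$, or by approximating $u$ by Schwartz functions in $L^{1}_{\omega}$ and using the joint continuity of convolution provided by Lemma \ref{lemma:ime2} and Proposition \ref{cor:Beurling Algebra}. Applying the norm estimate of Proposition \ref{cor:Beurling Algebra} to each $\varphi^{(\alpha)}\in E$ and taking the maximum over $|\alpha|\leq N$ immediately yields (\ref{eq5}) and exhibits $\mathcal{D}_E$ as a Fréchet module over $L^{1}_{\omega}$.
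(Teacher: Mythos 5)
Most of your proposal is sound and matches the paper: the Fr\'{e}chet-space argument, the continuity of the embeddings, the density of $\mathcal{D}_E$ in $E$ and of $E$ in $\mathcal{S}'(\mathbb{R}^n)$, and the derivation of (\ref{eq5}) from $(u\ast\varphi)^{(\alpha)}=u\ast\varphi^{(\alpha)}$ together with Proposition \ref{cor:Beurling Algebra} are all correct and are essentially what the paper does. The problem is the step you yourself identify as the technical core: the truncation in the proof that $\mathcal{S}(\mathbb{R}^{n})$ is dense in $\mathcal{D}_E$. Since $E$ is not assumed to be a module under pointwise multiplication by smooth bounded functions (the paper stresses this explicitly), there is no operator bound for $g\mapsto(1-\chi(\cdot/R))g$ on $E$. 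Your proposed fix --- write $(\rho_\varepsilon\ast\varphi)^{(\alpha)}=g_0+r$ with $g_0\in\mathcal{D}(\mathbb{R}^{n})$ and $\|r\|_E$ small, so that $(1-\chi(\cdot/R))g_0=0$ for large $R$ --- leaves you with the term $(1-\chi(\cdot/R))r$, whose $E$-norm you cannot control by $\|r\|_E$: that would require exactly the multiplier bound you do not have, and it need not hold uniformly in $R$. The same issue infects the lower-order Leibniz terms: the factor $R^{-|\beta|}$ only helps if you can bound $\|\chi^{(\beta)}(\cdot/R)(\rho_\varepsilon\ast\varphi)^{(\alpha-\beta)}\|_E$ independently of $R$, and the only available estimate $\|\cdot\|_E\leq M_K\,p_j(\cdot)$ has a constant depending on the (growing) support, which via the translation bound $(c)$ costs powers of $R$ that the factor $R^{-1}$ does not absorb. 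So the truncation argument has a genuine gap.

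The paper circumvents truncation entirely with a two-parameter trick: fix $N$, choose $\psi_j\in\mathcal{S}(\mathbb{R}^{n})$ with $\|\varphi-\psi_j\|_E\leq j^{-N-1}$ (only the $E$-norm, no derivatives), and set $\phi_j(x)=j^{n}\phi(jx)$ with $\int\phi=1$. Then $\psi_j\ast\phi_j\in\mathcal{S}(\mathbb{R}^{n})$ automatically, and for $|\alpha|\leq N$,
\begin{equation*}
\|\varphi^{(\alpha)}-\psi_j\ast(\phi_j)^{(\alpha)}\|_E\leq\|\varphi^{(\alpha)}-\varphi^{(\alpha)}\ast\phi_j\|_E+\|\varphi-\psi_j\|_E\,\|(\phi_j)^{(\alpha)}\|_{1,\omega},
\end{equation*}
where the first term tends to $0$ by Corollary \ref{molifaer} and the second is $O(j^{-N-1}\cdot j^{|\alpha|})\to0$, since $\|(\phi_j)^{(\alpha)}\|_{1,\omega}=j^{|\alpha|}\int|\phi^{(\alpha)}(x)|\,\omega(x/j)\,dx$ grows at most like $j^{N}$. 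The rapid $E$-approximation rate is tuned to beat the blow-up of the mollifier's derivative norms, and no pointwise multiplication on $E$ is ever needed. You should replace your mollify-then-truncate scheme with this (or supply a genuinely new argument for the truncation estimate).
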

\begin{proof}
$\mathcal{D}_{E}$ is a Fr\'{e}chet space as a countable intersection of Banach spaces and $\mathcal{S}(\mathbb{R}^n)\subseteq\mathcal{D}_E\hookrightarrow E\hookrightarrow \mathcal{S}'(\mathbb{R}^n)$. The relation (\ref{eq5}) follows from Proposition \ref{cor:Beurling Algebra} and the definition of the norms (\ref{eq4}). It remains to show the density of the embedding $\mathcal{S}(\mathbb{R}^n)\hookrightarrow\mathcal{D}_E$. Let $\varphi\in\mathcal{D}_{E}$ and fix $N\in\mathbb{N}$. Find a sequence $\left\{\psi_{j}\right\}_{j=1}^{\infty}$ of functions from $\mathcal{S}(\mathbb{R}^{n})$ such that $||\varphi-\psi_{j}||_{E}\leq j^{-N-1}$, for all $j$. Pick then $\phi\in\mathcal{S}(\mathbb{R}^{n})$ such that $\int_{\mathbb{R}^{n}}\phi(x)dx=1$ and set $\phi_{j}(x)=j^{n}\phi(j x)$. We show that $\psi_{j}\ast \phi_{j}\to \varphi$ with respect to the norm $||\: \:||_{E,N}$; indeed, by Corollary \ref{molifaer} and Proposition \ref{cor:Beurling Algebra},
\begin{equation*}
\limsup_{j\to\infty}||\varphi-\psi_{j}\ast \phi_{j}||_{E,N}\leq  \limsup_{j\to\infty} j^{N}||\varphi-\psi_{j}||_{E} \max_{|\alpha|\leq N}\int_{\mathbb{R}^{n}} |\phi^{(\alpha)}(x)|\: \omega(x/j)d x
=0.
\end{equation*} 
\end{proof}

It turns out that all elements of our test function space $\mathcal{D}_{E}$ are smooth functions. We need a lemma in order to establish this fact.

\begin{lemma}
\label{lemma4.2} Let $K\subset \mathbb{R}^{n}$ be compact. There is a positive integer $j$ such that $\mathcal{D}_{K}^{j}\subset E\cap E'_{\ast}$ and the inclusion mappings $\mathcal{D}_{K}^{j}\to E$ and $\mathcal{D}_{K}^{j}\to E'_{\ast}$ are continuous.
\end {lemma}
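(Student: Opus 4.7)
The plan is to pick a single integer $j$ and a compact neighborhood $K'\supset K$ such that both $\mathcal{D}_{K'}\to E$ and $\mathcal{D}_{K'}\to E'$ are continuous when $\mathcal{D}_{K'}$ is endowed with the $C^{j}$-norm $p_{j}$, and then to pass from the Fr\'{e}chet space $\mathcal{D}_{K'}$ to the Banach space $\mathcal{D}_{K}^{j}$ by a mollification argument, inferring the stronger assertion for $E'_{\ast}$ by closedness.

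First I would set $K':=K+\overline{B(0,1)}$. By condition $(a)'$ the inclusion $\mathcal{D}(\mathbb{R}^{n})\hookrightarrow E$ is continuous, so its restriction to the Fr\'{e}chet space $\mathcal{D}_{K'}$ yields an integer $j_{1}$ and a constant $C_{1}$ with $\|\phi\|_{E}\leq C_{1}p_{j_{1}}(\phi)$ for every $\phi\in\mathcal{D}_{K'}$. Similarly, Proposition \ref{prop3.2}$(a)''$ provides continuity of $\mathcal{S}(\mathbb{R}^{n})\to E'$; composing with $\mathcal{D}_{K'}\hookrightarrow\mathcal{S}(\mathbb{R}^{n})$ yields $j_{2}$ and $C_{2}$ with $\|\phi\|_{E'}\leq C_{2}p_{j_{2}}(\phi)$ on $\mathcal{D}_{K'}$. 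I take $j:=\max(j_{1},j_{2})$ and $C:=\max(C_{1},C_{2})$.

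Next, for arbitrary $\varphi\in\mathcal{D}_{K}^{j}$ I would fix a mollifier $\phi\in\mathcal{D}(\mathbb{R}^{n})$ supported in $\overline{B(0,1)}$ with $\int\phi=1$ and set $\varphi_{\varepsilon}:=\phi_{\varepsilon}\ast\varphi$ for $0<\varepsilon<1$. Then $\varphi_{\varepsilon}\in\mathcal{D}_{K'}$; since each derivative of order $|\alpha|\leq j$ can be moved onto $\varphi$, Young's inequality gives $p_{j}(\varphi_{\varepsilon})\leq p_{j}(\varphi)$, and the uniform continuity of $\varphi^{(\alpha)}$ (which follows from $\operatorname*{supp}\varphi$ being compact) gives $p_{j}(\varphi-\varphi_{\varepsilon})\to0$ as $\varepsilon\to0^{+}$.

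Combining with the step-one bounds, $\{\varphi_{\varepsilon}\}$ is Cauchy in both $E$ and $E'$, hence converges in each Banach space. Because the embeddings $E,E'\hookrightarrow\mathcal{D}'(\mathbb{R}^{n})$ are continuous and $\varphi_{\varepsilon}\to\varphi$ in $\mathcal{D}'(\mathbb{R}^{n})$, these Banach limits must both equal $\varphi$, so $\varphi\in E\cap E'$ and passing to the limit in $\|\varphi_{\varepsilon}\|_{E},\|\varphi_{\varepsilon}\|_{E'}\leq Cp_{j}(\varphi)$ produces the two continuous inclusions. Finally, since every $\varphi_{\varepsilon}\in\mathcal{S}(\mathbb{R}^{n})\subset E'_{\ast}$ by property $(a)''$ of $E'_{\ast}$ in Theorem \ref{thgoodspace}, and $E'_{\ast}$ is a closed subspace of $E'$, the $E'$-limit $\varphi$ already lies in $E'_{\ast}$, giving $\mathcal{D}_{K}^{j}\hookrightarrow E'_{\ast}$ with the same bound. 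The only mild obstacle is ensuring that a single $j$ serves both inclusions, which is why I take the maximum above; the rest is a standard mollifier argument whose one delicate point is that I must use only $\varphi\in C^{j}$, not $C^{\infty}$, when computing $p_{j}(\varphi-\varphi_{\varepsilon})$.
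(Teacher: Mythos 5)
Your argument is correct and follows essentially the same route as the paper: enlarge $K$ to a compact neighborhood, extract a single order $j$ from the continuity of $\mathcal{D}(\mathbb{R}^{n})\hookrightarrow E$ and $\mathcal{D}(\mathbb{R}^{n})\to E'$ on that neighborhood, and then extend the estimates from $\mathcal{D}_{K'}$ to $\mathcal{D}_{K}^{j}$ by regularization, using that $\mathcal{S}(\mathbb{R}^{n})\subset E'_{\ast}$ and that $E'_{\ast}$ is closed in $E'$. You have simply written out in full the regularization step that the paper leaves as ``we convince ourselves.''
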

\begin{proof} We may of course assume that $K$ has non-empty interior. Let $\sigma>0$ and set $K_{\sigma}=K+\{x\in\mathbb{R}^{n}: |x|\leq \sigma\}$. Since $\mathcal{D}(\mathbb{R}^{n}) \hookrightarrow E$ and $\mathcal{D}(\mathbb{R}^{n}) \to E'_{\ast}$ are continuous, there is $j=j_{K_{
\sigma}}\in\mathbb{N}$ such that
\begin{equation}
\label{eqlemma}
||\varphi||_{E}\leq M_{K_{\sigma}} p_{j}(\varphi) \  \  \   \mbox{and} \  \  \ ||\varphi||_{E'}\leq M_{K_{\sigma}} p_{j}(\varphi)
\end{equation}
for every $\varphi\in \mathcal{D}_{K_{\sigma}}$. Using a regularization argument, Corollary \ref{molifaer}, and Theorem \ref{thgoodspace}, we convince ourselves that (\ref{eqlemma}) remains valid for all $\varphi\in\mathcal{D}_{K}^{j}$.
\end{proof}

We can now show that $\mathcal{D}_{E}\hookrightarrow \mathcal{E}(\mathbb{R}^{n})$. More generally \cite{horvath,S}, let $\mathcal{O}_{C}(\mathbb{R}^{n})$ be the test function space corresponding to the space $\mathcal{O}'_{C}(\mathbb{R}^{n})$ of convolutors of $\mathcal{S}'(\mathbb{R}^{n})$, that is, $\varphi\in\mathcal{O}_{C}(\mathbb{R}^{n})$ if there is $k\in\mathbb{N}$ such that $|\varphi^{(\alpha)}(x)|\leq M_{\alpha}(1+|x|)^{k}$, for all $\alpha$. It is topologized by a canonical inductive limit topology as in \cite{horvath}. The spaces of continuous functions $UC_{\omega}$ and $C_{\omega}$ were introduced in (\ref{UC}) and (\ref{C}). We have,

\begin{proposition}
\label{smooth prop} The embedding $\mathcal{D}_{E}\hookrightarrow \mathcal{O}_{C}(\mathbb{R}^{n})$ holds. Furthermore, the partial derivatives of every $\varphi\in\mathcal{D}_{E}$ are elements of $C_{\check{\omega}}$, namely, they have decay
\begin{equation}
\label{eqgrowth}
\lim_{|x|\to\infty} \frac{\varphi^{(\alpha)}(x)}{\omega(-x)}=0, \  \  \  \ \alpha\in\mathbb{N}^{n}.
\end{equation}
\end{proposition}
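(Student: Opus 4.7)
The plan is to combine the Schwartz parametrix method for $(1-\Delta)^{N}$ with the continuous translation structure on $E'_{\ast}$ afforded by Theorem \ref{thgoodspace}. Fix a multi-index $\alpha$. First I would take the Bessel-potential fundamental solution $G_{N}$ of $(1-\Delta)^{N}$ and truncate it with a symmetric cutoff $\chi\in\mathcal{D}(\mathbb{R}^{n})$ equal to $1$ near the origin; setting $P_{N}:=\chi G_{N}$ and $\gamma_{N}:=(1-\Delta)^{N}P_{N}-\delta$ gives the parametrix identity $\delta=(1-\Delta)^{N}P_{N}-\gamma_{N}$. Since $G_{N}$ is of class $C^{j}$ whenever $2N>n+j$, choosing $N$ above the regularity threshold $j_{K}$ furnished by Lemma \ref{lemma4.2} for $K:=\operatorname*{supp}\chi$ places $P_{N}\in\mathcal{D}_{K}^{j_{K}}\subset E'_{\ast}$, and $\gamma_{N}\in\mathcal{D}(\mathbb{R}^{n})\subset E'_{\ast}$ is automatic. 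Convolving the parametrix with $\varphi\in\mathcal{D}_{E}$ and differentiating yields
\[
\varphi^{(\alpha)}=P_{N}\ast\bigl[\partial^{\alpha}(1-\Delta)^{N}\varphi\bigr]-\gamma_{N}\ast\varphi^{(\alpha)},
\]
a finite linear combination of convolutions $P\ast g$ with $P\in E'_{\ast}\cap\mathcal{E}'(\mathbb{R}^{n})$ and $g\in E$.

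Next I would analyse each term pointwise. Since $\chi$ is symmetric $P=\check{P}$, and compactness of $\operatorname*{supp}P$ lets me rewrite the convolution as the duality pairing
\[
(P\ast g)(x)=\langle T_{-x}P,g\rangle_{E',E}.
\]
A transpose computation from property $(c)$ in Theorem \ref{pro:ime} gives $\|T_{-x}P\|_{E'}\leq\check{\omega}(x)\|P\|_{E'}$, and continuity of $x\mapsto T_{-x}P$ into $E'$ is part of property $(b)$ in Theorem \ref{thgoodspace} applied to $P\in E'_{\ast}$. Hence $(P\ast g)(x)$ is continuous in $x$ with $|(P\ast g)(x)|\leq C\,\check{\omega}(x)\|g\|_{E}$. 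Feeding this into the parametrix decomposition and using (\ref{eqw}) yields $|\varphi^{(\alpha)}(x)|\leq M_{\alpha}(1+|x|)^{\tau}$, with $M_{\alpha}$ controlled by a seminorm $\|\varphi\|_{E,N_{\alpha}}$. This proves the continuous inclusion $\mathcal{D}_{E}\hookrightarrow\mathcal{O}_{C}(\mathbb{R}^{n})$.

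To sharpen the uniform bound to the decay (\ref{eqgrowth}), I would approximate $g$ in the $E$-norm by a sequence $\psi_{k}\in\mathcal{S}(\mathbb{R}^{n})$, available by the density $\mathcal{S}(\mathbb{R}^{n})\hookrightarrow E$ from Theorem \ref{pro:ime}(a). The same estimate gives
\[
\sup_{x\in\mathbb{R}^{n}}\frac{|(P\ast g)(x)-(P\ast\psi_{k})(x)|}{\check{\omega}(x)}\leq\|P\|_{E'}\|g-\psi_{k}\|_{E}\longrightarrow 0,
\]
so it suffices to establish the decay for each $P\ast\psi_{k}$ separately. But $(P\ast\psi_{k})(x)=\int_{K}P(y)\psi_{k}(x-y)\,dy$ is rapidly decreasing in $|x|$ (compact support of $P$ together with $\psi_{k}\in\mathcal{S}$), hence decays faster than the polynomially bounded $\check{\omega}$ by (\ref{eqw}); the usual triangle inequality then delivers $\varphi^{(\alpha)}(x)/\check{\omega}(x)\to 0$, which is precisely (\ref{eqgrowth}).

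The hard part, I expect, is an asymmetry in the convolution machinery: Proposition \ref{convolution E E'} is formulated for $E'\ast\check{E}$ and gives an $\omega$-type bound, whereas the parametrix naturally delivers terms of the form $E'_{\ast}\ast E$ and we need a $\check{\omega}$-type bound, matching exactly the $\omega(-x)$ appearing in (\ref{eqgrowth}). Sidestepping Proposition \ref{convolution E E'} by working directly through the $E',E$ pairing---cleanly possible once $\chi$ is chosen symmetric---and keeping track of how Lemma \ref{lemma4.2}'s threshold $j_{K}$ forces the minimum size of $N$, are the delicate points of the argument; the rest is routine.
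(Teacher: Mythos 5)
Your argument is correct and follows essentially the same route as the paper: a Schwartz parametrix (the paper uses $\Delta^{l}$ and its fundamental solution rather than $(1-\Delta)^{N}$) combined with Lemma \ref{lemma4.2}, the translation-norm estimate $\|T_{-x}P\|_{E'}\leq\omega(-x)\|P\|_{E'}$, and a density argument to upgrade the uniform weighted bound to the decay (\ref{eqgrowth}) (the paper approximates $\varphi$ in $\mathcal{D}_{E}$ by elements of $\mathcal{D}(\mathbb{R}^{n})$ instead of approximating in $E$, which is a cosmetic difference). The one point to make explicit in your final step is that a rapidly decreasing function is $o(\check{\omega})$ not merely because $\check{\omega}$ is polynomially bounded above, but because subadditivity of $\log\omega$ gives $\check{\omega}(x)=\omega(-x)\geq 1/\omega(x)\geq M^{-1}(1+|x|)^{-\tau}$.
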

\begin{proof}We will employ the powerful Schwartz parametrix method \cite{S}. Let $K$ be a compact symmetric neighborhood of 0 and find $\chi\in \mathcal{D}_K$ such that $\chi=1$ near 0. Consider the Laplace operator $\Delta$ on $\mathbb{R}^{n}$. Let $F_{l}$ be a fundamental solution of $\Delta^{l}$, i.e., $\Delta^l F_{l}=\delta$. Then,  $\triangle^l(\chi F_{l})-\delta=\varsigma_{l}\in \mathcal{D}(\mathbb{R}^n)$, so that the parametrix formula
\begin{equation}
\label{eq:6}
f=\triangle^l((\chi F_{l})\ast f)-\varsigma_{l}*f
\end{equation}
holds for every $f\in\mathcal{D}'(\mathbb{R}^{n})$. By Lemma \ref{lemma4.2}, one can find $j\in\mathbb{N}$ for which $\mathcal{D}_{K}^{j}\subset E\cap E'$. Let $\varphi\in \mathcal{D}_{E}$ .
Since there is a sufficiently large $l\in\mathbb{N}$ such that  $\chi F_{l}\in \mathcal{D}^{j}_{K}\subset E'$, we conclude from (\ref{eq:6}) and Proposition
\ref{convolution E E'} that for each $\alpha\in\mathbb{N}^{n}$ one has $\varphi^{(\alpha)}=((\chi F_{l})\ast(\Delta^{l}\varphi^{(\alpha)}))-\varsigma_{l}\ast\varphi^{(\alpha)}\in \check{E}'\ast E\subset UC_{\check{\omega}}$ and, by Proposition \ref{prop3.2} we actually obtain
\begin{align*}
|\varphi^{(\alpha)}(x)|&\leq \omega(-x)\left(  \|\check{\chi} \check{F_{l}}\|_{E'}\|\Delta^{l}\varphi^{(\alpha)}\|_{E}+\|\check{\varsigma_{l}}\|_{E'}\|\varphi^{(\alpha)}\|_{E}\right)\\
&
\leq M_{l} \omega(-x)\|\varphi\|_{E,2l+|\alpha|},
\end{align*}
which also shows the embedding $\mathcal{D}_{E}\hookrightarrow \mathcal{O}_{C}(\mathbb{R}^{n})$. Furthermore, if we set $\|\cdot\|_{\infty,\check{\omega},N}:=\max_{|\alpha|\leq N}\|\cdot\|_{\infty,\check{\omega}}$, $N=0,1,2,\dots$, the above estimates imply
\begin{equation}
\label{bounds}
\|\varphi\|_{\infty,\check{\omega},N}\leq M_{l}\|\varphi\|_{E,N+2l}\: , \  \  \ \varphi\in \mathcal{D}_{E}, \ N\in\mathbb{N}_{0}.
\end{equation}
In order to show (\ref{eqgrowth}), we make use of the density $\mathcal{D}(\mathbb{R}^{n})\hookrightarrow \mathcal{D}_{E}$. Fix $N$. Given $\varepsilon>0$, find $\rho \in \mathcal{D}(\mathbb{R}^{n})$ such that  $\|\varphi-\rho\|_{E,2l+N}< \varepsilon/M_{l}$. Choose also $\lambda>0$ so large that $\rho(x)=0$ for all $|x|\geq \lambda$. By (\ref{bounds}), we obtain that $|\varphi^{(\alpha)}(x)|<\varepsilon\omega(-x)$ for all $|x|\geq \lambda$ and $|\alpha|\leq N$.
\end{proof}
\begin{remark}
\label{remark inequality} For $u\in\mathcal{S}'(\mathbb{R}^{n})$ with $u^{(\alpha)}\in L^{1}_{\omega}$, $|\alpha|\leq N$, set 
$$\|u\|_{1,\omega,N}:=\max_{|\alpha|\leq N}\|u^{(\alpha)}\|_{1,\omega}
$$
and keep $l$ as above. Note that $\chi F_{l} \in\mathcal{D}^{j}_{K}\subset E$. If $\varphi \in \mathcal{S}(\mathbb{R}^{n})$, Proposition \ref{cor:Beurling Algebra} leads to
$$
\|\varphi^{(\alpha)}\|_{E}\leq  \|\chi F_{l}\|_{E}\|\Delta^{l}\varphi^{(\alpha)}\|_{1,\omega}+\|\varsigma_{l}\|_{E}\|\varphi^{(\alpha)}\|_{1,\omega},  \  \   \  \alpha\in \mathbb{N}_{0}^{n},
$$
namely, $E$-norm bounds
\begin{equation}\label{D omega inequality}
\|\varphi\|_{E,N}\leq M'_{l}\|\varphi\|_{1,\omega,N+2l}\ ,  \ \  \ \varphi\in\mathcal{S}(\mathbb{R}^{n}),\ N\in \mathbb{N}_{0}.
\end{equation}
The inequality (\ref{D omega inequality}) will be employed in Section \ref{subsect convolution} to study further properties of $\mathcal{D}_{E}$.
\end{remark}

\subsection{The distribution space $\mathcal{D}'_{E'_{
\ast}}$} \label{subsection DE}

We can now define our new distribution space. We denote by $\mathcal{D}'_{E'_{\ast}}$ the strong dual of $\mathcal{D}_{E}$. When $E$ is reflexive, we write $\mathcal{D}'_{E'}=\mathcal{D}'_{E'_{\ast}}$ in accordance with the last assertion of Theorem \ref{thgoodspace}. In view of Proposition \ref{prop:ime3} and Proposition \ref{smooth prop}, we have the (continuous) inclusions $\mathcal{O}'_{C}(\mathbb{R}^{n})\subset\mathcal{D}'_{E'_{\ast}}\subset \mathcal{S}'(\mathbb{R}^{n})$. In particular, every compactly supported distribution belongs to the space $\mathcal{D}'_{E'_{\ast}}$.

The notation $\mathcal{D}'_{E'_{\ast}}=(\mathcal{D}_{E})'$ is motivated by the next structural theorem, which characterizes the elements of this dual space in two ways, in terms of convolution averages and as sums of derivatives of elements of $E'_{\ast}$ (or $E'$). These characterizations play a fundamental role in our further considerations.

\begin{theorem}\label{karak}
Let $f\in\mathcal{D}'(\mathbb{R}^n)$. The following statements are equivalent:
\begin{itemize}
\item [$(i)$] $f\in \mathcal{D}_{E'_{\ast}}'$.
\item [$(ii)$] $f*\psi\in E'$ for all $\psi\in \mathcal{D}(\mathbb{R}^n)$.
\item [$(iii)$] $f*\psi\in E'_{\ast}$ for all $\psi\in \mathcal{D}(\mathbb{R}^n)$.
\item [$(iv)$] $f$ can be expressed as $f=\sum_{|\beta|\leq N}g_{\beta}^{(\beta)}$, with $g_{\beta}\in E'$.
\item [$(v)$] There are $f_{\alpha}\in E'_{\ast}\cap UC_{\omega}$ such that
\begin{equation}\label{eq:representation}
 f=\sum_{|\alpha|\leq N}f_{\alpha}^{(\alpha)}.
\end{equation}
\end{itemize}
Moreover, if $E$ is reflexive, we may choose $f_{\alpha}\in E'\cap C_{\omega}$.
\end{theorem}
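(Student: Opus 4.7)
The plan is to verify the cycle $(i)\Rightarrow(iv)\Rightarrow(iii)\Rightarrow(ii)\Rightarrow(i)$, together with $(iii)\Rightarrow(v)$ and the trivial $(v)\Rightarrow(iv)$, and to derive the reflexive refinement at the end. For $(i)\Rightarrow(iv)$, continuity of $f$ on $\mathcal{D}_E$ gives $|\langle f,\varphi\rangle|\leq C\|\varphi\|_{E,N}$ for some $N$. The differentiation map $\Phi(\varphi)=(\varphi^{(\alpha)})_{|\alpha|\leq N}$ is an isometric embedding of $\mathcal{D}_E$ into $E^M$ under the maximum norm (with $M$ the number of multi-indices of order at most $N$). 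Hahn--Banach extends $f\circ\Phi^{-1}$ to a continuous functional on $E^M$, and its representation in $(E')^M$ yields $g_\alpha\in E'$ with $\langle f,\varphi\rangle=\sum_{|\alpha|\leq N}\langle g_\alpha,\varphi^{(\alpha)}\rangle$, which is $(iv)$ after absorbing signs. Then $(iv)\Rightarrow(iii)$ is immediate: $f\ast\psi=\sum g_\beta\ast\psi^{(\beta)}$ lies in $L^1_{\check\omega}\ast E'=E'_\ast$ via Corollary \ref{cor3.3} and Definition \ref{goodspace}, since each $\psi^{(\beta)}\in\mathcal{D}\subset L^1_{\check\omega}$. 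The implication $(iii)\Rightarrow(ii)$ follows from $E'_\ast\subseteq E'$.

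For $(ii)\Rightarrow(i)$, I deploy the Schwartz parametrix: choose $\chi\in\mathcal{D}_K$ equal to $1$ near $0$, a fundamental solution $F_l$ of $\Delta^l$, and $\varsigma_l=\Delta^l(\chi F_l)-\delta\in\mathcal{D}(\mathbb{R}^n)$, with $l$ large enough that $\chi F_l\in\mathcal{D}^{m_{K'}}_{K'}$ for the $m_{K'}$ below. Testing against $\varphi\in\mathcal{D}$,
\[
\langle f,\varphi\rangle=\langle f\ast\check\chi\check F_l,\Delta^l\varphi\rangle-\langle f\ast\check\varsigma_l,\varphi\rangle.
\]
The second term is bounded by $\|f\ast\check\varsigma_l\|_{E'}\|\varphi\|_E$ directly from $(ii)$. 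The closed graph theorem, applied to $\psi\mapsto f\ast\psi$ on each $\mathcal{D}_{K'}$ using $(ii)$, yields a constant $C_{K'}$ with $\|f\ast\psi\|_{E'}\leq C_{K'}p_{m_{K'}}(\psi)$. Approximating $\check\chi\check F_l$ by mollifications $\psi_\varepsilon\in\mathcal{D}$ converging in $\mathcal{D}^{m_{K'}}_{K'}$ makes $\{f\ast\psi_\varepsilon\}$ Cauchy in $E'$, whence $f\ast\check\chi\check F_l\in E'$ with bounded $E'$-norm; the first term is then at most $C\|\Delta^l\varphi\|_E\leq C'\|\varphi\|_{E,2l}$. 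Combining, $|\langle f,\varphi\rangle|\leq C''\|\varphi\|_{E,2l}$ on $\mathcal{D}$, which extends by density (Proposition \ref{prop:ime3}) to a continuous functional on $\mathcal{D}_E$.

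For $(iii)\Rightarrow(v)$, reuse the parametrix with $h:=\chi F_l\ast f$ and $f_0:=-\varsigma_l\ast f$, so $f=\sum_{|\alpha|=l}\binom{l}{\alpha}h^{(2\alpha)}+f_0$. The $E'_\ast$-membership of $f_0$ is given by $(iii)$; that of $h$ follows from the closedness of $E'_\ast$ in $E'$ combined with the convergence $\psi_\varepsilon\ast f\to h$ in $E'$ (with $\psi_\varepsilon\ast f\in E'_\ast$ from $(iii)$). For the $UC_\omega$ property, since $(i)$ now holds one has $|\langle f,\phi\rangle|\leq C_f\|\phi\|_{E,N_0}$; representing $h(x)=\lim_\varepsilon\langle f,T_{-x}\check\psi_\varepsilon\rangle$ and $f_0(x)=\langle f,T_{-x}\check\varsigma_l\rangle$, together with $\|T_{-x}\phi\|_{E,N_0}\leq\omega(x)\|\phi\|_{E,N_0}$ and Corollary \ref{molifaer} to bound $\|\check\psi_\varepsilon\|_{E,N_0}$ uniformly, yields $|h(x)|,|f_0(x)|\leq C'\omega(x)$. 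The $L^\infty_\omega$-uniform continuity is inherited from property $(b)$ of $E$ applied to the derivatives of $\check\chi\check F_l$ and $\check\varsigma_l$. Finally, $(v)\Rightarrow(iv)$ is trivial.

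For the reflexive refinement, $E'_\ast=E'$ by Theorem \ref{thgoodspace}, so only the $C_\omega$-upgrade requires work. Invoke the Miyazaki factorization $\varsigma_l=\varsigma_l^1\ast\varsigma_l^2$ in $\mathcal{S}(\mathbb{R}^n)$; then $f_0=-\varsigma_l^1\ast(\varsigma_l^2\ast f)$ with $\varsigma_l^2\ast f\in E'$ by $(iii)$ and $\check\varsigma_l^1\in\mathcal{S}\subset E$, so Proposition \ref{convolution E E'} places $f_0\in C_\omega$. The analogous argument for $\psi_\varepsilon\ast f$ yields $\psi_\varepsilon\ast f\in C_\omega$, and since $\psi_\varepsilon\ast f\to h$ in $\|\cdot\|_{\infty,\omega}$ (using the same uniform Corollary \ref{molifaer} estimate), the closedness of $C_\omega$ in $L^\infty_\omega$ gives $h\in C_\omega$. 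The main obstacle, I expect, is the identification $\chi F_l\ast f\in E'_\ast\cap UC_\omega$: it requires carefully orchestrating the closed-graph continuity of $\psi\mapsto f\ast\psi$, the mollification of the only-finitely-smooth parametrix $\chi F_l$, and the pointwise $\omega(x)$-bound obtained from the $\mathcal{D}_E$-continuity of $f$.
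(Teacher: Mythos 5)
Your proposal is correct, but it follows a genuinely different path from the paper's. The paper proves the single cycle $(v)\Rightarrow(i)\Rightarrow(ii)\Rightarrow(iii)\Rightarrow(iv)\Rightarrow(v)$: it gets $(i)\Rightarrow(ii)$ from the boundedness of $\check{\psi}\ast B_E$ in $\mathcal{D}_E$, it gets $(ii)\Rightarrow(iii)$ by invoking the Rubel--Squires--Taylor factorization $\mathcal{D}=\operatorname{span}(\mathcal{D}\ast\mathcal{D})$, it gets $(iii)\Rightarrow(iv)$ by Banach--Steinhaus plus one parametrix, and it gets $(iv)\Rightarrow(v)$ by a \emph{second} application of the parametrix to each $g_\beta\in E'$, so that the resulting $f_{\beta,\nu}=g_\beta\ast\check{\varrho}_\nu$ land in $E'\ast\check{E}$ and the memberships in $UC_\omega$ (resp.\ $C_\omega$) follow at once from Proposition \ref{convolution E E'}. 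You instead obtain $(i)\Rightarrow(iv)$ directly by Hahn--Banach in $E^M$ (the same device the paper reserves for Proposition \ref{prop:reflexive}), which makes $(iv)\Rightarrow(iii)$ trivial from the module structure and lets you bypass the $\operatorname{span}(\mathcal{D}\ast\mathcal{D})$ factorization entirely; your $(ii)\Rightarrow(i)$ via closed graph on $\psi\mapsto f\ast\psi$ is an equivalent substitute for the paper's Banach--Steinhaus step. The price is paid in $(iii)\Rightarrow(v)$ and the reflexive refinement: because your $h=\chi F_l\ast f$ is a convolution with $f\notin E'$, Proposition \ref{convolution E E'} is not directly applicable and you must rederive the bound $|h(x)|\leq C\omega(x)$, the $\omega$-uniform continuity, and (in the reflexive case) the decay via a Miyazaki factorization of $\varsigma_l$, all by hand from the estimate $|\langle f,\phi\rangle|\leq C_f\|\phi\|_{E,N_0}$; these estimates do go through, but you should state explicitly that $l$ is chosen \emph{after} $N_0$ and the exponent $j$ of Lemma \ref{lemma4.2} are known, so that $(\chi F_l)^{(\alpha)}\in E\cap E'$ for all $|\alpha|\leq N_0$ and the mollifications $\psi_\varepsilon$ converge in $\|\cdot\|_{E,N_0}$. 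In short, your route trades one external factorization theorem for a Hahn--Banach argument and a somewhat more delicate weighted-estimate computation; both are sound.
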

\begin{remark}\label{r1}
One can replace $\mathcal{D}'(\mathbb{R}^n)$ and $\mathcal{D}(\mathbb{R}^n)$ by $\mathcal{S}'(\mathbb{R}^n)$ and $\mathcal{S}(\mathbb{R}^n)$ in the statement of Theorem \ref{karak}. It follows from Theorem \ref{karak}, since $E'\subset \mathcal{D}_{E'_{\ast}}'$,  that every element of $f\in E'$ can be expressed as a sum of partial derivatives of elements of $E'_{\ast}\cap UC_{\omega}$ (or $E'\cap C_{\omega}$ in the reflexive case).
\end{remark}
\begin{proof}
Clearly, $(v)\Rightarrow(i)$. We denote below $B_{E}=\{ \varphi\in\mathcal{D}(\mathbb{R}^n):\: \|\varphi\|_E\leq1\}$.

$(i)\Rightarrow (ii)$. Fix first $\psi\in\mathcal{D}(\mathbb{R}^n)$. By Proposition \ref{cor:Beurling Algebra}, the set $\check{\psi}\ast B_{E}=\{\check{\psi}*\varphi:\: \varphi \in B_{E}\}$ is bounded in $\mathcal{D}_{E}$.

Hence, $|\langle f*\psi,\varphi\rangle|=|\langle f,\check{\psi}*\varphi\rangle|<M_{\psi}$ for $\varphi\in B_{E}$.
So, $|\langle f*\psi,\varphi\rangle|<M_{\psi}\|\varphi\|_E,$ for all $\varphi\in\mathcal{D}(\mathbb{R}^{n}).$
Using the fact that $\mathcal{D}(\mathbb{R}^{n})$ is dense in $E$, the last inequality means that $f*\psi\in E'$, for every $\psi\in \mathcal{D}(\mathbb{R}^{n})$.

$(ii)\Rightarrow (iii)$. We use the factorization property of $\mathcal{D}(\mathbb{R}^{n})$ from \cite{r-s-t} to write $\psi=\psi_{1}\ast\phi_{1}+\psi_{2}\ast\phi_{2}+\dots+\psi_{N}\ast\phi_{N}\in \mathcal{D}(\mathbb{R}^{n})$ with $\psi_{j},\phi_{j}\in \mathcal{D}(\mathbb{R}^{n})$. From $(ii)$, we conclude $f\ast \psi= (f\ast\psi_{1})\ast\phi_{1}+\dots+(f\ast\psi_{N})\ast\phi_{N}\in \operatorname*{span}(E'\ast \mathcal{D}(\mathbb{R}^{n}))\subset E'_{\ast},$ for any $\psi\in \mathcal{D}(\mathbb{R}^{n})$.

$(iii)\Rightarrow(iv)$.  Let $\psi\in\mathcal{D}(\mathbb{R}^n)$ be arbitrary. Because $\langle f*\check{\varphi},\check{\psi}\rangle=\langle f*\psi,\varphi\rangle$ we get that the set $\{\langle f*\check{\varphi},\check{\psi}\rangle:\: \varphi\in B_{E}\}$ is bounded in $\mathbb{C}$. The Banach-Steinhaus theorem implies that $
\{ f*\check{\varphi}:\:\varphi\in B_{E}\}$
is an equicontinuous subset of $\mathcal{D}'(\mathbb{R}^n)$. Namely,
for any compact set $K\subset \mathbb{R}^n$ there exist $N=N_{K}\in \mathbb{N}_0$ and $M=M_{K}>0$ such that $|\langle f*\rho,\varphi\rangle|<Mp_{N}(\rho)$ for every $\varphi\in B_{E}$ and $\rho\in \mathcal{D}_K^{N}$. Hence, for all $\rho\in \mathcal{D}_K ^N$ we have $ f\ast \rho \in E'$.

Let $K$, $\chi\in \mathcal{D}_K$ and $F_{l}$ be as in the proof of Proposition \ref{smooth prop}. Then $\chi F_{l}\in \mathcal{D}_K^{N}$ for sufficiently large $l$ so that the parametrix formula (\ref{eq:6}) yields
$f\in\triangle^l(E')+E'\subseteq \mathcal{D}_{E'_{\ast}}'$. In particular, one obtains the representation
\begin{equation}
\label{eq:6.1} f=\sum_{|\beta|\leq 2l}g_{\beta}^{(\beta)},  \  \  \ g_{\beta}\in E'.
\end{equation}

$(iv)\Rightarrow(v)$. In order to improve the representation (\ref{eq:6.1}) to the one stated in $(v)$, we apply the parametrix method again to each $g_{\beta}\in E'$, $|\beta|\leq 2l$. Let $K$ be a compact symmetric set as above. By Lemma \ref{lemma4.2}, one can find $j=j_{K}$ such that $\mathcal{D}^{j}_{K}\subset E$. Choosing $l'$ so large that $\chi F_{l'}\in \mathcal{D}_K^{j}$, the parametrix formula (\ref{eq:6}) yields
\begin{equation}
\label{eq:6.2}
g_{\beta}=\sum_{|\nu|\leq 2l'} (f_{\beta,\nu})^{(\nu)},
\end{equation}
where each $f_{\beta,\nu}\in L^1_{\check{\omega}}\ast E'\subset E'_{\ast}$. Furthermore, each $f_{\beta,\nu}$ is of the form $f_{\beta,\nu}=g_{\beta}\ast \check{\varrho_{\nu}}$ with ${\varrho}_{\nu}\in \mathcal{D}^{j}_{K}\subset E$. By Proposition \ref{convolution E E'}, we have $f_{\beta,\nu}\in UC_{\omega}$ (resp., $C_{\omega}$ in the reflexive case).
\end{proof}

We have the following interesting corollary.

\begin{corollary}\label{convolution corollary} Let $L:\mathcal{D}(\mathbb{R}^{n})\rightarrow E'_{\sigma(E',E)}$ be a continuous linear mapping, where $\sigma(E',E)$ stands for the weak$^{\ast}$ topology on $E'$. If $L$ commutes with every translation, i.e.,
\begin{equation}
\label{eq:7}
L(T_{h}\varphi) = T_{h}(L(\varphi)), \ \ \ \mbox{ for all } h\in\mathbb{R}^{n} \mbox{ and } \varphi\in \mathcal{D}(\mathbb{R}^{n}),
\end{equation}
then, there exists $f\in \mathcal{D}_{E'_{\ast}}'$ such that $L$ is of the form
\begin{equation}
\label{eq:8}
L(\varphi)=f\ast \varphi, \ \ \ \varphi\in\mathcal{D}(\mathbb{R}^{n}).
\end{equation}
\end{corollary}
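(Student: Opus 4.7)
My plan is to identify $L$ as a convolution operator via the Schwartz kernel theorem, then use the translation invariance to recognize the kernel as a convolution kernel, and finally apply Theorem~\ref{karak} to place the resulting $f$ in $\mathcal{D}'_{E'_{\ast}}$.

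First, I would verify that the bilinear form $B(\varphi,\psi):=\langle L(\varphi),\psi\rangle$ on $\mathcal{D}(\mathbb{R}^n)\times\mathcal{D}(\mathbb{R}^n)$ is separately continuous: for fixed $\psi\in\mathcal{D}(\mathbb{R}^n)\subset E$, continuity in $\varphi$ is exactly the $\sigma(E',E)$-continuity hypothesis on $L$; for fixed $\varphi$, continuity in $\psi$ follows from $L(\varphi)\in E'\hookrightarrow\mathcal{D}'(\mathbb{R}^n)$. Since $\mathcal{D}(\mathbb{R}^n)$ is nuclear, the Schwartz kernel theorem then supplies a unique $K\in\mathcal{D}'(\mathbb{R}^{2n})$ with $B(\varphi,\psi)=\langle K,\psi\otimes\varphi\rangle$.

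Next, I would exploit the translation invariance (\ref{eq:7}) to recognize $K(x,y)=f(x-y)$ for some $f\in\mathcal{D}'(\mathbb{R}^n)$. Pairing the identity $L(T_h\varphi)=T_h L(\varphi)$ gives $\langle K,\psi\otimes T_h\varphi\rangle=\langle K,T_{-h}\psi\otimes\varphi\rangle$, which after a change of variables in $\mathbb{R}^{2n}$ translates into the diagonal invariance $K(x+h,y+h)=K(x,y)$ for every $h\in\mathbb{R}^n$; this standardly forces $K(x,y)=f(x-y)$ for a unique $f\in\mathcal{D}'(\mathbb{R}^n)$ (verified by pushing $K$ forward along the linear diffeomorphism $(x,y)\mapsto(x-y,y)$ and observing that the resulting distribution is independent of the second variable). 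Unwinding the pairing,
\[
\langle L(\varphi),\psi\rangle=\iint f(x-y)\psi(x)\varphi(y)\,dx\,dy=\langle f\ast\varphi,\psi\rangle
\]
for every $\psi\in\mathcal{D}(\mathbb{R}^n)$, so $L(\varphi)=f\ast\varphi$ in $\mathcal{D}'(\mathbb{R}^n)$, and hence in $E'$, giving (\ref{eq:8}).

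To conclude, since $f\ast\varphi=L(\varphi)\in E'$ for every $\varphi\in\mathcal{D}(\mathbb{R}^n)$, the implication $(ii)\Rightarrow(i)$ of Theorem~\ref{karak} places $f\in\mathcal{D}'_{E'_{\ast}}$, as required. The main obstacle I expect is the second step, namely the passage from the diagonal translation invariance $K(x+h,y+h)=K(x,y)$ to the factorization $K(x,y)=f(x-y)$; although standard, this reduction requires some care with the change of variables in $\mathbb{R}^{2n}$ and a brief justification that the pushforward distribution which is translation invariant in one variable must factor as $f\otimes 1$. The other ingredients—the Schwartz kernel theorem for nuclear spaces and the application of Theorem~\ref{karak}—function essentially as black boxes once the setup is in place.
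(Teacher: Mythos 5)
Your proof is correct and follows essentially the same route as the paper: identify $L$ as convolution with a fixed $f\in\mathcal{D}'(\mathbb{R}^n)$, and then invoke the implication $(ii)\Rightarrow(i)$ of Theorem~\ref{karak} using $f\ast\varphi=L(\varphi)\in E'$. The only difference is that the paper simply cites the classical representation theorem for translation-commuting continuous maps $\mathcal{D}(\mathbb{R}^n)\to\mathcal{D}'(\mathbb{R}^n)$ (Schaefer, Thm.~5.11.3), whereas you reprove that black box via the Schwartz kernel theorem and the diagonal invariance $K(x+h,y+h)=K(x,y)$.
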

\begin{proof}
Since $E'_{\sigma(E',E)}\to\mathcal{D}'(\mathbb{R}^{n})$ is continuous, we obtain that $L: \mathcal{D}(\mathbb{R}^{n})\to \mathcal{D}'(\mathbb{R}^{n})$ is also a continuous linear mapping. Due to the fact that $L$ commutes with every translation, it follows from a well-known theorem (cf. \cite[Thm. 5.11.3, p. 332]{Sch}) that there exists $f\in\mathcal{D}'(\mathbb{R}^{n})$ such that $L(\varphi)=f\ast \varphi\in E'$, for every $\varphi\in\mathcal{D}(\mathbb{R}^{n})$. Theorem \ref{karak} yields $f\in\mathcal{D}'_{E'_{\ast}}$.
\end{proof}

One can readily adapt the proof of Theorem \ref{karak} to show the following characterizations of bounded subsets and convergent sequences of $\mathcal{D}_{E'_{\ast}}'$.

\begin{corollary}
\label{cor:bounded} The following properties are equivalent:
\begin{itemize}
\item [$(i)$] $B'$ is a bounded subset of $\mathcal{D}_{E'_{\ast}}'$.
\item [$(ii)$] $\psi \ast B'=\{\psi \ast f:\: f\in B'\}$ is a bounded subset of $E'$ $($equiv. of $E'_{\ast}$$)$ for each $\psi\in \mathcal{D}(\mathbb{R}^n)$.
\item [$(iii)$] There are $M>0$ and $N\in\mathbb{N}$ such that every $f\in B'$ admits a representation (\ref{eq:representation}) with continuous functions $f_{\alpha}\in E'_{\ast}\cap UC_{\omega}$ satisfying the uniform bounds $||f_{\alpha}||_{E'}<M$ and $||f_{\alpha}||_{\infty,\omega}<M$ (if $E$ is reflexive, one may choose $f_{\alpha}\in E'\cap C_{\omega}$).
\end{itemize}

\end{corollary}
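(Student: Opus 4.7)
My plan is to establish the implications $(iii)\Rightarrow(i)\Rightarrow(ii)\Rightarrow(iii)$, mirroring the proof of Theorem \ref{karak} but keeping all constants uniform over $f\in B'$. The direction $(iii)\Rightarrow(i)$ is immediate: if $f=\sum_{|\alpha|\leq N}f_{\alpha}^{(\alpha)}$ with $\|f_{\alpha}\|_{E'}<M$, then for every $\varphi\in\mathcal{D}_{E}$,
$$
|\langle f,\varphi\rangle|\leq \sum_{|\alpha|\leq N}\|f_{\alpha}\|_{E'}\|\varphi^{(\alpha)}\|_{E}\leq M\,c_{N}\,\|\varphi\|_{E,N},
$$
which yields equicontinuity of $B'$ on $\mathcal{D}_{E}$ and hence boundedness in $\mathcal{D}'_{E'_{\ast}}$.

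For $(i)\Rightarrow(ii)$, I would fix $\psi\in\mathcal{D}(\mathbb{R}^{n})$ and note that by Proposition \ref{prop:ime3} the set $\check{\psi}\ast B_{E}=\{\check{\psi}\ast\varphi:\varphi\in B_{E}\}$ is bounded in $\mathcal{D}_{E}$, since $(\check{\psi}\ast\varphi)^{(\alpha)}=\check{\psi}^{(\alpha)}\ast\varphi$ has $E$-norm at most $\|\check{\psi}^{(\alpha)}\|_{1,\omega}$. Boundedness of $B'$ in the strong dual then gives $\sup_{f\in B',\varphi\in B_{E}}|\langle f\ast\psi,\varphi\rangle|=\sup|\langle f,\check{\psi}\ast\varphi\rangle|<\infty$, and density of $\mathcal{D}(\mathbb{R}^{n})$ in $E$ upgrades this to uniform boundedness of $\{f\ast\psi:f\in B'\}$ in $E'$. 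Using the factorization $\psi=\sum_{i}\psi_{i}\ast\phi_{i}$ from \cite{r-s-t}, one rewrites $f\ast\psi=\sum_{i}(f\ast\psi_{i})\ast\phi_{i}\in L^{1}_{\check{\omega}}\ast E'=E'_{\ast}$, and the $E'_{\ast}$-norm (which coincides with the $E'$-norm on the closed subspace $E'_{\ast}$) inherits the uniform bound.

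The technical heart is $(ii)\Rightarrow(iii)$. Assuming (ii), the scalars $\langle f\ast\check{\varphi},\check{\psi}\rangle=\langle f\ast\psi,\varphi\rangle$ stay bounded uniformly over $f\in B',\varphi\in B_{E}$ for each fixed $\psi\in\mathcal{D}(\mathbb{R}^{n})$. The Banach--Steinhaus theorem applied in $\mathcal{D}'(\mathbb{R}^{n})$ then gives equicontinuity of the family $\{f\ast\check{\varphi}:f\in B',\varphi\in B_{E}\}$: for each compact $K$ there exist $N_{K}\in\mathbb{N}_{0}$ and $M_{K}>0$ such that $|\langle f\ast\check{\varphi},h\rangle|\leq M_{K}p_{N_{K}}(h)$ for $h\in\mathcal{D}_{K}^{N_{K}}$. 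Substituting $h\mapsto\check{h}$ and using the identity $\langle f\ast\check{\varphi},\check{h}\rangle=\langle f\ast h,\varphi\rangle$, this rewrites as the uniform estimate
$$
\|f\ast\rho\|_{E'}\leq M_{K}\,p_{N_{K}}(\rho),\qquad \rho\in\mathcal{D}_{K}^{N_{K}},\ f\in B'.
$$
Next, I would choose a symmetric compact neighborhood $K$ of $0$, $\chi\in\mathcal{D}_{K}$ equal to $1$ near $0$, and $l$ large enough that $\chi F_{l}\in\mathcal{D}_{K}^{N_{K}}$ (where $\Delta^{l}F_{l}=\delta$). The parametrix formula $f=\Delta^{l}((\chi F_{l})\ast f)-\varsigma_{l}\ast f$ then produces a first representation $f=\sum_{|\beta|\leq 2l}g_{f,\beta}^{(\beta)}$ with each $\|g_{f,\beta}\|_{E'}$ uniformly bounded in $f\in B'$.

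The final step iterates the parametrix on each $g_{f,\beta}$, as in the step $(iv)\Rightarrow(v)$ of Theorem \ref{karak}. By Lemma \ref{lemma4.2}, for $l'$ sufficiently large we have $\chi F_{l'}\in\mathcal{D}_{K}^{j}\subset E$, yielding $g_{f,\beta}=\sum_{|\nu|\leq 2l'}f_{f,\beta,\nu}^{(\nu)}$ with $f_{f,\beta,\nu}=g_{f,\beta}\ast\check{\varrho}_{\nu}$ for fixed $\varrho_{\nu}\in\mathcal{D}_{K}^{j}\subset E$ independent of $f$. Proposition \ref{convolution E E'} ensures $f_{f,\beta,\nu}\in UC_{\omega}$ (or $C_{\omega}$ when $E$ is reflexive), and its proof delivers $\|f_{f,\beta,\nu}\|_{\infty,\omega}\leq\|g_{f,\beta}\|_{E'}\|\varrho_{\nu}\|_{E}$; the compact support of $\check{\varrho}_{\nu}$ guarantees $\check{\varrho}_{\nu}\in L^{1}_{\check{\omega}}$, so also $\|f_{f,\beta,\nu}\|_{E'}\leq\|\check{\varrho}_{\nu}\|_{1,\check{\omega}}\|g_{f,\beta}\|_{E'}$. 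Both bounds are uniform in $f\in B'$, and after re-indexing $(\beta,\nu)\leadsto\alpha$ we obtain the representation required in (iii) with a single $N=2l+2l'$. I expect the main nuisance to be the bookkeeping across the two parametrix iterations, namely verifying that $K$, $N_{K}$, $l$, $l'$, $j$, and the constants can indeed all be fixed once for all $f\in B'$ — this is the sole content actually delivered by the Banach--Steinhaus step.
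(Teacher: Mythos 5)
Your proposal is correct and follows essentially the route the paper intends: the paper proves this corollary by ``adapting the proof of Theorem \ref{karak}'', and your argument is exactly that adaptation --- Banach--Steinhaus applied to the pointwise-bounded family $\{f\ast\check{\varphi}: f\in B',\ \varphi\in B_{E}\}$ to obtain a uniform seminorm estimate, followed by the same two-stage parametrix iteration with all parameters $K$, $N_K$, $l$, $l'$, $j$ fixed independently of $f$. This matches the strategy the paper writes out explicitly for the sequential analogue (Corollary \ref{cor:sequence}), so no further comparison is needed.
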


\begin{corollary}
\label{cor:sequence} Let $\left\{f_{j}\right\}_{j=0}^{\infty}\subset\mathcal{D}_{E'_{\ast}}'$ $($or similarly, a filter with a countable or bounded basis$)$. The following three statements are equivalent:
\begin{itemize}
\item [$(i)$]  $\left\{f_{j}\right\}_{j=0}^{\infty}$ is (strongly) convergent in $\mathcal{D}_{E'_{\ast}}'$.
\item [$(ii)$] $\left\{f_{j}\ast \psi\right\}_{j=0}^{\infty}$ is convergent in $E'$ $($equiv. in $E'_{\ast}$$)$  for all $\psi\in\mathcal{D}(\mathbb{R}^{n})$.
\item [$(iii)$] There are $N\in\mathbb{N}$ and continuous functions $f_{\alpha,j}\in E'_{\ast}\cap UC_{\omega}$ such that $f_j=\sum_{|\alpha|\leq N} f^{(\alpha)}_{\alpha,j}$ and the sequences $\left\{f_{\alpha,j}\right\}_{j=0}^{\infty}$ are convergent in both $E'_{\ast}$ and $L^{\infty}_{\omega}$ (if $E$ is reflexive one may choose $f_{\alpha,j}\in E'\cap C_{\omega}$).
\end{itemize}
Concerning weak$\:^{\ast}$ convergence of sequences, the following three properties are equivalent:
\begin{itemize}
\item [$(i)^{\ast}$]  $\left\{f_{j}\right\}_{j=0}^{\infty}$ is weakly$\:^{\ast}$ convergent in $\mathcal{D}_{E'_{\ast}}'$.
\item [$(ii)^{\ast}$]  $\left\{f_{j}\ast \psi\right\}_{j=0}^{\infty}$ is weakly$\:^{\ast}$ convergent in $E'$ $($equiv. in $(E'_{\ast})_{\sigma(E'_{\ast},E)}$$)$  for all $\psi\in\mathcal{D}(\mathbb{R}^{n})$.

\item [$(iii)^{\ast}$] There are $N\in\mathbb{N}$ and continuous functions $f_{\alpha,j}\in E'_{\ast}\cap UC_{\omega}$ such that $f_j=\sum_{|\alpha|\leq N} f^{(\alpha)}_{\alpha,j}$, the sequences $\left\{f_{\alpha,j}\right\}_{j=0}^{\infty}$ are uniformly convergent over compacts of $\mathbb{R}^{n}$, and the norms $||f_{\alpha,j}||_{E'}$ and $||f_{\alpha,j}||_{\infty,\omega}$ remain uniformly bounded (if $E$ is reflexive, one may choose $f_{\alpha,j}\in E'\cap C_{\omega}$).
\end{itemize}
\end{corollary}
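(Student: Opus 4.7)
The plan is to adapt the $(v)\Rightarrow(i)\Rightarrow(ii)\Rightarrow(iii)\Rightarrow(iv)\Rightarrow(v)$ cycle of Theorem \ref{karak}, running $(iii)\Rightarrow(i)\Rightarrow(ii)\Rightarrow(iii)$ for sequences (and analogously for filters with a countable or bounded basis) while tracking the mode of convergence at each step. For $(iii)\Rightarrow(i)$, given $f_j=\sum_{|\alpha|\le N} f_{\alpha,j}^{(\alpha)}$ with $\{f_{\alpha,j}\}$ Cauchy in $E'_\ast$, I pair against a bounded $B\subset\mathcal{D}_E$: since $\{\varphi^{(\alpha)}:\varphi\in B\}$ is bounded in $E$ for each $|\alpha|\le N$, the identity $\langle f_j-f_k,\varphi\rangle=\sum(-1)^{|\alpha|}\langle f_{\alpha,j}-f_{\alpha,k},\varphi^{(\alpha)}\rangle$ forces uniform convergence on $B$, i.e.\ strong convergence in $\mathcal{D}'_{E'_\ast}$. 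For $(i)\Rightarrow(ii)$, observe that for fixed $\psi\in\mathcal{D}(\mathbb{R}^n)$ the set $\{\check\psi\ast\varphi:\varphi\in\mathcal{D},\;\|\varphi\|_E\le 1\}$ is bounded in $\mathcal{D}_E$ by (\ref{eq5}); hence strong convergence of $\{f_j\}$, combined with the density of $\mathcal{D}$ in $E$, gives $\|f_j\ast\psi-f_k\ast\psi\|_{E'}\to 0$. To upgrade $E'$-convergence to $E'_\ast$-convergence, I would factor $\psi=\sum_i \psi_i\ast\phi_i$ with $\psi_i,\phi_i\in\mathcal{D}$, write $f_j\ast\psi=\sum_i(f_j\ast\psi_i)\ast\phi_i$, and use the estimate $\|u\ast\phi_i\|_{E'}\le\|\phi_i\|_{1,\check\omega}\|u\|_{E'}$ from Corollary \ref{cor3.3}: each summand lives in $L^1_{\check\omega}\ast E'=E'_\ast$, and the sum is Cauchy in the closed subspace $E'_\ast$.

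The principal step is $(ii)\Rightarrow(iii)$. As in the $(iii)\Rightarrow(iv)$ step of Theorem \ref{karak}, Banach--Steinhaus applied to the pointwise-bounded family $\{\psi\mapsto f_j\ast\psi\}$ on each $\mathcal{D}_K$ produces $N=N_K$ and $M=M_K$ with $\|f_j\ast\rho\|_{E'}\le M p_N(\rho)$ uniformly in $j$ for $\rho\in\mathcal{D}_K$; a mollification/diagonal argument extends this equicontinuity to $\mathcal{D}_K^N$ and transfers Cauchyness of $\{f_j\ast\rho\}_j$ in $E'$ to the whole of $\mathcal{D}_K^N$. Choosing $l$ so large that $\chi F_l\in\mathcal{D}_K^N$, the parametrix formula (\ref{eq:6}) yields $f_j=\sum_{|\beta|\le 2l}g_{\beta,j}^{(\beta)}$ with $g_{\beta,j}\to g_\beta$ in $E'$. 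A second parametrix applied to each $g_{\beta,j}\in E'$, with $l'$ chosen so that $\chi F_{l'}$ lies in the $\mathcal{D}_K^m\subset E\cap E'_\ast$ supplied by Lemma \ref{lemma4.2}, writes $g_{\beta,j}=\sum_{|\nu|\le 2l'}f_{\beta,\nu,j}^{(\nu)}$ where $f_{\beta,\nu,j}=g_{\beta,j}\ast\check\varrho_\nu$ and each $\varrho_\nu\in\mathcal{D}_K^m$. Since $\check\varrho_\nu$ is simultaneously a compactly supported function in $L^1_{\check\omega}$ and a member of $E$, Corollary \ref{cor3.3} delivers convergence of $f_{\beta,\nu,j}$ in $E'$ (hence in $E'_\ast$), while Proposition \ref{convolution E E'} delivers convergence in $UC_\omega\subset L^\infty_\omega$; in the reflexive case the same proposition lands $f_{\beta,\nu,j}\in E'\cap C_\omega$. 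Regrouping indices produces the representation in (iii).

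The weak$^\ast$ equivalences $(i)^\ast\Leftrightarrow(ii)^\ast\Leftrightarrow(iii)^\ast$ proceed along identical lines. The single new ingredient is the Banach--Steinhaus theorem applied to sequences or filters with countable/bounded basis in the strong dual of the Fr\'{e}chet space $\mathcal{D}_E$, which supplies a uniform norm bound on $\{f_j\}$ in $\mathcal{D}'_{E'_\ast}$; combined with the density of $\mathcal{D}$ in $E$ this promotes the pointwise-on-$\mathcal{D}$ weak$^\ast$ convergence of $\{f_j\ast\psi\}$ to weak$^\ast$ convergence in $E'$. Uniform convergence on compacts of the $f_{\alpha,j}$ then replaces $L^\infty_\omega$-norm convergence at the conclusion of the double parametrix, since that is precisely what Proposition \ref{convolution E E'} converts weak$^\ast$ convergence in $E'$ into. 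The step I expect to require the most care is the bookkeeping in the double parametrix: the $\varrho_\nu$ must be fixed once and for all, so that the assignment $g_{\beta,j}\mapsto f_{\beta,\nu,j}$ is a single linear operator continuous simultaneously in $\|\cdot\|_{E'}$ and $\|\cdot\|_{\infty,\omega}$, which is what lets one extract the two independent convergence statements of (iii) from the single $E'$-convergence of $\{g_{\beta,j}\}$.
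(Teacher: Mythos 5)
Your proposal is correct and follows essentially the same route as the paper: the authors likewise only detail $(ii)\Rightarrow(iii)$ (and its weak$^{\ast}$ analogue), obtaining via Banach--Steinhaus the convergence of the maps $\psi\mapsto\psi\ast f_{j}$ on some $\mathcal{D}^{N}_{K}$ and then applying the parametrix formula (\ref{eq:6}) twice, exactly as you do, with the remaining implications left to the reader along the lines you sketch. Your extra care about fixing the $\varrho_{\nu}$ once and for all so that $g_{\beta,j}\mapsto g_{\beta,j}\ast\check{\varrho}_{\nu}$ is a single operator continuous into both $E'_{\ast}$ and $UC_{\omega}$ is a point the paper leaves implicit but is the right way to make the argument airtight.
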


\begin{proof}
We only show that $(ii)$ in Corollary \ref{cor:sequence} implies $(iii)$ (resp., $(ii)^{\ast}$ implies $(iii)^{\ast}$), the rest is left to the reader. Let $K\subset \mathbb{R}^{n}$ be a compact symmetric neighborhood of the origin. To each $f_{j}$, we associate the continuous linear mapping $L_{j}:\mathcal{D}(\mathbb{R}^{n})\to E'$ given by $L_{j}(\psi)=\psi\ast f_{j}$, $\psi\in\mathcal{D}(\mathbb{R}^{n})$. The condition $(ii)$ and the Banach-Steinhaus theorem imply that the sequence $\left\{L_{j}\right\}_{j=0}^{\infty}$ is convergent in the space of continuous linear mappings $L_{b}(\mathcal{D}(\mathbb{R}^{n}),E')$ (resp., in $L_{b}(\mathcal{D}(\mathbb{R}^{n}),E'_{\sigma(E',E)})$), provided with the strong topology. Thus, there exists $N$ such that $\left\{L_{j}\right\}_{j=0}^{\infty}$ converges in $L_{b}(\mathcal{D}^{N}_{K},E')$ (resp., in $L_{b}(\mathcal{D}^{N}_{K},E'_{\sigma(E',E)})$); in particular, $\left\{\psi\ast f_{j}\right\}_{j=0}^{\infty}$ converges in $E'$ (resp., weakly* $E'$) for each fixed $\psi\in \mathcal{D}^{N}_{K}$. If we take $l$ as in the proof of Theorem \ref{karak}, the representation (\ref{eq:6}) gives $f_{j}=\sum_{|\beta|\leq 2l}g^{(\beta)}_{\beta,j}$ with each $\left\{g_{\beta,j}\right\}_{j=0}^{\infty}$ convergent in $E'$ (resp., weakly* convergent in $E'$) because
$\left\{\varsigma_{l}*f_{j}\right\}_{j=0}^{\infty}$ and $\left\{(\chi F_{l})\ast f_{j}\right\}_{j=0}^{\infty}$ are then convergent in $E'$ (resp., weakly* convergent in $E'$). Likewise, another application of the parametrix method, as in the proof of Theorem \ref{karak}, allows us to replace the sequences $\left\{g_{\beta,j}\right\}_{j=0}^{\infty}$ by sequences $\left\{f_{\alpha,j}\right\}_{j=0}^{\infty}$ having the claimed properties.
\end{proof}

Observe that Corollaries \ref{cor:bounded} and \ref{cor:sequence} are still valid if  $\mathcal{D}(\mathbb{R}^{n})$ is replaced by $\mathcal{S}(\mathbb{R}^{n})$.

When $E$ is reflexive, the space $\mathcal{D}_{E}$ is also reflexive. Furthermore, we have:

\begin{proposition}\label{prop:reflexive} If $E$ is reflexive, then $\mathcal{D}_{E}$ is an FS$^{\ast}$-space and $\mathcal{D}'_{E'}$ is a DFS$^{\ast}$-space. In addition, $\mathcal{S}(\mathbb{R}^{n})$ is dense in $\mathcal{D}'_{E'}$.
\end{proposition}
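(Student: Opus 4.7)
The plan is to realize $\mathcal{D}_E$ as a countable reduced projective limit of Banach spaces whose linking maps are weakly compact once $E$ is reflexive. For each $N\in\mathbb{N}_0$ I would set $\mathcal{D}_E^N:=\{\varphi\in\mathcal{S}'(\mathbb{R}^n): \varphi^{(\alpha)}\in E\text{ for every }|\alpha|\leq N\}$, normed by $\|\cdot\|_{E,N}$. The map $\varphi\mapsto(\varphi^{(\alpha)})_{|\alpha|\leq N}$ identifies $\mathcal{D}_E^N$ isometrically with a linear subspace of the product Banach space $E^{M_N}$, where $M_N=\#\{\alpha\in\mathbb{N}_0^n: |\alpha|\leq N\}$; this subspace is closed because $E\hookrightarrow\mathcal{S}'(\mathbb{R}^n)$ is continuous and differentiation is continuous on $\mathcal{S}'(\mathbb{R}^n)$. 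Hence each $\mathcal{D}_E^N$ is a Banach space and, by construction, $\mathcal{D}_E=\varprojlim_N \mathcal{D}_E^N$ as locally convex spaces.

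Assume now that $E$ is reflexive. Then $E^{M_N}$ is reflexive, and therefore so is its closed subspace $\mathcal{D}_E^N$. In a reflexive Banach space every bounded set is relatively weakly compact, so every bounded subset of $\mathcal{D}_E^{N+1}$ is relatively weakly compact in $\mathcal{D}_E^N$ via the continuous inclusion. This is precisely the weak compactness of the linking map $\mathcal{D}_E^{N+1}\to\mathcal{D}_E^N$, which by Komatsu's definition~\cite{komatsu67} means that $\mathcal{D}_E$ is an $FS^{\ast}$ space; its strong dual $\mathcal{D}'_{E'}$ is then a $DFS^{\ast}$ space, and both spaces are reflexive.

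For the density of $\mathcal{S}(\mathbb{R}^n)$ in $\mathcal{D}'_{E'}$, I would invoke Hahn--Banach. Because $\mathcal{D}'_{E'}$ is reflexive, the weak and strong closures of convex subsets coincide, so it suffices to verify weak density. Suppose $\varphi\in\mathcal{D}_E=(\mathcal{D}'_{E'})'$ satisfies $\langle\psi,\varphi\rangle=0$ for every $\psi\in\mathcal{S}(\mathbb{R}^n)\subset\mathcal{O}'_C(\mathbb{R}^n)\subset\mathcal{D}'_{E'}$; since $\mathcal{D}_E\hookrightarrow\mathcal{S}'(\mathbb{R}^n)$ by Proposition~\ref{prop:ime3}, this says exactly that $\varphi$ vanishes as a tempered distribution, hence $\varphi=0$ in $\mathcal{D}_E$. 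The only delicate point of the proof is the identification of $\mathcal{D}_E^N$ as a closed subspace of $E^{M_N}$; once this is observed, reflexivity of the layers is automatic, the weak compactness of the linking maps is immediate, and the density assertion is a one-line Hahn--Banach argument.
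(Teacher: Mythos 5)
Your proof is correct and takes essentially the same route as the paper: realize $\mathcal{D}_E$ as the projective limit of the Banach spaces $\mathcal{D}_E^N$, establish their reflexivity (the paper does this via the Hahn--Banach description of $(\mathcal{D}_E^N)'$ as sums $\sum_{|\alpha|\leq N}f_\alpha^{(\alpha)}$ with $f_\alpha\in E'$, which is exactly the dual formulation of your identification of $\mathcal{D}_E^N$ with a closed subspace of $E^{M_N}$), deduce weak compactness of the linking maps, and obtain the density of $\mathcal{S}(\mathbb{R}^n)$ from reflexivity plus a Hahn--Banach annihilator argument. No gaps.
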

\begin{proof} Let $\mathcal{D}_{E}^{N}$ be the Banach space of distributions such that $\varphi^{(\alpha)}\in E$ for $|\alpha|\leq N$ provided with the norm $||\: \:||_{E,N}$ (cf. (\ref{eq4})). We then have the projective sequence
\begin{equation}\label{projective}
E\leftarrow\mathcal{D}_{E}^{1}\leftarrow\cdots\leftarrow\mathcal{D}_E^{N}\leftarrow\mathcal{D}_E^{N+1}\leftarrow\dots\leftarrow\mathcal{D}_E,
\end{equation}
where clearly $\mathcal{D}_E=\mathrm{proj} \lim_{N} \mathcal{D}_E^{N}$. Using the Hahn-Banach theorem, one readily sees that every $f\in(\mathcal{D}^{N}_{E})'$ is of the form $f=\sum_{|\alpha|\leq N}f^{(\alpha)}_{\alpha}$, with $f_{\alpha}\in E'$. Thus, each Banach space $\mathcal{D}_{E}^{N}$ is reflexive, or equivalently its closed unit ball is weakly compact. The latter implies that every injection in the projective sequence (\ref{projective}) is weakly compact. This implies all the assertions.
\end{proof}

It should be noticed that the convolution of $f\in\mathcal{D}'_{E'_{\ast}}$ and $u\in L^{1}_{\check\omega}$, defined  as  $\left\langle u\ast f, \varphi \right\rangle:=\left\langle f, \check{u}\ast \varphi\right\rangle$,  $\varphi\in \mathcal{D}_{E}$, gives rise to a continuous bilinear mapping $\ast: L^{1}_{\check{\omega}}\times \mathcal{D}'_{E'_{\ast}}\to \mathcal{D}'_{E'_{\ast}}$, as follows from  (\ref{eq5}).
We will show in Section \ref{subsect convolution} that the convolution of elements of $\mathcal{D}_{E'_{\ast}}'$ can be defined with distributions in a larger class than $L^{1}_{\check{\omega}}$, namely, with elements of the space $\mathcal{D}'_{L^{1}_{\check{\omega}}}$ to be introduced in Section \ref{examples}.

%
\section{Examples: The $\eta$-weighted spaces $L^{p}_{\eta}$, $\mathcal{D}_{L^{p}_{\eta}}$, and $\mathcal{D}'_{L^{p}_{\eta}}$}
\label{examples}
In this subsection we discuss some important examples of the spaces $\mathcal{D}_{E}$ and $\mathcal{D}_{E'_{\ast}}'$. They generalize the familiar Schwartz spaces $\mathcal{D}_{L^{p}}$ and $\mathcal{D}'_{L^{p}}$. These particular instances are useful for studying properties of the general $\mathcal{D}_{E'_{\ast}}'$(cf. Subsection \ref{subsect convolution}).

Let $\eta$ be a polynomially bounded weight, that is, a measurable function $\eta:\mathbb{R}^n\rightarrow (0,\infty)$ that fulfills the requirement $\eta(x+h)\leq M\eta(x)(1+|h|)^\tau$, for some $M,\tau>0$. We consider the norms
$$
||g||_{p,\eta}=\left(\int_{\mathbb{R}^{n}}|g(x)\eta(x)|^{p}dx\right)^{\frac{1}{p}} \mbox{for }p\in [1,\infty) \ \ \mbox{ and }\ \ ||g||_{\infty,\eta}=\operatorname*{ess}\sup_{x\in\mathbb{R}^{n}} \frac{|g(x)|}{\eta(x)}\ .
$$
Then the space $L^{p}_{\eta}$ consists of those measurable functions such that $||g||_{p,\eta}<\infty$ (for $\eta=1$, we write as usual $L^{p}$ and $\|\:\:\|_{p}$). The number $q$ always stands for $p^{-1}+q^{-1}=1$ ($p\in[1,\infty]$). Of course $(L^{p}_{\eta})'=L^{q}_{\eta^{-1}}$ if $1< p<\infty$ and $(L^{1}_{\eta})'=L^{\infty}_{\eta}$. The spaces $E=L^{p}_{\eta}$ are clearly translation-invariant spaces of tempered distributions for $p\in[1,\infty)$. The case $p=\infty$ is an exception, because $\mathcal{D}(\mathbb{R}^{n})$ fails to be dense in $L^{\infty}_{\eta}$. In view of Proposition \ref{thgoodspace}, the space $E'_{\ast}$ corresponding to $E=L^{p}_{\eta^{-1}}$ is $E'_{\ast}=E'=L^{q}_{\eta}$ whenever $1<p<\infty$. On the other hand, Proposition \ref{prop3.11} gives that $E'_{\ast}=UC_{\eta}$ for  $E=L^{1}_{\eta}$, where $UC_{\eta}$ is defined as in (\ref{UC}) with $\omega$ replaced by $\eta$.

We can easily determine the Beurling algebra of $L^{p}_{\eta}$.

\begin{proposition} \label{weight prop}Let
$$
\omega_{\eta}(h):= \operatorname*{ess}\sup_{x\in\mathbb{R}^{n}} \frac{\eta(x+h)}{\eta(x)}.
$$
Then
$$||T_{-h}||_{L(L^{p}_{\eta})}=
\begin{cases}
\omega_{\eta}(h) & \mbox{ if } p\in[1,\infty),
\\ \omega_{\eta}(-h)
  & \mbox{ if } p=\infty.
\end{cases}
$$
Consequently, the Beurling algebra associated to $L_{\eta}^{p}$ is $L_{\omega_{\eta}}^{1}$ if $p=[1,\infty)$ and $L^{1}_{\check{\omega}_{\eta}}$ if $p=\infty$.
\end{proposition}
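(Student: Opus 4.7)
The plan is to compute $\|T_{-h}\|_{L(L^{p}_{\eta})}$ directly by a change of variables and then identify the associated Beurling algebra via Definition \ref{def:1}. The key observation is that translation by $-h$ on $L^{p}_{\eta}$ essentially amounts to replacing $\eta$ by $T_{h}\eta$ in the weight, and $\omega_\eta(h)$ is precisely the essential supremum of the pointwise ratio $(T_{h}\eta)/\eta$.

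I would split the argument by $p$. For $p\in[1,\infty)$, substituting $y=x-h$ gives
\begin{equation*}
\|T_{-h}g\|_{p,\eta}^{p}=\int_{\mathbb{R}^{n}}|g(y)|^{p}\,\eta(y+h)^{p}\,dy.
\end{equation*}
Since $\eta(y+h)\le \omega_\eta(h)\,\eta(y)$ for a.e.\ $y$ by definition of $\omega_\eta(h)$, the upper bound $\|T_{-h}\|_{L(L^{p}_{\eta})}\le \omega_\eta(h)$ is immediate. For the reverse inequality, fix $\varepsilon>0$ and use the definition of essential supremum to choose a measurable set $B$ of finite positive Lebesgue measure contained in $\{y:\eta(y+h)/\eta(y)>\omega_\eta(h)-\varepsilon\}$. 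Testing with $g=\chi_{B}/\eta\in L^{p}_{\eta}$ (measurable, since $\eta>0$) yields $\|g\|_{p,\eta}^{p}=|B|$ and
\begin{equation*}
\|T_{-h}g\|_{p,\eta}^{p}=\int_{B}\frac{\eta(y+h)^{p}}{\eta(y)^{p}}\,dy\ge (\omega_\eta(h)-\varepsilon)^{p}\,\|g\|_{p,\eta}^{p},
\end{equation*}
which gives the matching lower bound after letting $\varepsilon\to 0^{+}$.

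For $p=\infty$, the same substitution shows
\begin{equation*}
\|T_{-h}g\|_{\infty,\eta}=\operatorname*{ess}\sup_{y}\frac{|g(y)|}{\eta(y+h)}=\operatorname*{ess}\sup_{y}\frac{|g(y)|}{\eta(y)}\cdot\frac{\eta(y)}{\eta(y+h)}.
\end{equation*}
The pointwise bound $\eta(y)/\eta(y+h)\le \omega_\eta(-h)$ a.e., which follows by setting $x=y+h$ in the definition of $\omega_\eta(-h)$, provides the upper estimate $\|T_{-h}\|_{L(L^{\infty}_{\eta})}\le \omega_\eta(-h)$. For the reverse, repeat the selection trick: pick $B$ of positive measure where $\eta(y)/\eta(y+h)>\omega_\eta(-h)-\varepsilon$, and test with $g=\eta\,\chi_{B}$, which satisfies $\|g\|_{\infty,\eta}=1$ and $\|T_{-h}g\|_{\infty,\eta}\ge \omega_\eta(-h)-\varepsilon$.

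Finally, the identification of the Beurling algebra is a direct application of Definition \ref{def:1}: the growth function of the translation group of $L^{p}_{\eta}$ is $\omega(h)=\|T_{-h}\|_{L(L^{p}_{\eta})}$, which equals $\omega_\eta(h)$ for $p\in[1,\infty)$ and $\check{\omega}_\eta(h)=\omega_\eta(-h)$ for $p=\infty$, so the associated Beurling algebras are $L^{1}_{\omega_\eta}$ and $L^{1}_{\check{\omega}_\eta}$ respectively. I expect the only delicate point to be the measurability and selection of the extremal sets $B$, which is a routine consequence of the definition of essential supremum; apart from that, the entire argument is an exercise in change of variables combined with the sharpness of $\omega_\eta$.
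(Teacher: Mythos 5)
Your proof is correct and follows essentially the same route as the paper: the upper bound is immediate from the definition of $\omega_{\eta}$, and the lower bound is obtained by testing on a positive-measure set where the ratio $\eta(\cdot+h)/\eta(\cdot)$ is within $\varepsilon$ of its essential supremum (the paper takes the characteristic function of a compact subset of that set, you take $\chi_{B}/\eta$ --- an immaterial difference). The only divergence is the case $p=\infty$, which the paper dispatches by duality with $L^{1}_{\eta}$ while you redo the substitution directly; both arguments are fine.
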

\begin{proof} Assume first that $1\leq p<\infty$.
Clearly, $\omega_{\eta}(h)\geq ||T_{-h}||_{L(L^{p}_{\eta})}.$ Let $\varepsilon>0$ and set
$$A=\left\{x\in\mathbb{R}^{n}:\: \omega_{\eta}(h)-\varepsilon\leq \eta(x+h)/\eta(x) \right\}.$$
The Lebesgue measure of $A$ is positive. Find a compact subset $K\subset A$ with positive Lebesgue measure and let $g$ be the characteristic function of $K$. Then
$$\int_{\mathbb{R}^{n}}|g(x)|^{p}\eta^{p} (x+h)dx\geq (\omega_{\eta}(h)-\varepsilon)^{p}||g||^{p}_{p,\eta},$$
which yields $||T_{-h}||_{L(L^{p}_{\eta})}(h)\geq (\omega_{\eta}(h)-\varepsilon).$ Since $\varepsilon$ is arbitrary, we obtain $\omega_{\eta}(h)=||T_{-h}||_{L(L^{p}_{\eta})}$. The case $p=\infty$ follows by duality.
\end{proof}

We remark that when the logarithm of $\eta$ is a positive measurable subadditive function and $\eta(0)=1$, one easily obtains from Proposition \ref{weight prop} that $\omega_{\eta}=\eta$.

Consider now the spaces $\mathcal{D}_{L^{p}_{\eta}}$ for $p\in[1,\infty]$, defined as in Subsection \ref{test space} by taking $E=L^{p}_{\eta}$. Once again, the case $p=\infty$ is an exception because $\mathcal{D}(\mathbb{R}^{n})$ is not dense  $\mathcal{D}_{L^{\infty}_{\eta}}$. In analogy to Schwartz' notation \cite{S}, we write $\mathcal{B}_{\eta}:=\mathcal{D}_{L^{\infty}_{\eta}}$. Set further $\dot{\mathcal{B}}_{\eta}$ for the closure of $\mathcal{D}(\mathbb{R}^{n})$ in $\mathcal{B}_{\eta}$. We immediately see that $\dot{\mathcal{B}}_{\eta}=\mathcal{D}_{C_{\eta}}$, where $C_{\eta}=\left\{g\in C(\mathbb{R}^{n}): \: \lim_{|x|\to \infty} g(x)/\eta(x)=0\right\}\subset L^{\infty}_{\eta}.$ Observe that the space $E'_{\ast}$ for $E=C_{\eta}$ is $E'_{\ast}=L^{1}_{\eta}$. By Proposition \ref{smooth prop}, $\mathcal{D}_{L^{p}_{\eta}}\subset \dot{\mathcal{B}}_{\check{\omega}_{\eta}}$ for $p\in (1,\infty)$; using the parametrix formula (\ref{eq:6}), one also deduces that $\mathcal{D}_{L^{1}_{\eta}}\subset \dot{\mathcal{B}}_{\check{\omega}_{\eta}}$. Actually, the estimate (\ref{bounds}) gives $\mathcal{D}_{L^{p}_{\eta}}\hookrightarrow \dot{\mathcal{B}}_{\check{\omega}_{\eta}}$ for every $p\in[1,\infty)$.
It follows from Proposition \ref{prop:reflexive} that  $\mathcal{D}_{ L^p_\eta}$ is an FS$^{\ast}$-space and hence reflexive when $p\in(1,\infty)$.

In accordance to Subsection \ref{subsection DE}, the weighted spaces $\mathcal{D}_{L_{\eta}^p}'$ are defined as $\mathcal{D}_{L_{\eta}^p}'= (\mathcal{D}_{L_{\eta^{-1}}^q})'$ where $p^{-1}+q^{-1}=1$ if $p\in (1,\infty)$; if $p=1$ or $p=\infty$, we have $\mathcal{D}_{L_{\eta}^1}'=(\mathcal{D}_{C_{\eta}})'=(\dot{\mathcal{B}}_\eta)'$ and $\mathcal{D}'_{UC_{\eta}}=(\mathcal{D}_{L^{1}_{\eta}})'$. We write $\mathcal{B}'_{\eta}:=\mathcal{D}'_{L^{\infty}_{\eta}}:=\mathcal{D}'_{UC_{\eta}}$ and $\dot{\mathcal{B}}'_{\eta}$ for the closure of $\mathcal{D}(\mathbb{R}^{n})$ in $\mathcal{B}'_{\eta}$. We call $\mathcal{B}'_{\eta}$ the space of $\eta$-bounded distributions. Observe that the $\mathcal{D}'_{L^{p}_{\eta}}$ are DFS$^{\ast}$ spaces and $(\mathcal{D}'_{L^{p}_{\eta}})'=
\mathcal{D}_{L^{q}_{\eta^{-1}}}$ when $1<p<\infty$. Theorem \ref{karak} gives that $\mathcal{S}(\mathbb{R}^{n})$ is dense in $\mathcal{D}'_{L^{1}_{\eta}}$ and Corollaries \ref{cor:bounded} and \ref{cor:sequence} imply that $(\mathcal{D}'_{L^{1}_{\eta}})'=\mathcal{B}_{\eta}$. Using the parametrix method, one deduces as in the proof of Theorem \ref{karak}, that every element of $\dot{\mathcal{B}}'_{\eta}$ is the sum of partial derivatives of elements of $C_{\eta}$ and that $f\in \dot{\mathcal{B}}'_{\eta}$ if and only if $f\ast \psi\in C_{\eta}$; likewise  analogs to Corollaries \ref{cor:bounded} and \ref{cor:sequence} hold for  $\dot{\mathcal{B}}'_{\eta}$. The later implies that $(\dot{\mathcal{B}}'_{\eta})'=\mathcal{D}_{L^{1}_{\eta}}$. Employing Theorem \ref{karak}, Corollary \ref{cor:bounded} and Corollary \ref{cor:sequence},  one sees that $\mathcal{D}'_{L^{p}_{\eta}}\subset \dot{\mathcal{B}}'_{\check{\omega}_{\eta}}$, $1\leq p<\infty$, and that the inclusion is sequentially continuous. Summarizing, we have the embeddings
$\mathcal{D}_{L^{1}_{\omega_{\eta}}}\hookrightarrow\mathcal{D}_{L^{p}_{\eta}}\hookrightarrow \dot{\mathcal{B}}_{\check{\omega}_{\eta}}$ and $\mathcal{D}'_{L^{1}_{\omega_{\eta}}}\hookrightarrow\mathcal{D}'_{L^{p}_{\eta}}\hookrightarrow \dot{\mathcal{B}}'_{\check{\omega}_{\eta}}$ for $1\leq p <\infty,$ and $\dot{\mathcal{B}}_{{\eta}}\hookrightarrow \dot{\mathcal{B}}_{\omega_{\eta}}$ and $\dot{\mathcal{B}}'_{{\eta}}\hookrightarrow \dot{\mathcal{B}}'_{\omega_{\eta}}.$

The multiplicative product mappings $\cdot:\mathcal{D}'_{L^{p}_{\eta}}\times
\mathcal{B}_{\eta}\to \mathcal{D}'_{L^{p}}$ and $\cdot:\mathcal{B}'_{\eta}\times
\mathcal{D}_{L^{p}_{\eta}}\to \mathcal{D}'_{L^{p}}$ are well-defined and hypocontinuous for $1\leq p<\infty$. In particular, $f\varphi$ is an integrable distribution in Schwartz' sense \cite{S} whenever $f\in\mathcal{B}'_{\eta}$ and $\varphi\in\mathcal{D}_{L^{1}_{\eta}}$ or $f\in\mathcal{D}'_{L^{1}_{\eta}}$ and $\varphi\in\mathcal{B}_{\eta}$. If $(1/r)=(1/p_{1})+(1/p_{2})$ with $1\leq r, p_1,p_2<\infty$, it is also clear that the multiplicative product $\cdot:\mathcal{D}'_{L^{p_1}_{\eta_{1}}}\times
\mathcal{D}_{L_{\eta_{2}}^{p_2}}\to \mathcal{D}'_{L^{r}_{\eta_{1}\eta_{2}}}$ is
hypocontinuous. Clearly, the convolution product can always be canonically defined as a bilinear mapping in the following situations, $\ast:\mathcal{D}'_{L^{p}_{\eta}}\times
\mathcal{D}'_{L_{\omega}^{1}}\to \mathcal{D}'_{L^{p}_{\eta}}$, $1\leq p< \infty$, $\ast:\mathcal{B}'_{\eta}\times
\mathcal{D}'_{L_{\check{\omega}_{\eta}}^{1}}\to \mathcal{B}'_{\eta}$, and $\ast:\dot{\mathcal{B}}'_{\eta}\times
\mathcal{D}'_{L_{\check{\omega}_{\eta}}^{1}}\to \dot{\mathcal{B}}'_{\eta}$. Furthermore, it follows from Remark \ref{wrk2} below and \cite[Th\'{e}or. XXVI, p. 203]{S} that these three convolution mappings are continuous. (The continuity also follows from their hypocontinuity and \cite[p. 160]{kotheII} or \cite[Prop. 1.4.3, p. 19]{O}).

\section{Relation between $\mathcal{D}'_{E'_{\ast}}$, $\mathcal{B}'_{\omega}$, and $\mathcal{D}'_{L^{1}_{\check{\omega}}}$ -- Convolution and multiplication}
\label{subsect convolution}

Many of the properties of the spaces $\mathcal{D}_{L^{p}_{\eta}}$ and $\mathcal{D}'_{L^{p}_{\eta}}$ extend to $\mathcal{D}_{E}$ and $\mathcal{D}'_{E'_{\ast}}$ for the general translation-invariant Banach space of tempered distributions $E$ with Beurling algebra $L^{1}_{\omega}$. The next theorem summarizes some of our previous results.
\begin{theorem}\label{th4.13} We have
$\mathcal{D}_{L^{1}_{\omega}}\hookrightarrow\mathcal{D}_{E}\hookrightarrow\dot{\mathcal{B}}_{\check{\omega}}$ and hence the continuous inclusions $\mathcal{D}'_{L^{1}_{\check{\omega}}}\rightarrow\mathcal{D}'_{E'_{\ast}}\rightarrow{\mathcal{B}}'_{\omega}$. When $E$ is reflexive $\mathcal{D}'_{L^{1}_{\check{\omega}}}\hookrightarrow\mathcal{D}'_{E'}\hookrightarrow\dot{\mathcal{B}}'_{\omega}$.
\end{theorem}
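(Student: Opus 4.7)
The plan is to first establish the two test function embeddings and then dualize, identifying the resulting dual spaces by means of the concrete examples developed in Section \ref{examples}.

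For the inclusion $\mathcal{D}_{L^{1}_{\omega}}\hookrightarrow \mathcal{D}_{E}$, the key tool is inequality (\ref{D omega inequality}) from Remark \ref{remark inequality}, namely $\|\varphi\|_{E,N}\leq M'_{l}\|\varphi\|_{1,\omega,N+2l}$ for $\varphi\in\mathcal{S}(\mathbb{R}^{n})$. Since $\mathcal{S}(\mathbb{R}^{n})$ is dense in $\mathcal{D}_{L^{1}_{\omega}}$ by Proposition \ref{prop:ime3} applied to $E=L^{1}_{\omega}$, this bound extends by continuity to $\mathcal{D}_{L^{1}_{\omega}}$, yielding the continuous embedding; density in $\mathcal{D}_{E}$ follows because $\mathcal{S}(\mathbb{R}^{n})\subset\mathcal{D}_{L^{1}_{\omega}}$ is already dense in $\mathcal{D}_{E}$. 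For $\mathcal{D}_{E}\hookrightarrow\dot{\mathcal{B}}_{\check{\omega}}$, the estimate (\ref{bounds}) of Proposition \ref{smooth prop} provides continuous control of the $\check{\omega}$-weighted $C^{N}$-norms, while the decay (\ref{eqgrowth}) gives $\varphi^{(\alpha)}\in C_{\check{\omega}}$ for every multi-index $\alpha$. Hence $\varphi\in\mathcal{D}_{C_{\check{\omega}}}=\dot{\mathcal{B}}_{\check{\omega}}$, with continuous dependence; density again follows from $\mathcal{S}(\mathbb{R}^{n})$ being dense in both Fr\'echet spaces.

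Transposing these dense continuous embeddings of Fr\'echet spaces produces continuous linear maps $(\dot{\mathcal{B}}_{\check{\omega}})'\to \mathcal{D}'_{E'_{\ast}}\to (\mathcal{D}_{L^{1}_{\omega}})'$ for the strong dual topologies, and these maps are injective since all three spaces embed canonically into $\mathcal{S}'(\mathbb{R}^{n})$. The duals are then identified via Section \ref{examples}. From $\dot{\mathcal{B}}_{\check{\omega}}=\mathcal{D}_{C_{\check{\omega}}}$ together with the example $(C_{\eta})'_{\ast}=L^{1}_{\eta}$ one obtains $(\dot{\mathcal{B}}_{\check{\omega}})'=\mathcal{D}'_{L^{1}_{\check{\omega}}}$; from $(L^{1}_{\eta})'_{\ast}=UC_{\eta}$ (which is Proposition \ref{prop3.11} applied to $L^{1}_{\eta}$) together with the notation $\mathcal{B}'_{\omega}=\mathcal{D}'_{UC_{\omega}}$ one obtains $(\mathcal{D}_{L^{1}_{\omega}})'=\mathcal{B}'_{\omega}$. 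This settles the general (non-reflexive) assertion.

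For the reflexive case one must strengthen both endpoint statements to dense embeddings. Density of $\mathcal{D}'_{L^{1}_{\check{\omega}}}$ in $\mathcal{D}'_{E'}$ is immediate: by Proposition \ref{prop:reflexive}, $\mathcal{S}(\mathbb{R}^{n})$ is dense in $\mathcal{D}'_{E'}$, and $\mathcal{S}(\mathbb{R}^{n})\subset \mathcal{D}'_{L^{1}_{\check{\omega}}}$. The genuinely nontrivial step, which I expect to be the main obstacle, is showing that under $\mathcal{D}'_{E'}\to\mathcal{B}'_{\omega}$ the image lands inside the proper closed subspace $\dot{\mathcal{B}}'_{\omega}$. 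For this I would invoke the reflexive version of Theorem \ref{karak}: any $f\in\mathcal{D}'_{E'}$ can be written as $f=\sum_{|\alpha|\leq N}f_{\alpha}^{(\alpha)}$ with $f_{\alpha}\in E'\cap C_{\omega}$. For $\psi\in\mathcal{D}(\mathbb{R}^{n})$ then $f\ast\psi=\sum_{|\alpha|\leq N}f_{\alpha}\ast\psi^{(\alpha)}$, and since $\psi^{(\alpha)}\in\mathcal{D}(\mathbb{R}^{n})\subset\check{E}$ (via $\mathcal{D}(\mathbb{R}^{n})\hookrightarrow E$ and reflection invariance of $\mathcal{D}(\mathbb{R}^{n})$), the reflexive conclusion of Proposition \ref{convolution E E'}, $E'\ast\check{E}\subset C_{\omega}$, forces each summand to belong to $C_{\omega}$; hence $f\ast\psi\in C_{\omega}$ for all $\psi\in\mathcal{D}(\mathbb{R}^{n})$. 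The characterization developed at the end of Section \ref{examples} ($f\in\dot{\mathcal{B}}'_{\omega}$ if and only if $f\ast\psi\in C_{\omega}$ for every $\psi\in\mathcal{D}(\mathbb{R}^{n})$) then yields $f\in\dot{\mathcal{B}}'_{\omega}$. Finally, density of $\mathcal{D}'_{E'}$ in $\dot{\mathcal{B}}'_{\omega}$ is automatic because the image contains $\mathcal{D}(\mathbb{R}^{n})$, which is dense in $\dot{\mathcal{B}}'_{\omega}$ by definition.
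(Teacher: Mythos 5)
Your proof is correct and follows essentially the same route as the paper's: inequality (\ref{D omega inequality}) together with the density of $\mathcal{S}(\mathbb{R}^{n})$ in $\mathcal{D}_{L^{1}_{\omega}}$ for the first embedding, the bounds (\ref{bounds}) for the second, transposition for the dual statements, and Theorem \ref{karak} combined with the Section \ref{examples} description of $\dot{\mathcal{B}}'_{\omega}$ in the reflexive case. Your verification that $\mathcal{D}'_{E'}\subseteq\dot{\mathcal{B}}'_{\omega}$ via the convolution characterization $f\ast\psi\in C_{\omega}$ is merely a more explicit unpacking of the paper's one-line appeal to Theorem \ref{karak}.
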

\begin{proof}
Notice that Proposition \ref{smooth prop} gives the inclusions $\mathcal{D}_{E}\subseteq \dot{\mathcal{B}}_{\check{\omega}}$. We actually have $\mathcal{D}_{E}\hookrightarrow\dot{\mathcal{B}}_{\check{\omega}}$ because of (\ref{bounds}). The dense embedding $\mathcal{S}(\mathbb{R}^{n})\hookrightarrow \mathcal{D}_{L^{1}_{\omega}}$ and the inequality (\ref{D omega inequality}) from Remark \ref{remark inequality} show that $\mathcal{D}_{L^{1}_{\omega}}\subseteq \mathcal{D}_{E}$ and that (\ref{D omega inequality}) remains true for all $\varphi\in\mathcal{D}_{L^{1}_{\omega}}$. Consequently, $\mathcal{D}_{L^{1}_{\omega}}\hookrightarrow\mathcal{D}_{E}$. By transposition of the latter two dense inclusion mappings, $\mathcal{D}'_{L^{1}_{\check{\omega}}}\rightarrow\mathcal{D}'_{E'_{\ast}}\rightarrow{\mathcal{B}}'_{\omega}$. In the reflexive case, Theorem \ref{karak} gives $\mathcal{D}'_{E'}\subseteq\dot{\mathcal{B}}'_{\omega}$ and therefore $\mathcal{D}'_{L^{1}_{\check{\omega}}}\hookrightarrow\mathcal{D}'_{E'}\hookrightarrow\dot{\mathcal{B}}'_{\omega}$.

\end{proof}

We can now define multiplication and convolution operations on $\mathcal{D}_{E'_{\ast}}'$.
\begin{proposition}\label{conv-prod} The multiplicative products
$
\cdot:\mathcal{D}'_{E'_{\ast}}\times\mathcal{D}_{L^{1}_{\omega}}\to \mathcal{D}'_{L^{1}}
$
and
$
\cdot:\mathcal{D}'_{L^{1}_{\check{\omega}}}\times\mathcal{D}_{E}\to \mathcal{D}'_{L^{1}}
$ are hypocontinuous.
The convolution products are continuous in the following two cases:
$
\ast:\mathcal{D}'_{E'_{\ast}}\times\mathcal{D}'_{L^{1}_{\check{\omega}}}\to \mathcal{D}'_{E'_{\ast}}
$
and
$
\ast:\mathcal{D}'_{E'_{\ast}}\times\mathcal{O}'_{C}(\mathbb{R}^{n})\to \mathcal{D}'_{E'_{\ast}}
$. The convolution
$
\ast:\mathcal{D}'_{E'_{\ast}}\times\mathcal{D}_{\check{E}}\to \mathcal{B}_{\omega}
$ is hypocontinuous; when the space $E$ is reflexive, we have $
\ast:\mathcal{D}'_{E'}\times\mathcal{D}_{\check{E}}\to \dot{\mathcal{B}}_{\omega}
$.
\end{proposition}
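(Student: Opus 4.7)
My plan is to dispatch the six mappings by combining the structural Theorem \ref{karak} with the Banach module identities, the convolution-decay estimate Proposition \ref{convolution E E'}, and the uniform bounds on bounded sets provided by Corollary \ref{cor:bounded}.

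For the two multiplicative products I would start with $f\in\mathcal{D}'_{E'_\ast}$ and $\varphi\in\mathcal{D}_{L^1_\omega}$ and apply Theorem \ref{karak}$(v)$ to write $f=\sum_{|\alpha|\leq N}f_\alpha^{(\alpha)}$ with $f_\alpha\in E'_\ast\cap UC_\omega$. The inverted Leibniz identity
\[
f_\alpha^{(\alpha)}\cdot\varphi=\sum_{\gamma\leq\alpha}(-1)^{|\gamma|}\binom{\alpha}{\gamma}\bigl(f_\alpha\cdot\varphi^{(\gamma)}\bigr)^{(\alpha-\gamma)}
\]
then expresses $f\cdot\varphi$ as a finite linear combination of derivatives of products $f_\alpha\cdot\varphi^{(\gamma)}$. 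Because $|f_\alpha(x)|\leq C_\alpha\omega(x)$ (from $f_\alpha\in UC_\omega$) and $\varphi^{(\gamma)}\in L^1_\omega$, each such product lies in $L^1(\mathbb{R}^n)$, so $f\cdot\varphi\in\mathcal{D}'_{L^1}$. The second product is symmetric: Proposition \ref{smooth prop} gives $|\varphi^{(\gamma)}(x)|\leq C_\gamma\check\omega(x)$ for $\varphi\in\mathcal{D}_E$, so derivatives pair integrably against $L^1_{\check\omega}$-factors. Hypocontinuity in both cases follows at once from the uniform bounds on bounded sets furnished by Corollary \ref{cor:bounded}.

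For the convolution $\ast\colon\mathcal{D}'_{E'_\ast}\times\mathcal{D}'_{L^1_{\check\omega}}\to\mathcal{D}'_{E'_\ast}$ I would define $f\ast u$ by the duality $\langle f\ast u,\varphi\rangle:=\langle f,\check u\ast\varphi\rangle$ for $\varphi\in\mathcal{D}_E$; well-definedness requires $\check u\ast\varphi\in\mathcal{D}_E$, which follows by writing $\check u=\sum u_\beta^{(\beta)}$ with $u_\beta\in L^1_\omega$ and applying the $L^1_\omega$-Fr\'echet module structure on $\mathcal{D}_E$ from Proposition \ref{prop:ime3} termwise to $\sum u_\beta\ast\varphi^{(\beta)}$. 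To identify $f\ast u$ as an element of $\mathcal{D}'_{E'_\ast}$ I use characterization (ii) of Theorem \ref{karak}: for $\psi\in\mathcal{D}(\mathbb{R}^n)$ one has $u\ast\psi\in\mathcal{D}_{L^1_{\check\omega}}$ (since $L^1_{\check\omega}$ is a Beurling algebra, closed under convolution with $\mathcal{D}$), and with $f=\sum f_\alpha^{(\alpha)}$, $f_\alpha\in E'_\ast$, associativity yields $(f\ast u)\ast\psi=\sum_\alpha f_\alpha\ast(u\ast\psi)^{(\alpha)}\in E'_\ast$ via the module inclusion $L^1_{\check\omega}\ast E'_\ast\subseteq E'_\ast$ supplied by Proposition \ref{factorization}. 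The convolution with $\mathcal{O}'_C(\mathbb{R}^n)$ reduces to this case through the continuous embedding $\mathcal{O}'_C\hookrightarrow\mathcal{D}'_{L^1_{\check\omega}}$, immediate from the structure theorem for $\mathcal{O}'_C$ (rapidly decreasing continuous functions lie in $L^1_{\check\omega}$ because $\check\omega$ is polynomially bounded). Hypocontinuity of both convolutions follows from Corollary \ref{cor:bounded}, and full continuity is then obtained by the standard (DF)-space argument alluded to at the end of Section \ref{examples}.

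Finally, for $\ast\colon\mathcal{D}'_{E'_\ast}\times\mathcal{D}_{\check E}\to\mathcal{B}_\omega$, writing $f=\sum f_\alpha^{(\alpha)}$ with $f_\alpha\in E'$ I compute
\[
(f\ast\varphi)^{(\gamma)}=\sum_\alpha f_\alpha\ast\varphi^{(\alpha+\gamma)},
\]
where $\varphi^{(\alpha+\gamma)}\in\check E$; Proposition \ref{convolution E E'} then places each summand in $UC_\omega$ with $\|f_\alpha\ast\varphi^{(\alpha+\gamma)}\|_{\infty,\omega}\leq\|f_\alpha\|_{E'}\|\varphi^{(\alpha+\gamma)}\|_{\check E}$, so every derivative of $f\ast\varphi$ is pointwise bounded by a multiple of $\omega$, giving $f\ast\varphi\in\mathcal{B}_\omega$; hypocontinuity follows once more from Corollary \ref{cor:bounded}. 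The reflexive case promotes $UC_\omega$ to $C_\omega$ by the same proposition, placing the target in $\dot{\mathcal{B}}_\omega$. The main obstacle I anticipate is the verification that $f\ast u$ in the first convolution is independent of the structural decomposition chosen, which I expect to reduce to the duality identity combined with the density $\mathcal{S}(\mathbb{R}^n)\hookrightarrow\mathcal{D}_E$, and the upgrade from hypocontinuity to joint continuity for the two convolutions where continuity is claimed, which requires the (DF)-type locally convex machinery for strong duals of the Fr\'echet spaces $\mathcal{D}_E$ and $\mathcal{D}_{L^1_\omega}$.
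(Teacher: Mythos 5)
Your proposal is correct, but it travels a more computational road than the paper, which is essentially a reduction to the weighted $L^p$ machinery of Section \ref{examples}. For the multiplicative products the paper does not touch Leibniz at all: it composes the continuous inclusion $\mathcal{D}'_{E'_{\ast}}\times\mathcal{D}_{L^{1}_{\omega}}\to \mathcal{B}'_{\omega}\times\mathcal{D}_{L^{1}_{\omega}}$ coming from Theorem \ref{th4.13} with the hypocontinuous product $\cdot:\mathcal{B}'_{\omega}\times\mathcal{D}_{L^{1}_{\omega}}\to \mathcal{D}'_{L^{1}}$ already recorded for the weighted spaces; your inverted Leibniz expansion of $f_\alpha^{(\alpha)}\cdot\varphi$ is in effect a proof of that Section \ref{examples} claim, so you are re-deriving the special case rather than quoting it. For the first convolution the two arguments coincide in substance (transposition of the hypocontinuous module action $\ast:\mathcal{D}'_{L^{1}_{\omega}}\times\mathcal{D}_{E}\to\mathcal{D}_{E}$, then Corollary \ref{cor:bounded} and the DF-space upgrade to joint continuity), and the $\mathcal{O}'_{C}$ case is handled identically via the embedding into the weighted $\mathcal{D}'_{L^{1}}$ space. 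For the last convolution the paper argues by duality, using $\mathcal{B}_{\omega}=(\mathcal{D}'_{L^{1}_{\omega}})'$ together with the hypocontinuity of the two convolutions already established, whereas you compute $(f\ast\varphi)^{(\gamma)}=\sum_\alpha f_\alpha\ast\varphi^{(\alpha+\gamma)}$ and invoke Proposition \ref{convolution E E'} termwise; your version has the advantage of making the $UC_\omega$ (resp.\ $C_\omega$) membership of each derivative explicit, which is what actually lands the image in $\mathcal{B}_\omega$ (resp.\ $\dot{\mathcal{B}}_{\omega}$). The one place where you are glib in the same way the paper is: Corollary \ref{cor:bounded} only gives equicontinuity when the \emph{first} factor runs over a bounded set; for the other half of hypocontinuity (bounded set in $\mathcal{D}_{L^{1}_{\omega}}$, resp.\ $\mathcal{D}_{\check E}$) one must pair against bounded sets of the predual and check, e.g., that $\{\varphi\phi\}$ stays bounded in $\mathcal{D}_{E}$ — a routine Leibniz estimate, but not literally contained in Corollary \ref{cor:bounded}. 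This is not a gap relative to the paper's own level of detail.
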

\begin{proof} That these bilinear mappings have the range in the stated spaces follows from Theorem \ref{karak} and Theorem \ref{th4.13}. The hypocontinuity of the multiplicative products is a consequence of Theorem \ref{th4.13}. In fact, the bilinear mapping $
\cdot:\mathcal{D}'_{E'_{\ast}}\times\mathcal{D}_{L^{1}_{\omega}}\to \mathcal{D}'_{L^{1}}
$ is hypocontinuous as the composition of the continuous inclusion mapping $\mathcal{D}'_{E'_{\ast}}\times\mathcal{D}_{L^{1}_{\omega}}\to \mathcal{B}'_{\omega}\times\mathcal{D}_{L^{1}_{\omega}}$ and the hypocontinuous mapping $
\cdot:\mathcal{B}'_{\omega}\times\mathcal{D}_{L^{1}_{\omega}}\to \mathcal{D}'_{L^{1}}
$. Likewise, $\cdot:\mathcal{D}'_{L^{1}_{\check{\omega}}}\times\mathcal{D}_{E}\to \mathcal{D}'_{L^{1}}$ is hypocontinuous.
 It is clear that $\ast:\mathcal{D}'_{L^{1}_{\omega}}\times\mathcal{D}_{E}\to \mathcal{D}_{E}$ is hypocontinuous, which, together with Corollary \ref{cor:bounded}, yields the hypocontinuity of $\ast:\mathcal{D}'_{E'_{\ast}}\times\mathcal{D}'_{L^{1}_{\check{\omega}}}\to \mathcal{D}'_{E'_{\ast}}$. Since $\mathcal{D}'_{E'_{\ast}}$ and $\mathcal{D}'_{L^{1}_{\check{\omega}}}$ are DF-spaces, it automatically follows that the bilinear mapping $\ast:\mathcal{D}'_{E'_{\ast}}\times\mathcal{D}'_{L^{1}_{\check{\omega}}}\to \mathcal{D}'_{E'_{\ast}}$ is continuous (cf. \cite[p. 160]{kotheII}). The continuity of $
\ast:\mathcal{D}'_{E'_{\ast}}\times\mathcal{O}'_{C}(\mathbb{R}^{n})\to \mathcal{D}'_{E'_{\ast}}
$ is a direct consequence of the embedding $\mathcal{O}'_{C}(\mathbb{R}^{n})\hookrightarrow \mathcal{D}'_{L^{1}_{\omega}}$. Finally, $\mathcal{B}_{\omega}=(\mathcal{D}'_{L^{1}_{\omega}})'$ and $\ast:\mathcal{D}'_{E'_{\ast}}\times\mathcal{D}'_{L^{1}_{\check{\omega}}}\to \mathcal{D}'_{E'_{\ast}}$ and $\ast:\mathcal{D}'_{L^{1}_{\omega}}\times\mathcal{D}_{E}\to \mathcal{D}_{E}$ are hypocontinuous, whence the hypocontinuity of $
\ast:\mathcal{D}'_{E'_{\ast}}\times\mathcal{D}_{\check{E}}\to \mathcal{B}_{\omega}
$ follows.
\end{proof}

It is worth pointing out that, as a consequence of Proposition \ref{conv-prod}, $f\varphi$ is an integrable distribution in Schwartz' sense \cite{S} if $f\in\mathcal{D}'_{E'_{\ast}}$ and $\varphi\in\mathcal{D}_{L^{1}_{\omega}}$ or if $f\in\mathcal{D}'_{L^{1}_{\check{\omega}}}$ and $\varphi\in\mathcal{D}_{E}$. We end this section with four remarks. In Remarks \ref{rk2} and \ref{rkBarrelledness}, two open questions are posed.

\begin{remark} Let  $(X, \{|\: \:|_{j}\}_{j\in\mathbb{N}_{0}})$ and $(Y, \{\|\: \|_{j}\}_{j\in\mathbb{N}_{0}})$ be two graded Fr\'{e}chet spaces, namely, Fr\'{e}chet spaces with fixed increasing systems of seminorms defining the topology. Recall that a continuous linear mapping $A: (X, \{|\: \:|_{j}\}_{j\in\mathbb{N}_{0}})\to (Y, \{\|\: \|_{j}\}_{j\in\mathbb{N}_{0}})$ is called tame if there are $\nu, j_{0}\in\mathbb{N}$ such that for any $ j\geq j_{0}$ there is $M_{j}>0$ such that $\| A(f)\|_{j}\leq M_{j} | f|_{\nu j}$ for all $f\in X$.

If $l$ is chosen as in the proof of Proposition \ref{smooth prop}, the inequalities (\ref{bounds}) and (\ref{D omega inequality}) actually show that
$$
(\mathcal{D}_{L^{1}_{\omega}}, \{\|\:\|_{1,\omega,N}\}_{N\in \mathbb{N}_{0}}) \hookrightarrow (\mathcal{D}_{E}, \{\|\:\|_{E,N}\}_{N\in \mathbb{N}_{0}}) \hookrightarrow (\dot{\mathcal{B}}_{\check{\omega}}, \{\|\:\|_{\infty,\check{\omega},N}\}_{N\in \mathbb{N}_{0}})
$$
are tame dense embeddings between these graded Fr\'{e}chet spaces.
With the notation used in the proof of Proposition \ref{prop:reflexive}, we obtain in particular the ``Sobolev embedding'' type results $\mathcal{D}^{2l}_{L^{1}_{\omega}} \hookrightarrow E$ and $\mathcal{D}^{2l}_{E}\hookrightarrow C_{\check{\omega}}$.
\end{remark}

\begin{remark} \label{wrk2} The spaces $\mathcal{D}_{L^{p}_{\omega}}$ (resp., $\mathcal{B}_{\omega}$ and $\dot{\mathcal{B}}_{\omega}$) are isomorphic to the Schwartz spaces $\mathcal{D}_{L^{p}}$ (resp., $\mathcal{B}$ and $\dot{\mathcal{B}}$). To construct isomorphisms, first note that the weight $\omega_{0}=\omega\ast \psi$, where $\psi\in\mathcal{D}(\mathbb{R}^{n})$ is a non-negative function, satisfies the bounds
 $ M_{1}\omega (x)\leq \omega_{0}(x)\leq M_{2} \omega (x)$, $x\in\mathbb{R}^{n}$. Furthermore, $\omega_{0}\in \mathcal{B}_{\omega}$. These two facts imply that the multiplier mapping $\varphi\to \varphi \omega_{0}$ is a Fr\'{e}chet space isomorphism from $\mathcal{D}_{L^{p}_{\omega}}$ onto $\mathcal{D}_{L^{p}}$, $1\leq p<\infty$. The same mapping provides isomorphisms $\mathcal{B}\to \mathcal{B}_{\omega}$ and $\dot{\mathcal{B}}\to \dot{\mathcal{B}}_{\omega}$.
\end{remark}
\begin{remark} \label{rk2}Schwartz has pointed out \cite[p. 200]{S} that the spaces $\mathcal{D}_{L^{p}}$ are not Montel. Remark \ref{wrk2} then yields that $\mathcal{D}_{L^{p}_{\omega}}$ are not Montel either, $1\leq p<\infty$. The spaces $\mathcal{B}_{\omega}$ and $\dot{\mathcal{B}}_{\omega}$ can never be Montel because they are not reflexive. When $\omega$ is bounded, it is easy to see that $\mathcal{D}_{E}$ is never Montel. In fact, if $\varphi\in\mathcal{D}(\mathbb{R}^{n})$ is such that $\varphi(x)=0$ for $|x|\geq 1/2$ and $\theta\in\mathbb{R}^{n}$ is a unit vector, then $\{T_{-j\theta}\varphi\}_{j=0}^{\infty}$ is a bounded sequence in $\mathcal{D}_{E}$ without any accumulation point, as follows from the continuous inclusion $\mathcal{D}_{E}\to \mathcal{B}$. In general: Can $\mathcal{D}_{E}$ be Montel?
\end{remark}

\begin{remark}
\label{rkBarrelledness} When $E$ is reflexive, the space $\mathcal{D}'_{E'}$ is barreled, as follows from Proposition \ref{prop:reflexive} because a reflexive space is barreled. In the general case: Is the space $\mathcal{D}'_{E'_{\ast}}$ barreled?
\end{remark}

\end{document}